\author{Shoji Yokura}
\address
{Department of Mathematics and Computer Science, 
Faculty of Science, 
Kagoshima University, 21-35 Korimoto 1-chome, Kagoshima 890-0065, Japan}
\email {yokura@sci.kagoshima-u.ac.jp}
\title [Correspondences and characteristic classes of singular varieties]
{Enriched categories of correspondences and \\ characteristic classes of singular varieties}
\thanks {
\quad \emph{keywords} : algebraic cobordism, cobordism bicycle, correspondence, enriched category, characteristic classes \\
\quad \emph{Mathematics Subject Classification 2010}: 14C17, 14C40, 14F99, 18D20, 19E99, 55N22, 55N95}
\numberwithin{equation}{section}
\newtheorem{thm}[equation]{Theorem}
\newtheorem{pro}[equation]{Proposition}
\newtheorem{cor}[equation]{Corollary}
\newtheorem{lem}[equation]{Lemma}
\theoremstyle{definition}
\newtheorem{defn}[equation]{Definition}
\newtheorem{rem}[equation]{Remark}
\def\alp{\alpha}
\def\be{\beta}
\def\jeden{1\hskip-3.5pt1}
\def\bigstar{\mathbf{\star}}
\def\Cal{\mathscr}
\def\ga{\gamma}
\def \bQ{\mathbb Q}
\def\op{\operatorname}
\begin{document}




\begin{abstract} For the category $\mathscr V$ of complex algebraic varieties, the Grothen-dieck group 
of the commutative monoid of the isomorphism classes of correspondences $X \xleftarrow f M \xrightarrow g Y$ with proper morphism $f$ and smooth morphism $g$ (such a correspondence is called \emph{a proper-smooth correspondence}) gives rise to an enriched category $\mathscr Corr(\mathscr V)^+_{pro-sm}$ of proper-smooth correspondences.
In this paper we extend the well-known theories of characteristic classes of singular varieties such as Baum--Fulton--MacPherson's Riemann--Roch transformation (abbr. BFM--RR) and MacPherson's Chern class transformation and so on to this enriched category $\mathscr Corr(\mathscr V)^+_{pro-sm}$. In order to deal with local complete intersection (abbr. $\ell.c.i.$) morphism instead of smooth morphism, in a similar manner we consider an enriched category $\mathscr Zigzag(\mathscr V)^+_{pro-\ell.c.i.}$ of \emph{proper-$\ell.c.i.$} zigzags and extend BFM--RR to this 
category 
$\mathscr Zigzag(\mathscr V)^+_{pro-\ell.c.i.}$. We also consider an enriched category $\mathscr M_{*,*}(\mathscr V)^+_{\otimes}$ of proper-smooth correspondences $(X \xleftarrow f M \xrightarrow g Y; E)$ equipped with complex vector bundle $E$ on $M$ (such a correspondence is called \emph{a cobordism bicycle of vector bundle}) and we extend BFM--RR to this enriched category $\mathscr M_{*,*}(\mathscr V)^+_{\otimes}$ as well.
\end{abstract}

\maketitle


\section{Introduction}\label{intro} 

The algebraic cobordism $\Omega_*(X)$ of Levine and Morel \cite{LM} is generated by \emph{cobordism cycles}. A cobordism cycle is the isomorphism class of $$(M \xrightarrow f X; L_1, L_2, \cdots, L_r)$$ where $M$ is a quasi-projective smooth variety, $f:M \to X$ is a projective morphism and $L_i$'s are line bundles over $M$. In \cite{GK2} J.-L. Gonzal\'ez and K. Karu show that the above morphism $f:M \to X$ can be replaced by a \emph{proper morphism from a smooth variety $M$}. In order to obtain a bivariant-theoretic analogue $\mathbb B\Omega(X \xrightarrow f Y)$  of the algebraic cobordism $\Omega_*(X)$ in such a way that $\mathbb B\Omega(X \xrightarrow {} pt)$ is isomorphic to the algebraic cobordism $\Omega_*(X)$, in \cite{Yokura-obt} we introduce an \emph{oriented bivariant theory} $\mathbb {OB}(X \xrightarrow f Y)$, which is a bivariant theory in the sense of Fulton--MacPherson's bivariant theory \cite{FM} (see Remark \ref{remark} below). Note that Fulton--MacPherson's bivariant theory $\mathbb B(X \xrightarrow f Y)$ satisfies that $\mathbb B(X \xrightarrow f pt)$ is a covariant functor like a homology theory and $\mathbb B(X \xrightarrow {\op{id}_X} X)$ is a contravariant functor like a cohomology theory. 

$\mathbb {OB}(X \xrightarrow f Y)$ is generated by the cobordism cycles $(M \xrightarrow h X;L_1,L_2,\cdots,L_r)$ such that
\begin{enumerate}
\item $h:M \to X$ is a proper morphism
\item the composite $f \circ h:M \to Y$ is a smooth morphism.
\end{enumerate}
When $Y=pt$ is a point, then $M$ is smooth and $h:M \to X$ is proper, thus $\mathbb {OB}(X \xrightarrow {} pt)$ is generated by the isomorphism classes of  proper morphisms from smooth $M$ to $X$, thus the same as one considered in Gonzal\'ez--Karu's construction. The two morphisms $h:M \to X$ and $f \circ h:M \to Y$ are written as:
$$X \xleftarrow h M \xrightarrow {f \circ h} Y$$
In general, for a category $\mathscr C$ and for three objects $X, Y, M \in Obj(\mathscr C)$, $X \xleftarrow f M \xrightarrow g Y$ is called a \emph{correspondence} (span or roof) from $X$ to $Y$. The above correspondence $X \xleftarrow f M \xrightarrow g Y$ with proper morphism $f$ and smooth morphism $g$ shall be called \emph{a proper-smooth correspondence} from $X$ to $Y$, abusing words. In \cite{Yokura-bicycle} we introduce a \emph{cobordism bicycle of vector bundle} $(X \xleftarrow f M \xrightarrow g Y; E)$, a proper-smooth correspondence carrying a complex vector bundle $E$ on $M$, as a generalization or a \emph{bi-variant analogue} of \emph{cobordism cycle of vector bundle} $(M \xrightarrow f X;E)$ with $f:M \to X$ a proper morphism from a smooth variety $M$ and a complex vector bundle $E$ on $M$, introduced in a recent paper by Lee--Pandharipande \cite{LeeP}. In \cite{Yokura-bicycle} we discuss bivariant-theoretic properties and aspects of cobordism bicycles of complex vector bundles, but in this paper we will not treat such bivariant-theoretic aspects, instead we consider characteristic classes of singular varieties on proper-smooth correspondences and also on cobordism bicycles of complex vector bundles. 

A proper-smooth correspondence $X \xleftarrow f M \xrightarrow g Y$ can be considered as a morphism from $X$ to $Y$ as follows. Let $\op{Corr}(X,Y)_{pro-sm}$ be the set of all proper-smooth correspondences from $X$ to $Y$. Then the composition
$$\circ :\op{Corr}(X,Y)_{pro-sm}\times \op{Corr}(Y,Z)_{pro-sm} \to \op{Corr}(X,Z)_{pro-sm}$$ defined by
$$(X \xleftarrow f M \xrightarrow g Y) \circ (Y \xleftarrow h N \xrightarrow k Z): = X \xleftarrow {f \circ \widetilde{h}} M \times_Y N \xrightarrow {k \circ \widetilde {g}} Z$$
is well-defined because the pullback $\widetilde h$ of a proper morphism $h$ is proper and the pullback $\widetilde g$ of  a smooth morphism $g$ is smooth, and the composite of two proper morphisms is proper and the composite of smooth morphisms is smooth. Here we consider the following commutative diagram and the middle square is the fiber product:
\begin{equation}\label{product}
\xymatrix{
&& M\times_Y N \ar [dl]_{\widetilde{h}} \ar[dr]^{\widetilde{g}} &&\\
& M \ar [dl]_{f} \ar [dr]^{g} && N \ar [dl]_{h} \ar[dr]^{k}\\
X & &  Y && Z }
\end{equation}

Then from the category $\mathscr V$ of complex algebraic varieties we get the following category $\op{Corr}(\mathscr V)_{pro-sm}$ of proper-smooth correspondences:
\begin{itemize}
\item $Obj(\op{Corr}(\mathscr V)_{pro-sm})=Obj(\mathscr V)$,
\item For two objects $X$ and $Y$, $hom_{\op{Corr}(\mathscr V)_{pro-sm}}(X,Y) = \op{Corr}(X,Y)_{pro-sm}$
\end{itemize}
\begin{rem} For a recent higher-categorical study of correspondences (in derived algebraic geometry), see Gaitsgory--Rozenblyum's book \cite{GR1} (cf. \cite{GR2}), in particular Chapter 5 - Chapter 9.
\end{rem}

On the category $\mathscr V$ of complex algebraic varieties, let us consider
Baum--Fulton--MacPherson's Todd class transformation (or Riemann--Roch transformation) \cite{BFM} $td_*^{\op{BFM}}:G_0(-) \to H_*(-)\otimes \mathbb Q$, which is a \emph{unique} natural transformation from the covariant functor $G_0(-)$ of Grothendieck groups of coherent sheaves to the covariant Borel--Moore homology theory $H_*(-)\otimes \mathbb Q$ with rational coefficients, satisfying the ``smooth condition'' that if $X$ is smooth, then the value $td_*^{\op{BFM}}(\mathcal O_X)$ of the structure sheaf $\mathcal O_X$ is equal to $td(TX) \cap [X]$, the Poincar\'e dual of the total Todd cohomology class $td(TX)$ of the tangent bundle $TX$. Here we remark that the uniqueness of the transformation $td_*^{\op{BFM}}:G_0(-) \to H_*(-)\otimes \mathbb Q$ is due to the above smooth condition. Then these two functors $G_0(-)$ and $H_*(-)\otimes \mathbb Q$ and the the natural transformation $td_*^{\op{BFM}}:G_0(-) \to H_*(-)\otimes \mathbb Q$ can be naturally extended to the category $\op{Corr}(\mathscr V)_{pro-sm}$ of proper-smooth correspondences as follows:
\begin{pro} Define the functors $\mathcal G_0:\op{Corr}(\mathscr V)_{pro-sm} \to \mathscr Ab$ and $\mathcal H^{Todd}_*:\op{Corr}(\mathscr V)_{pro-sm} \to \mathscr Ab$ as follows:
\begin{enumerate}
\item For an object $X \in Obj(\op{Corr}(\mathscr V)_{pro-sm}) = Obj(\mathscr V)$, 
$$\mathcal G_0(X) =G_0(X), \quad \mathcal H^{Todd}_*(X) :=H_*(X)\otimes \mathbb Q.$$
\item For a morphism  
$$X \xleftarrow f M \xrightarrow g Y \in hom_{\op{Corr}(\mathscr V)_{pro-sm}}(X,Y) = \op{Corr}(X,Y)_{pro-sm},$$
\begin{align*}
\mathcal G_0(X \xleftarrow f M \xrightarrow g Y) &:=f_*g^*:\mathcal G_0(Y) \to \mathcal G_0(X), \\
\mathcal H^{Todd}_*(X \xleftarrow f M \xrightarrow g Y) &:= f_* \bigl(td(T_g)\cap g^* \bigr):\mathcal H^{Todd}_*(Y)  \to \mathcal H^{Todd}_*(X) .
\end{align*}
\end{enumerate}
Then $\mathcal G_0:\op{Corr}(\mathscr V)_{pro-sm} \to \mathscr Ab$ and $\mathcal H^{Todd}_*:\op{Corr}(\mathscr V)_{pro-sm} \to \mathscr Ab$  are 
functors\footnote{Since the correspondence $X \xleftarrow f  M \xrightarrow g Y$ is considered as a morphism from $X$ to $Y$ and the homomorphism $\mathcal  F(X \xleftarrow f  M  \xrightarrow g Y):=f_*g^*:  \mathcal F(Y) \to \mathcal F(X)$ is a homomorphism from $\mathcal F(Y)$ to $\mathcal F(X)$, $\mathcal  F:\op{Corr}(\mathscr V) \to \mathscr  Ab$ should be called a contravariant functor. But, since it satisfies $\mathcal F(\alp \circ \be) = \mathcal  F(\alp)\circ \mathscr F(\be)$, we call it a functor, i.e., a covariant functor.} in the sense of $\mathcal G_0(\alp \circ \be) = \mathcal G_0(\alp) \circ \mathcal G_0(\be)$ and $\mathcal H^{Todd}_*(\alp \circ \be) = \mathcal H^{Todd}_*(\alp) \circ \mathcal H^{Todd}_*(\be)$, and Baum--Fulton--MacPherson's Todd class transformation $td_*^{\op{BFM}}: G_0(-) \to H_*(-)\otimes \mathbb Q$ is extended to the natural transformation
$$td_*^{\op{BFM}}: \mathcal  G_0(-) \to \mathcal H^{Todd}_*(-).$$
\end{pro}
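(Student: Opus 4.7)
The plan is to unpack the three assertions of the proposition into routine invocations of four standard facts: smooth base change, the projection formula, the multiplicativity of the Todd class, and Verdier's Riemann--Roch theorem (together with the classical covariance of $td_*^{\op{BFM}}$ for proper morphisms). Throughout, for composable correspondences $\alpha = (X\xleftarrow f M\xrightarrow g Y)$ and $\beta=(Y\xleftarrow h N\xrightarrow k Z)$, I use the cartesian square in the middle of the composition diagram, so that $\widetilde h$ is the pullback of $h$ along $g$ (hence proper) and $\widetilde g$ is the pullback of $g$ along $h$ (hence smooth, in particular flat).

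For functoriality of $\mathcal G_0$, I would compare $\mathcal G_0(\alpha)\circ \mathcal G_0(\beta) = f_* g^* h_* k^*$ with $\mathcal G_0(\alpha\circ\beta) = (f\circ\widetilde h)_*(k\circ\widetilde g)^* = f_*\widetilde h_*\widetilde g^* k^*$, reducing the identity to the base-change formula $g^* h_* = \widetilde h_* \widetilde g^*$ on $G_0$, which holds because the fiber square is Tor-independent (the map $g$ is flat). For functoriality of $\mathcal H^{Todd}_*$, I would begin from the short exact sequence $0\to T_{\widetilde g}\to T_{k\circ\widetilde g}\to \widetilde g^* T_k\to 0$ and the identification $T_{\widetilde g} = \widetilde h^* T_g$ coming from the cartesian square, which together yield the multiplicative identity $td(T_{k\circ\widetilde g}) = \widetilde h^* td(T_g) \cdot \widetilde g^* td(T_k)$. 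Chaining (i) smooth base change $g^* h_* = \widetilde h_*\widetilde g^*$ in Borel--Moore homology, (ii) the compatibility $\widetilde g^*(td(T_k)\cap -) = \widetilde g^* td(T_k)\cap \widetilde g^*(-)$ for flat $\widetilde g$, and (iii) the projection formula for proper $\widetilde h$, one rewrites $\mathcal H^{Todd}_*(\alpha)\circ \mathcal H^{Todd}_*(\beta)$ as $(f\circ\widetilde h)_*\bigl(td(T_{k\circ\widetilde g})\cap (k\circ\widetilde g)^*(-)\bigr)$, which is precisely $\mathcal H^{Todd}_*(\alpha\circ\beta)$.

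For the naturality of $td_*^{\op{BFM}}$, I would factor the transformation induced by $\alpha$ through the smooth pullback $g^*$ followed by the proper pushforward $f_*$. For the smooth leg, Verdier's Riemann--Roch theorem gives $td_*^{\op{BFM}}(g^*\mathcal F) = td(T_g)\cap g^* td_*^{\op{BFM}}(\mathcal F)$, and this is the standard consequence of BFM for smooth morphisms. For the proper leg, the classical BFM naturality gives $f_*\circ td_*^{\op{BFM}} = td_*^{\op{BFM}}\circ f_*$. Composing these two identities yields $\mathcal H^{Todd}_*(\alpha)\circ td_*^{\op{BFM}} = td_*^{\op{BFM}}\circ \mathcal G_0(\alpha)$, which is precisely the naturality square.

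The main obstacle here is bookkeeping rather than genuine mathematics: one must be careful to work in Borel--Moore homology (or, as the footnote in the statement anticipates, to keep Chow and Borel--Moore separate), because smooth pullback, the cap-product compatibility, and the formulation of VRR all require it. Beyond that, the entire argument is a direct repackaging of smooth base change, the projection formula, multiplicativity of Todd, and VRR into the correspondence framework, and the two functoriality identities together with the naturality square fall out term by term.
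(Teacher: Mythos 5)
Your plan matches the paper's proof step for step: the covariance of $\mathcal G_0$ reduces to flat base change on $G_0$; the covariance of $\mathcal H^{Todd}_*$ uses the multiplicativity $td(T_{k\circ\widetilde g}) = \widetilde h^* td(T_g)\cdot \widetilde g^* td(T_k)$ together with the base change identity $g^*h_* = \widetilde h_*\widetilde g^*$ and the projection formula; and the naturality of $td_*^{\op{BFM}}$ is obtained by factoring the induced map through the smooth pullback leg (handled by Verdier--Riemann--Roch) followed by the proper pushforward leg (handled by the classical BFM covariance). This is exactly the paper's argument.

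However, your final caveat has it backwards, and in a way that touches the one nontrivial hypothesis the argument rests on. You write that one must ``work in Borel--Moore homology'' and you invoke ``smooth base change $g^*h_* = \widetilde h_*\widetilde g^*$ in Borel--Moore homology'' as step (i). That identity is precisely what the paper says cannot be guaranteed in Borel--Moore homology: the paper explicitly sets $\mathcal H^{Todd}_*(X) = H_*(X)\otimes\mathbb{Q}$ with $H_*$ the \emph{Chow} homology functor, because the base change formula for proper pushforward and flat pullback holds there (Fulton, Prop.~1.7), while a remark later in the section states that with Borel--Moore homology $\mathcal H^{Todd}_*(\alpha\circ\beta)$ may genuinely differ from $\mathcal H^{Todd}_*(\alpha)\circ\mathcal H^{Todd}_*(\beta)$. (What does survive in Borel--Moore homology is only the weaker assertion that the naturality squares for $td_*^{\op{BFM}}$ compose, because the image of $td_*^{\op{BFM}}$ consists of cycle classes, on which base change does hold; this is exactly why the paper later introduces zigzags to handle Borel--Moore.) Replace ``Borel--Moore'' by ``Chow'' throughout your proposal and the argument coincides with the paper's.
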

Furthermore, the set of isomorphism classes of proper-smooth correspondences becomes an Abelian monoid by taking the disjoint union or direct sum $\sqcup$, i.e.,
$$[X \xleftarrow {f_1} M_1 \xrightarrow {g_1} Y] + [X \xleftarrow {f_2} M_2 \xrightarrow {g_2} Y] := [X \xleftarrow {f_1 \sqcup f_2} M_1 \sqcup M_2 \xrightarrow {g_1 \sqcup g_2} Y].$$
Its Grothendieck group shall be denoted by $\op{Corr}(X,Y)_{pro-sm}^+$ . Then the above ``product'' $ \circ :\op{Corr}(X,Y)_{pro-sm}\times \op{Corr}(Y,Z)_{pro-sm}\to \op{Corr}(X,Z)_{pro-sm}$ is extended to 
$$\circ :\op{Corr}(X,Y)_{pro-sm}^+ \times \op{Corr}(Y,Z)_{pro-sm}^+ \to \op{Corr}(X,Z)_{pro-sm}^+.$$
Using this we can define the following  $\mathscr Ab$-enriched (or preadditive\footnote{This cannot be replaced by ``additive" because there does not exist a zero object in the category  $\mathscr Corr(\mathscr V) _{pro-sm}^+$.}) category $\mathscr  Corr(\mathscr V) _{pro-sm}^+$  associated to such correspondences:
\begin{itemize}
\item $Obj(\mathscr  Corr(\mathscr V) _{pro-sm}^+)=Obj(\mathscr V)$.
\item For two objects $X,Y$, $hom_{\mathscr  Corr(\mathscr V) _{pro-sm}^+}(X,Y) := \op{Corr}(X,Y)_{pro-sm}^+$.
\end{itemize}
Then we have the following theorem:
\begin{thm}
We define $\mathscr G_0: \mathscr  Corr^+_{pro-sm} \to \mathscr  Ab$ and $\mathscr  H^{Todd}_*: \mathscr  Corr^+_{pro-sm} \to \mathscr  Ab$ as follows: 
\begin{enumerate}
\item For an object $X \in Obj(\mathscr  Corr^+_{pro-sm})$, 
$$\text{$\mathscr  G_0(X):= G_0(X)$ and $\mathscr  H^{Todd}_*(X):=H_*(X)\otimes \mathbb Q$,}$$
\item For a morphism $\sum_in_i[X \xleftarrow {f_i} M_i \xrightarrow {g_i} Y]  \in hom_{\mathscr  Corr_{pro-sm}^+}(X,Y) := \op{Corr}(X,Y)_{pro-sm}^+$ 
\begin{align*}
& \mathscr  G_0 \Bigl (\sum_in_i[X \xleftarrow {f_i} M_i \xrightarrow {g_i} Y] \Bigr) := \sum_in_i(f_i)_*(g_i)^*:\mathscr G_0(Y) \to \mathscr G_0(X),  \\
& \mathscr  H^{Todd}_*\Bigl (\sum_in_i[X \xleftarrow {f_i} M_i \xrightarrow {g_i} Y] \Bigr) \\
& \qquad \qquad \qquad := \sum_in_i(f_i)_* \Bigl(td(T_{g_i})\cap (g_i)^* \Bigr):\mathscr H^{Todd}_*(Y)  \to \mathscr H^{Todd}_*(X) .
\end{align*}
\end{enumerate}
Then $\mathscr G_0: \mathscr  Corr^+_{pro-sm} \to \mathscr  Ab$ and $\mathscr  H^{Todd}_*: \mathscr  Corr^+_{pro-sm} \to \mathscr  Ab$ are both 
functors in the sense of $\mathscr G_0(\alp \circ \be) = \mathscr G_0(\alp) \circ \mathscr G_0(\be)$ and $\mathscr H^{Todd}_*(\alp \circ \be) = \mathscr H^{Todd}_*(\alp) \circ \mathscr H^{Todd}_*(\be)$ and Baum--Fulton--MacPherson's Todd class transformation $td_*^{\op{BFM}}: G_0(-) \to H_*(-)\otimes \mathbb Q$ is extended to the natural transformation
$$td_*^{\op{BFM}}: \mathscr  G_0(-) \to \mathscr  H^{Todd}_*(-).$$
\end{thm}
\begin{rem} To avoid some possible confusion, we make a remark about notations or symbols concerning the Grothendieck group or the group completion of an Abelian monoid $A$. Let $(A, \dot +)$ be an Abelian monoid. Then its group completion or the Grothendieck group $K(A)$ has two constructions and one construction is defined by
$K(A):=F(A)/E(A),$
where $F(A)$ is the free abelian group generated by the set $A$, i.e.,
$$F(A):=\Bigl \{ \sum n_a a \Bigl | \, a \in A, n_a \in \mathbb Z, n_a = 0 \, \, \text{for almost all $a$'s} \Bigr \}$$
and the group operation $+$ on $F(A)$ is defined by
$$\sum n_a a  + \sum m_a a  := \sum (n_a + m_a) a$$
and $E(A)$ is the following subgroup of $F(A)$:
\begin{align*}
E(A) := \Bigl \{ \sum n_{a,b} & \bigl (a + b + (-1)(a \dot +  b) \bigr ) \\
& \Bigl | \, a, b \in A, n_{a,b} \in \mathbb Z, n_{a,b} = 0 \, \, \text{for almost all $a$'s and $b$'s} \Bigr \}.
\end{align*}
The equivalence class $a + E(A) \in K(A)$ of $a \in F(A)$ should be denoted by $[a]$ and the group operation on $F(A)$ should be denoted by, say, $\widehat +$,  and
$[a] \, \widehat + \, [b] := [a + b].$
Then, since $K(A) =F(A)/E(A)$, $[a+b] =[a \dot +  b]$ in $K(A)$, thus we have 
$[a] \, \widehat + \, [b]=[a \dot + b].$
In this way, the monoid operation $\dot +$ on $A$ is extended to a group operation $\widehat +$ on $K(A)$.
To avoid many symbols or notation, usually $[a] \, \widehat + \, [b]$ is simply denoted by $a + b$ unless some confusion is possible. Thus, in the above theorem, an element $\sum_in_i[X \xleftarrow {f_i} M_i \xrightarrow {g_i} Y]  \in \op{Corr}(X,Y)_{pro-sm}^+$ should be denoted by
$\sum_in_i \bigl [[X \xleftarrow {f_i} M_i \xrightarrow {g_i} Y] \bigr ],$
but we follow the above convention.
\end{rem}
In a similar manner we can extend the other well-know characteristic classes of singular varieties (\cite{Mac}, \cite{BSY}) to the enriched category $\mathscr  Corr(\mathscr V) _{pro-sm}^+$.  
Furthermore, similarly, we can define \emph{a proper-local complete intersection (abbr. $\ell.c.i.$) correspondence}. But, in this case, since the pullback of a $\ell.c.i$-morphism is not necessarily a $\ell.c.i.$-morphism, thus an argument similar to the above does not work. To remedy this drawback, we consider \emph{a proper-$\ell.c.i.$ zigzag}, which is a finite sequence of proper-$\ell.c.i.$ correspondence $X \xleftarrow f M \xrightarrow g Y$ with proper $f$ and $\ell.c.i.$ morphism $g$. 
This will be discussed in \S 4. In \S 5 we will define the enriched category $\mathscr M_{*,*}(\mathscr V)^+_{\otimes}$ of cobordism bicycles of vector bundles and extend Baum--Fulton--MacPherson's Riemann--Roch transformation or Todd class transformation to this enriched category:
\begin{thm} 
\begin{enumerate}
\item Let us define $\mathscr G_0^{\otimes}: \mathscr M_{*,*}(\mathscr V)^+_{\otimes} \to \mathscr Ab$ by
\begin{enumerate}
\item For an object $X$, $\mathscr G_0^{\otimes}(X) =G_0(X)$,
\item For a morphism $\sum_i n_i[X \xleftarrow {p_i} V_i \xrightarrow {s_i} Y; E_i] \in hom_{\mathscr M_{*,*}(\mathscr V)^+_{\otimes}}(X,Y) =\mathcal  M_{*,*}(X,Y)^+_{\otimes}$,
\begin{align*}
\mathscr G_0^{\otimes}(& \sum_i n_i[X \xleftarrow {p_i} V_i \xrightarrow {s_i} Y; E_i]):= \\
& \hspace{1cm} \sum_i n_i (p_i)_*  ([E_i] \otimes (s_i)^* ): \mathscr G_0^{\otimes}(Y) \to \mathscr G_0^{\otimes}(X).
\end{align*}
\end{enumerate}
Then $\mathscr G_0^{\otimes}: \mathscr M_{*,*}(\mathscr V) ^+_{\otimes} \to \mathscr Ab$ is a 
functor in the sense of 
$\mathscr G_0^{\otimes}(\alp \circ_{\otimes} \be) = \mathscr G_0^{\otimes}(\alp)  \circ \mathscr G_0^{\otimes}(\be).$
\item For the Todd class $td$ and the Chern character $ch$, we define 

$\mathscr H^{td, ch}_*: \mathscr M_{*,*}(\mathscr V)^+_{\otimes} \to \mathscr Ab$ by
\begin{enumerate}
\item $\mathscr H^{td, ch}_*(X) := H_*(X)\otimes \mathbb Q$,
\item For {\small $\sum_i n_i[X \xleftarrow {p_i} V_i \xrightarrow {s_i} Y; E_i] \in hom_{\mathscr M_{*,*}(\mathscr V)^+_{\otimes}}(X,Y)=\mathcal  M_{*,*}(X,Y)^+_{\otimes}$},
\begin{align*}
& \mathscr H^{td, ch}_* (\sum_i n_i[X \xleftarrow {p_i} V_i \xrightarrow {s_i} Y; E_i]):= \\
& \sum_i n_i {p_i}_*\bigl(td(T_{s_i}) \cap ch(E_i) \cap (s_i)^* \bigr):\mathscr H^{td, ch}_*(Y) \to \mathscr H^{td, ch}_*(X).
\end{align*}
\end{enumerate}
Then $\mathscr H^{td, ch}_*: \mathscr M_{*,*}(\mathscr V)^+_{\otimes} \to \mathscr Ab$ is a  
functor in the sense of 
$\mathscr H^{td, ch}_*(\alp \circ_{\otimes} \be) = \mathscr H^{td, ch}_*(\alp) \circ \mathscr H^{td, ch}_*(\be).$
\item  Baum--Fulton--MacPherson's Todd class transformation $td_*^{\op{BFM}}$ gives rise to the natural transformation of these two 
functors $\mathscr G_0^{\otimes}: \mathscr M_{*,*}(\mathscr V)^+_{\otimes} \to \mathscr Ab$ and $\mathscr H^{td, ch}_*: \mathscr M_{*,*}(\mathscr V)^+_{\otimes} \to \mathscr Ab: td_*^{\op{BFM}}: \mathscr G_0^{\otimes}(-) \to \mathscr H^{td, ch}_*(-).$
\end{enumerate}
\end{thm}

\begin{rem}\label{remark} In \cite{LS} P. Lowrey and T. Sch\"urg have constructed a derived algebraic cobordism $d\Omega_*(X)$ for derived schemes. In \cite{An} 
  T. Annala has obtained a bivariant-theoretic version $\Omega^*(X \to Y)$ of Levine--Morel's algebraic cobordism, using the construction of Lowrey and Sch\"urg and the construction of a universal bivariant theory of the author \cite{Yokura-obt}. Furthermore, in \cite{AY} (also see \cite{An2}) T. Annala and the author have constructed a bivariant-theoretic version of Lee--Pandharipande's algebraic cobordism of vector bundles \cite{LeeP}.
\end{rem}

\section{Enriched Categories of correspondences}

\begin{defn} Let $\mathscr V$ be the category of complex algebraic varieties. For any pair $(X,Y)$ of complex algebraic varieties $X$ and $Y$, the set of all correspondences $X \xleftarrow f V \xrightarrow g Y$ is denoted by $\op{Corr}(X,Y)$:
$$\op{Corr}(X,Y) := \bigl \{X \xleftarrow f V \xrightarrow g Y \, \, | \, \,  f \in hom_{\mathscr V}(V, X), g \in hom_{\mathscr V}(V, Y) \bigr \}.$$
\end{defn}
\begin{defn} The \emph{category $\op{Corr}(\mathscr V)$ of correspondences of $\mathscr V$} is defined by:
\begin{enumerate}
\item $Obj(\op{Corr}(\mathscr V))=Obj(\mathscr V).$
\item For two objects $X$ and $Y$, the set of homomorphisms from $X$ to $Y$ is defined to be $\op{Corr}(X,Y)$, i.e.,
$hom_{\op{Corr}(\mathscr V)}(X,Y)=\op{Corr}(X,Y),$
where the composition 
\begin{equation}\label{product*}
\circ :\op{Corr}(X,Y) \times \op{Corr}(Y,Z) \to \op{Corr}(X,Z)
\end{equation}
is defined by, for $(X \xleftarrow f M \xrightarrow g Y) \in \op{Corr}(X,Y)$ and $(Y \xleftarrow h N \xrightarrow k Z) \in \op{Corr}(Y,Z)$,
$$(X \xleftarrow f M \xrightarrow g Y) \circ (Y \xleftarrow h N \xrightarrow k Z): = X \xleftarrow {f \circ \widetilde{h}} M \times_Y N \xrightarrow {k \circ \widetilde {g}} Z,$$
where we use the diagram (\ref{product}).
\end{enumerate}
\end{defn}
\begin{defn} Let $\mathscr Ab$ be the category of abelian groups. A functor $E: \mathscr V \to \mathscr Ab$ is called a  \emph{bifunctor}\footnote{Usually a bifunctor or bi-functor is used for a functor $E:\mathscr C^{op} \times \mathscr C' \to \mathscr B$ defined on the Cartesian product of two categories, which is contravariant with respect to the first factor and covariant with respect to the second factor.} if it is both a covariant and a contravariant functor.
\end{defn}
\begin{lem}\label{lem1} Let $E:\mathscr V \to \mathscr Ab$ be a bifunctor.
We define $\mathcal  E:\op{Corr}(\mathscr V) \to \mathscr  Ab$ by 
\begin{enumerate}
\item For an object $X \in Obj(\op{Corr}(\mathscr V)) = Obj(\mathscr V)$, $\mathcal  E(X) = E(X)$.
\item For a morphism $(X \xleftarrow f  V  \xrightarrow g Y) \in hom_{\op{Corr}(\mathscr V)}(X,Y) =Corr(X,Y)$, 
$$\mathcal  E(X \xleftarrow f  V  \xrightarrow g Y):=f_*g^*:  \mathcal E(Y) \to \mathcal E(X).$$
\end{enumerate}
If $E$ satisfies the base change formula, i.e., for any fiber square (left square) the following diagram (the right square) commutes: 
$$\xymatrix{
A' \ar[d]_{\widetilde h} \ar[r]^{\widetilde g} & A \ar[d]^{h \quad \qquad \Longrightarrow \quad \qquad} \\
B' \ar[r]_g & B,
}
\xymatrix{
E(A') \ar[d]_{(\widetilde h)_*} && E(A) \ar[ll]_{(\widetilde g)^*} \ar[d]^{h_*}\\
E(B') && E(B) \ar[ll]^{g^*}, 
}
$$
 then $\mathcal  E:\op{Corr}(\mathscr V) \to \mathscr  Ab$ is a  
 functor in the sense of
$\mathcal E(\alp \circ \be) = \mathcal  E(\alp)\circ \mathcal E(\be)$
 for 
 $\alp \in hom_{\op{Corr}(\mathscr V)}(X,Y) =\op{Corr}(X,Y)$ and $\be \in hom_{\op{Corr}(\mathscr V)}(Y,Z) =\op{Corr}(Y,Z)$.
\end{lem}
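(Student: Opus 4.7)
The plan is to unpack the definitions on both sides of the desired equality and reduce the claim to a single application of the base change hypothesis. Fix morphisms $\alpha = (X \xleftarrow f M \xrightarrow g Y) \in \op{Corr}(X,Y)$ and $\beta = (Y \xleftarrow h N \xrightarrow k Z) \in \op{Corr}(Y,Z)$, so that by the definition of the composition in $\op{Corr}(\mathscr V)$ we have
$$\alpha \circ \beta = \bigl(X \xleftarrow {f \circ \widetilde h} M \times_Y N \xrightarrow {k \circ \widetilde g} Z\bigr),$$
where $\widetilde h$ and $\widetilde g$ come from the fiber square in diagram (\ref{product}).

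Next, I would compute each side of the desired identity $\mathcal E(\alpha \circ \beta) = \mathcal E(\alpha) \circ \mathcal E(\beta)$ directly from the definition. On the left, using the covariant functoriality of $E$ for proper morphisms and the contravariant functoriality of $E$ for all morphisms (both of which come for free from the assumption that $E$ is a totally bifunctor), one obtains
$$\mathcal E(\alpha \circ \beta) = (f \circ \widetilde h)_* (k \circ \widetilde g)^* = f_* \, \widetilde h_* \, \widetilde g^* \, k^*.$$
On the right, simply substituting the definitions gives
$$\mathcal E(\alpha) \circ \mathcal E(\beta) = (f_* g^*) \circ (h_* k^*) = f_* \, g^* h_* \, k^*.$$
Comparing the two expressions, the identity reduces to showing that the middle pieces agree, namely $g^* h_* = \widetilde h_* \widetilde g^*$ as maps $E(N) \to E(M)$.

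This last equality, however, is exactly the base change formula applied to the fiber square
$$\xymatrix{
M \times_Y N \ar[d]_{\widetilde h} \ar[r]^{\widetilde g} & N \ar[d]^{h} \\
M \ar[r]_{g} & Y,
}$$
which is part of the hypothesis on $E$. Thus the proof is essentially bookkeeping once the definitions are written out; the only real content is the base change hypothesis, and there is no genuine obstacle beyond checking that the two covariances ($_*$ for the ``left legs'' and $^*$ for the ``right legs'') are consistently used and that the relevant square in diagram (\ref{product}) is indeed the one to which base change must be applied.
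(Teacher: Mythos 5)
Your proof is correct and follows essentially the same route as the paper: expand $\alpha\circ\beta$ via the fiber product, expand $\mathcal E$ on both sides, and observe that the two expressions match once $\widetilde h_*\widetilde g^*=g^*h_*$, which is exactly the base change hypothesis applied to the middle square of diagram (\ref{product}). One small inaccuracy in phrasing: you say $E$ is covariant ``for proper morphisms,'' but in Lemma \ref{lem1} the hypothesis is that $E$ is a \emph{totally} bifunctor, meaning it is covariant (and contravariant) for \emph{all} morphisms of $\mathscr V$; no properness assumption is in force at this stage (that restriction appears only later for the proper--smooth category). This does not affect the validity of the argument.
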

\begin{proof} Let $\alp = (X \xleftarrow f M \xrightarrow g Y) \in \op{Corr}(X,Y)$ and $\be = (Y \xleftarrow h N \xrightarrow k Z) \in \op{Corr}(Y,Z)$.
Then 
$\alp \circ \be = X \xleftarrow {f \circ \widetilde{h}} M \times_Y N \xrightarrow {k \circ \widetilde {g}} Z$ (see the diagram (\ref{product})).
Hence 
\begin{align*}
\mathcal E(\alp \circ \be) & = \mathcal E(X \xleftarrow {f \circ \widetilde{h}} M \times_Y N \xrightarrow {k \circ \widetilde {g}} Z) \\
& = (f\circ\widetilde h)_*(k \circ \widetilde g)^*\\
& = f_* ((\widetilde h)_*(\widetilde g)^*) k^* \\
& = f_* (g^*h_*) k^* \quad \text{(by the above base change formula)}\\
& = (f_* g^*)\circ (h_*k^*) \\
& = \mathcal  E(\alp)\circ \mathcal E(\be).
\end{align*}
\end{proof}
\begin{rem} We remark that if a bifunctor $E:\mathscr V \to \mathscr Ab$ satisfies the above base change formula for a fiber square, then it satisfies the ``isomorphism condition'' that for an isomorphism $h:M \xrightarrow {\cong} M'$
$$h^*h_* = \op{id}_{E(M)}, \quad h_*h^* = \op{id}_{E(M')}.$$
These follow from the fact that the following two squares are fiber squares:
$$
\xymatrix{
M \ar[d]_{\op{id}_M}^{=} \ar[r]^{\op{id}_M}_{=} & M \ar[d]^{h}_{\cong} \\
M  \ar[r]_{h}^{\cong} & M'
}
\qquad \qquad 
\xymatrix{
M \ar[d]_{h}^{\cong} \ar[r]^{h}_{\cong} & M' \ar[d]^{\op{id}_{M'}}_{=} \\
M'  \ar[r]_{\op{id}_{M'}}^{=} & M'
}
$$
\end{rem} 

Suppose that we have two bifunctors $E_1, E_2:\mathscr V \to \mathscr  Ab$ satisfying the base change formula described above and a natural transformation $\tau:E_1 \to E_2$, which means that it is a natural transformation form the covariant functor $E_1$ to the covariant functor $E_2$ and at the same time it is a natural transformation form the contravariant functor $E_1$ to the contravariant functor $E_2$. Then we get two functors associated to the bifunctors $E_1, E_2:\mathscr V \to \mathscr  Ab$
$$\mathcal E_1, \mathcal E_2: \op{Corr}(\mathscr V) \to \mathscr  Ab.$$
Since we have the following commutative diagram for a correspondence $X \xleftarrow f M \xrightarrow g Y$, 
\begin{equation}\label{diagram-1}
\xymatrix{
E_1(X) \ar[d]_{\tau}  & E_1(M) \ar[l]_{f_*} \ar[d]_{\tau}   & E_1(Y) \ar[l]_{g^*} \ar[d]^{\tau} \\
E_2(X)  & E_2(M) \ar[l]^{f_*} & E_2(Y) \ar[l]^{g^*}, 
}
\end{equation}
the natural transformation $\tau:E_1 \to E_2$ give rise to the associated natural transformation between two functors $\mathcal E_1, \mathcal E_2: \op{Corr}(\mathscr V) \to \mathscr  Ab$:
$$\mathcal T: \mathcal E_1 \to \mathcal E_2.$$
Namely, for an object $X$, $\mathcal T: \mathcal E_1(X) \to \mathcal E_2(X)$ is nothing but the homomorphism $\tau:\mathcal E_1(X) \to \mathcal E_2(X)$. For a morphism
$(X \xleftarrow f M \xrightarrow g Y) \in hom_{\op{Corr}(\mathscr V)}(X,Y) =\op{Corr}(X,Y)$, $\mathcal T: \mathcal E_1 \to \mathcal E_2$ being a natural transformation means that the following diagram commutes:
\begin{equation*}
\xymatrix{
\mathcal E_1(X) \ar[d]_{\tau}  && \mathcal E_1(Y) \ar[ll]_{f_*g^*} \ar[d]^{\tau} \\
\mathcal E_2(X)  & &  \mathcal E_2(Y) \ar[ll]^{f_*g^*}, 
}
\end{equation*}
which is nothing but the above commutative diagram (\ref{diagram-1}).
\begin{rem}  We define the following subcategories $\op{Corr}(\mathscr V)_{-,\op{id}} , \op{Corr}(\mathscr V)_{\op{id}, -}$ of $\op{Corr}(\mathscr V)$:
\begin{align*}
\op{Corr}(\mathscr V)_{-,\op{id}}(X,Y) & := \op{Corr}(X,Y)_{-,\op{id}} \\
& = \{X \xleftarrow f Y \xrightarrow {\op{id}_Y} Y \, | \, f \in hom_{\mathscr V}(Y,X) \} \cong hom_{\mathscr V}(Y,X).
\end{align*}
\begin{align*}
\op{Corr}(\mathscr V)_{\op{id}, -}(X,Y) & := \op{Corr}(X,Y)_{\op{id}, -} \\
& = \{X \xleftarrow {\op{id}_X} X \xrightarrow g  Y \, | \, g \in hom_{\mathscr V}(X,Y) \} \cong  hom_{\mathscr V}(X,Y). 
\end{align*}
Then, if we restrict the functor $\mathcal E: \op{Corr}(\mathscr V) \to \mathscr Ab$ to these two subcategories, then
$\mathcal E: \op{Corr}(\mathscr V)_{-,\op{id}} \to \mathscr Ab$ is the same as the covariant functor $E: \mathscr V \to \mathscr Ab$ and $\mathcal E: \op{Corr}(\mathscr V)_{\op{id}, -} \to \mathscr Ab$ is the same as the contravariant functor $E: \mathscr V \to \mathscr Ab$.
\end{rem}

Two correspondences $X \xleftarrow {f_1} M_1 \xrightarrow {g_1} Y$ and $X \xleftarrow {f_2} M_2 \xrightarrow {g_2} Y$ in the category $\mathscr V$ are called isomorphic if there exists an isomorphism $h:M_1 \cong M_2$ such that the following diagram commutes:
$$\xymatrix{
&M_1 \ar[dl]_{f_1} \ar[dd]_{h}^{\cong} \ar [dr]^{g_1} &\\
X & & Y\\
&M_2 \ar[ul]^{f_2} \ar [ur]_{g_2} &}$$
The isomorphism class is denoted by $[X \xleftarrow f M \xrightarrow g Y]$ and the set of isomorphism classes of such correspondences becomes an Abelian monoid by taking the disjoint union $\sqcup$ , i.e.,
$$[X \xleftarrow {f_1} M_1 \xrightarrow {g_1} Y] + [X \xleftarrow {f_2} M_2 \xrightarrow {g_2} Y] := [X \xleftarrow {f_1 \sqcup f_2} M_1 \sqcup M_2 \xrightarrow {g_1 \sqcup g_2} Y].$$
The definition is well-defined, i.e., it does not depend on the choice of representatives. The group completion of this Abelian monoid, i.e., the Grothendieck group of a commutative monoid, is denoted by
$\op{Corr}(X,Y)^+$.
We observe that the product of correspondences (Definition \ref{product*}):
$$\circ: \op{Corr}(X,Y) \times \op{Corr}(Y,Z) \to \op{Corr}(X,Z)$$
can be extended to the Grothendieck group $\op{Corr}(X,Y)^+$, i.e., we have the following bilinear product:
\begin{lem} For three varieties $X,Y, Z$ we have the bilinear map
$$\circ: \op{Corr}(X,Y)^+\times \op{Corr}(Y,Z)^+ \to \op{Corr}(X,Z)^+$$
defined by
\begin{align*}
\Bigl (\sum_i n_i[X \xleftarrow {f_i} M_i \xrightarrow {g_i} Y] \Bigr)  & \circ \Bigl (\sum_j m_j[Y \xleftarrow {h_j} N_j \xrightarrow {k_j} Z]\Bigr) \\
& :=\sum_{i,j}n_im_j[(X \xleftarrow {f_i} M_i \xrightarrow {g_i} Y) \circ (Y \xleftarrow {h_j} N_j \xrightarrow {k_j} Z)].
\end{align*}
\end{lem}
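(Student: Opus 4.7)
The plan is to verify the assertion in three stages: (i) well-definedness of $\circ$ on isomorphism classes of correspondences, (ii) biadditivity of $\circ$ with respect to the disjoint-union monoid structure on $Corr(X,Y)_{iso}$ (the monoid of isomorphism classes), and (iii) passage to the group completions via the universal property of the Grothendieck group.

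First I would check that the product $(X \xleftarrow{f} M \xrightarrow{g} Y) \circ (Y \xleftarrow{h} N \xrightarrow{k} Z) = X \xleftarrow{f\circ\widetilde h} M\times_Y N \xrightarrow{k\circ\widetilde g} Z$ descends to isomorphism classes. If $\varphi: M_1 \xrightarrow{\cong} M_2$ and $\psi: N_1 \xrightarrow{\cong} N_2$ are isomorphisms of correspondences, then the universal property of the fiber product yields an induced isomorphism $\varphi\times_Y\psi: M_1\times_Y N_1 \xrightarrow{\cong} M_2\times_Y N_2$ compatible with the projections and with $f_i\circ\widetilde h_i$ and $k_i\circ\widetilde g_i$; this is a purely formal diagram chase.

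Next I would establish biadditivity at the level of isomorphism classes. The key fact is that the fiber product distributes over disjoint union: for any morphisms to $Y$,
\begin{equation*}
(M_1 \sqcup M_2)\times_Y N \;\cong\; (M_1\times_Y N)\sqcup (M_2\times_Y N),
\end{equation*}
and symmetrically in the second factor. Consequently
\begin{equation*}
([\alpha_1]+[\alpha_2])\circ[\beta] = [\alpha_1\circ\beta] + [\alpha_2\circ\beta],\qquad [\alpha]\circ([\beta_1]+[\beta_2]) = [\alpha\circ\beta_1]+[\alpha\circ\beta_2],
\end{equation*}
which exhibits $\circ$ as a biadditive pairing of commutative monoids $Corr(X,Y)_{iso}\times Corr(Y,Z)_{iso}\to Corr(X,Z)_{iso}$.

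Finally, because the Grothendieck group $A^+$ of a commutative monoid $A$ enjoys the universal property that every monoid homomorphism $A\to G$ into an abelian group $G$ factors uniquely through $A^+$, a biadditive pairing of monoids $A\times B\to C$ extends uniquely to a $\mathbb Z$-bilinear map $A^+\times B^+\to C^+$. Applying this to our pairing yields the asserted bilinear map, and the formula on generators $\sum_i n_i[\alpha_i]$ and $\sum_j m_j[\beta_j]$ is forced by bilinearity, matching the displayed expression in the statement. The only place requiring genuine care is step (ii), where one must invoke the correct universal property of fiber products to justify the distributivity over $\sqcup$; everything else is formal.
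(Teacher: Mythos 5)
Your proof is correct and takes essentially the same approach as the paper: the paper's own proof only explicitly verifies the well-definedness of $\circ$ on isomorphism classes via the universal property of the fiber product (your step (i)), and leaves the biadditivity with respect to disjoint union and the passage to the Grothendieck group completion as implicit formal consequences. Your steps (ii) and (iii), which spell out the distributivity of the fiber product over $\sqcup$ and the universal property of the group completion, are correct and simply make explicit what the paper treats as routine.
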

\begin{proof} 
(i) First we show that the following
$$[X \xleftarrow f M \xrightarrow g Y] \circ [Y \xleftarrow h N \xrightarrow k Z]:= [(X \xleftarrow f M \xrightarrow g Y) \circ (Y \xleftarrow h N\xrightarrow k Z)]$$
is well-defined, i.e., it is independent of the choice of a representative. Namely, $(X \xleftarrow f M \xrightarrow g Y) \cong (X \xleftarrow {f'} M' \xrightarrow {g'} Y)$ and $(Y \xleftarrow h N \xrightarrow k Z) \cong (Y \xleftarrow {h'} N' \xrightarrow {k'} Z)$ imply that
$$(X \xleftarrow f M \xrightarrow g Y) \circ (Y \xleftarrow h N\xrightarrow k Z) \cong (X \xleftarrow {f'} M' \xrightarrow {g'} Y) \circ (Y \xleftarrow {h'} N' \xrightarrow {k'} Z).$$
This isomorphism follows from the universality of the fiber product and from the following commutative diagrams:
$$\xymatrix{
&& M\times_Y N \ar [dl]_{\widetilde{h}} \ar[dr]^{\widetilde{g}} &&\\
& M \ar [dl]_{f} \ar [dr]^{g} && N \ar [dl]_{h} \ar[dr]^{k}\\
X & &  Y && Z\\
& M' \ar [ul]^{f'} \ar [ur]_{g'} && N' \ar [ul]^{h'} \ar[ur]_{k'}\\
&& M'\times_Y N' \ar [ul]^{\widetilde{h'}} \ar[ur]_{\widetilde{g'}} &&
 }
$$
(ii) Next, in order to be able to extend the product $\circ$ to the Grothendieck group, we need to show that with respect to each factor, the disjoint sum is distributive, i.e., we show the following distributivity (below $+$ indicates the addition defined by the disjoint sum as defined above):
\begin{align*}
& \Bigl ([X \xleftarrow {f_1} M_1 \xrightarrow {g_1} Y]  + [X \xleftarrow {f_2} M_2 \xrightarrow {g_2} Y] \Bigr) \circ [Y \xleftarrow h N \xrightarrow k Z] \\
& = [X \xleftarrow {f_1} M_1 \xrightarrow {g_1} Y] \circ [Y \xleftarrow h N \xrightarrow k Z] + [X \xleftarrow {f_2} M_2 \xrightarrow {g_2} Y] \circ [Y \xleftarrow h N \xrightarrow k Z],
\end{align*}
\begin{align*}
& [X  \xleftarrow {f}  M \xrightarrow {g} Y] \circ \Bigl ([Y \xleftarrow {h_1} N_1 \xrightarrow {k_1} Z] + [Y \xleftarrow {h_2} N_2 \xrightarrow {k_2} Z] \Bigr ) \\
& = [X \xleftarrow {f} M \xrightarrow {g} Y] \circ [Y \xleftarrow {h_1} N_1 \xrightarrow {k_1} Z]  + [X \xleftarrow {f} M \xrightarrow {g} Y] \circ [Y \xleftarrow {h_2} N_2 \xrightarrow {k_2} Z].
\end{align*}
We show only the first one, since it is the same for the second one.
Namely (see the diagrams below), it suffices to show the following:
\begin{align*}
[X \xleftarrow {(f_1 \sqcup f_2) \circ \overline h } & (M_1 \sqcup M_2) \times_Y N  \xrightarrow {k\circ \overline {g_1 \sqcup g_2}} Z] \\
& = [X \xleftarrow {f_1 \circ \widetilde h }  M_1 \times_Y N \xrightarrow {k\circ \widetilde {g_1}} Z] + [X \xleftarrow {f_2 \circ \widetilde {\widetilde h} } M_2 \times_Y N \xrightarrow {k\circ \widetilde {g_2}} Z]\\
&= [X \xleftarrow {f_1 \circ \widetilde h \sqcup  f_2 \circ \widetilde {\widetilde h}}  \bigl (M_1 \times_Y N \bigr) \sqcup \bigl (M_2 \times_Y N \bigr) \xrightarrow {k\circ \widetilde {g_1} \sqcup  k \circ \widetilde {g_2}} Z]\\
&= [X \xleftarrow {(f_1 \sqcup f_2) \circ (\widetilde h \sqcup \widetilde {\widetilde h})}  \bigl (M_1 \times_Y N \bigr) \sqcup \bigl (M_2 \times_Y N \bigr) \xrightarrow {k\circ (\widetilde {g_1} \sqcup \widetilde {g_2})} Z]
\end{align*}
$$
\xymatrix{
&& M_1\times_Y N \ar [dl]_{\widetilde{h}} \ar[dr]^{\widetilde{g_1}} &&\\
& M_1 \ar [dl]_{f_1} \ar [dr]^{g_1} && N \ar [dl]_{h} \ar[dr]^{k}\\
X & &  Y && Z,  }
$$
$$
\xymatrix{
&& M_2\times_Y N \ar [dl]_{\widetilde {\widetilde{h}}} \ar[dr]^{\widetilde{g_2}} &&\\
& M_2 \ar [dl]_{f_2} \ar [dr]^{g_2} && N \ar [dl]_{h} \ar[dr]^{k}\\
X & &  Y && Z, }
$$
$$
\xymatrix{
&& (M_1 \sqcup M_2) \times_Y N \ar [dl]_{\overline {h}} \ar[dr]^{\overline {g_1 \sqcup g_2}} &&\\
& M_1 \sqcup M_2 \ar [dl]_{f_1 \sqcup f_2} \ar [dr]^{g_1 \sqcup g_2} && N \ar [dl]_{h} \ar[dr]^{k}\\
X & &  Y && Z. }
$$

By the universality of the fiber product it follows that $(M_1 \sqcup M_2) \times_Y N \cong \bigl (M_1 \times_Y N \bigr) \sqcup \bigl (M_2 \times_Y N \bigr)$, i.e., the fiber product commutes with the disjoint union $\sqcup$, and that $\overline h= \widetilde h \sqcup \widetilde {\widetilde h}$ and $\overline {g_1 \sqcup g_2}= \widetilde {g_1} \sqcup \widetilde {g_2}$, i.e., the pullback commutes with the disjoint union $\sqcup$. Hence we have that 
\begin{align*}
[X \xleftarrow {(f_1 \sqcup f_2)  \circ \overline h } & (M_1 \sqcup M_2) \times_Y N   \xrightarrow {k\circ \overline {g_1 \sqcup g_2}} Z] \\
&= [X \xleftarrow {(f_1 \sqcup f_2) \circ (\widetilde h \sqcup \widetilde {\widetilde h})}  \bigl (M_1 \times_Y N \bigr) \sqcup \bigl (M_2 \times_Y N \bigr) \xrightarrow {k\circ (\widetilde {g_1}  \sqcup \widetilde {g_2})} Z].
\end{align*}
\end{proof}
\begin{cor} The Abelian group $\op{Corr}(X,X)^+$ becomes a ring with zero $[X \xleftarrow {} \emptyset \xrightarrow {} X]$ and  unit $[X \xleftarrow {\op{id}_X} X \xrightarrow {\op{id}_X} X]$ by the above product.
\end{cor}
\begin{rem}The ring $\op{Corr}(X,X)^+$ is called \emph{the Hecke ring of $X$} or \emph{the ring of correspondences of $X$} in Michio Kuga's article \cite[\S 2 Toy Hecke Rings and Toy Zeta Functions]{Kuga} (cf.\cite[Appendix, \S 4 Hecke Rings]{Kuga2}).
\end{rem}
\begin{defn} The following category $\mathscr Corr(\mathscr V)^+$ is called \emph{the $\mathscr Ab$-enriched (or preadditive) category of correspondences}:
\begin{enumerate}
\item $Obj(\mathscr Corr(\mathscr V)^+) = Obj(\mathscr V).$
\item For two objects $X$ and $Y$, $hom_{\mathscr Corr(\mathscr V)^+}(X,Y) = \op{Corr}(X,Y)^+.$
\end{enumerate}
\end{defn}
Here is an enriched category version of Lemma \ref{lem1}.
\begin{lem}\label{lem2} Let $E:\mathscr V \to \mathscr Ab$ be a  
bifunctor such that 
\begin{enumerate}
\item[(i)] it satisfies the base change formula for fiber products, i.e., for a fiber square (the left square) the following diagram (the right square) commutes: 
$$\xymatrix{
A' \ar[d]_{\widetilde h} \ar[r]^{\widetilde g} & A \ar[d]^{h \quad \qquad \Longrightarrow \quad \qquad} \\
B' \ar[r]_g & B,
}
\xymatrix{
E(A') \ar[d]_{(\widetilde h)_*} && E(A) \ar[ll]_{(\widetilde g)^*} \ar[d]^{h_*}\\
E(B') && E(B) \ar[ll]^{g^*}, 
}
$$ 
\item[(ii)] it is additive with respect to the disjoint union $\sqcup$ in the sense that 
$$i^* \oplus (i')^*: E(X \sqcup X') \cong E(X) \oplus E(X')$$ 
where $i:X \to X \sqcup X'$ and $i':X \to X \sqcup X'$ are inclusions, and
\item[(iii)] it is functorial with pushforward $f_*$ and pullback $g^*$.
\end{enumerate}
We define $\mathscr  E:\mathscr Corr(\mathscr V)^+ \to \mathscr  Ab$ by 
\begin{enumerate}
\item For an object $X \in Obj(\mathscr Corr(\mathscr V)^+) = Obj(\mathscr V)$, $\mathscr  E(X) = E(X)$.
\item For $\sum_i n_i[X \xleftarrow {f_i}  M_i \xrightarrow {g_i} Y] \in hom_{\mathscr Corr(\mathscr V)^+ }(X,Y) =\op{Corr}(X,Y)^+$, 
\begin{equation}\label{grot}
\mathscr  E \Bigl (\sum_i n_i[X \xleftarrow {f_i}  M_i \xrightarrow {g_i} Y] \Bigr):=\sum_i n_i(f_i)_*(g_i)^*:  \mathscr E(Y) \to \mathscr E(X).
\end{equation}
\end{enumerate}
Then $\mathscr  E:\mathscr Corr(\mathscr V)^+ \to \mathscr  Ab$ is a functor in the sense of
$\mathscr  E(\alp \circ \be) = \mathscr  E(\alp)\circ \mathscr  E(\be)$.
\end{lem}
\begin{proof} 
Before starting the proof, we observe that the above definition (\ref{grot})
can be interpreted as a group homomorphism
$$\mathscr E: \op{Corr}(X,Y)^+ \to Hom (E(Y), E(X)),$$
where $Hom (E(Y), E(X))$ is the abelian group of all the homomorphisms from $E(Y)$ to $E(X)$.
We also observe that for the inclusions $i_k:M_k \to M_1 \sqcup M_2$ ($k=1,2$), we have 
$$(i_k)^*(i_k)_* = \op{id}_{E(M_k)} (k=1,2), \quad (i_2)^*(i_1)_*= (i_1)^*(i_2)_* = 0$$
 by the following fiber squares:
$$\xymatrix{
M_k \ar[d]_{\op{id}_{M_k}} \ar[r]^{\op{id}_{M_k}} & M_k \ar[d]^{i_k} \\
M_k \ar[r]_{i_k} & M_1 \sqcup M_2,\quad 
}
\xymatrix{
\emptyset \ar[d] \ar[r] & M_1 \ar[d]^{i_1} \\
M_2 \ar[r]_{i_2}  & M_1 \sqcup M_2, \quad 
}
\xymatrix{
\emptyset \ar[d] \ar[r] & M_2 \ar[d]^{i_2} \\
M_1 \ar[r]_{i_1}  & M_1 \sqcup M_2,
}
$$ 
(i) First we show that $\mathscr  E([X \xleftarrow f M \xrightarrow g Y]):=f_*g^*$ is well-defined, namely, it is independent of the choice of a representative, i.e., $(X \xleftarrow f M \xrightarrow {g} Y) \cong (X \xleftarrow {f'} M' \xrightarrow {g'} Y)$ implies $f_*g^* = (f')_*(g')^*.$
Since there exists an isomorphism $h:M \cong M'$ such that the following diagram commutes:
$$\xymatrix{
&M \ar[dl]_{f} \ar[dd]_{h}^{\cong} \ar [dr]^{g} &\\
X & & Y\\
&M' \ar[ul]^{f'} \ar [ur]_{g'} &}$$
\begin{align*}
f_*g^* &=  (f'\circ h)_*(g'\circ h)^*\\
& = (f')_*h_*\circ h^*(g')^*\\
& =(f')_*(h_*\circ h^*)(g')^*\\
& = (f')_*(g')^* \quad \text{(by the isomorphism condition $h_* h^*=\op{id}_{E(M')}$).}
\end{align*}
(ii) Next we show that the definition (\ref{grot}) is well-defined on the Grothendieck group $\op{Corr}(X,Y)^+$. For that it suffices to show that the map
$$ \mathscr E_0:\op{Corr}(X,Y) \to Hom (E(Y), E(X))$$
defined by $\mathscr E_0 ([X \xleftarrow f M \xrightarrow g Y]) := f_* g^*$ is a homomorphism of \emph{monoids}, i.e., it satisfies that
\begin{align*}
\mathscr E_0 ([X \xleftarrow {f_1} M_1 \xrightarrow {g_1} Y] & + [X \xleftarrow {f_2} M_2 \xrightarrow {g_2} Y]) \\
& = 
\mathscr E_0 ([X \xleftarrow {f_1} M_1 \xrightarrow {g_1} Y])  + \mathscr E_0 ([X \xleftarrow {f_2} M_2 \xrightarrow {g_2} Y]),
\end{align*}
namely
\begin{align*}
\mathscr E_0 ([X \xleftarrow {f_1 \sqcup  f_2}  M_1 \sqcup M_2 & \xrightarrow {g_1 \sqcup g_2} Y] ) \\
& =  \mathscr E_0 ([X \xleftarrow {f_1} M_1 \xrightarrow {g_1} Y])  + \mathscr E_0 ([X \xleftarrow {f_2} M_2 \xrightarrow {g_2} Y]),
\end{align*}
i.e., 
\begin{equation}\label{equ2}
 (f_1 \sqcup  f_2)_*(g_1 \sqcup g_2 )^* = (f_1)_*(g_1)^* + (f_2)_*(g_2 )^*.
 \end{equation}
Because, as well-known, it follows from the universality of the Grothendieck group that 
$$\mathscr  E :  \op{Corr}(X,Y)^+ \to Hom(E(Y), E(X))$$
is a unique extension of the above monoid homomorphism $ \mathscr E_0:\op{Corr}(X,Y) \to Hom (E(Y), E(X))$:
$$\xymatrix{
\op{Corr}(X,Y) \ar[dr]_{\gamma} \ar[rr]^{\mathscr E_0} && Hom(E(Y), E(X)) \\
 & \op{Corr}(X,Y)^+ \ar[ur]_{\mathscr E}, }
$$
where $\gamma:\op{Corr}(X,Y)   \to \op{Corr}(X,Y)^+$ maps an element $x$ to the equivalence class $[x]$.
Here is a proof of the above equality (\ref{equ2}): Consider the following commutative diagram:
$$\xymatrix{
&& E(M_1 \sqcup M_2)\ar[dd]^{(i_1)^* \oplus (i_2)^*} \ar[dd]_{\cong}  \ar[dll]_{(f_1 \sqcup f_2)_*}   &&\\
E(X)  &&&& E(Y) \ar[ull]_{(g_1 \sqcup g_2)_*}  \ar[dll]^{(g_1)^* \oplus (g_2)^*} \\
& & E(M_1) \oplus E(M_2) \ar[ull]^{(f_1)_* pr_1 + (f_2)_*pr_2 \quad \quad }  }
$$
Here we note that 
\begin{equation}\label{inverse}
\text{the inverse of $(i_1)^* \oplus (i_2)^*$ \, \, is \, \,  $(i_1)_* pr_1 + (i_2)_*pr_2.$}
\end{equation}
Then we have
\begin{align*}
& (f_1)_*(g_1)^* + (f_2)_*(g_2)^* \\
& = \Bigl ((f_1)_* pr_1 + (f_2)_*pr_2 \Bigr) \circ \Bigl ((g_1)^* \oplus (g_2)^* \Bigr )\\
& = \Bigl ((f_1 \sqcup f_2)_* \circ \bigl ((i_1)_* pr_1 + (i_2)_*pr_2 \bigr ) \Bigr) \circ \Bigl (\bigl ((i_1)^* \oplus (i_2)^* \bigr ) \circ (g_1 \sqcup g_2)_* \Bigr )\\
& = (f_1 \sqcup f_2)_* \circ \Bigl (\bigl ((i_1)_* pr_1 + (i_2)_*pr_2 \bigr ) \circ ((i_1)^* \oplus (i_2)^* \bigr ) \Bigr ) \circ (g_1 \sqcup g_2)_* \\
& = (f_1 \sqcup f_2)_* \circ (g_1 \sqcup g_2)_* 
\end{align*}
The proof of (\ref{inverse}) is the following: 
\begin{itemize}
\item $((i_1)^* \oplus (i_2)^*)\circ \bigl ((i_1)_* pr_1 + (i_2)_*pr_2) \bigr) = \op{id}_{E(M_1)\oplus E(M_2) }$: Indeed, for $(x,y) \in E(M_1)\oplus E(M_1)$,
\begin{align*}
& \Bigl (((i_1)^* \oplus (i_2)^*)\circ ((i_1)_* pr_1 + (i_2)_*pr_2) \Bigr) (x,y) \\
& = ((i_1)^* \oplus (i_2)^*)\circ ((i_1)_*x + (i_2)_*y) )\\
& = ((i_1)^*(i_1)_*x, (i_2)^*(i_2)_*y) \\
& =(x,y) \quad \quad \text{(since $(i_1)^*(i_1)_*= \op{id}_{E(M_1)}$ and $(i_2)^*(i_2)_*= \op{id}_{E(M_2)}$)}
\end{align*}
Hence we have $((i_1)^* \oplus (i_2)^*)\circ ((i_1)_* pr_1 + (i_2)_*pr_2) = \op{id}_{E(M_1)\oplus E(M_2) }$.
\item $((i_1)_* pr_1 + (i_2)_*pr_2)) \circ ((i_1)^* \oplus (i_2)^*)= \op{id}_{E(M_1 \sqcup M_2) }$: It is clear that
$$((i_1)_* pr_1 + (i_2)_*pr_2)) \circ ((i_1)^* \oplus (i_2)^*) =(i_1)_* (i_1)^* + (i_2)_*(i_2)^*.$$
It is more or less clear that $(i_1)_* (i_1)^* + (i_2)_*(i_2)^*= \op{id}_{E(M_1 \sqcup M_2)}$, but this can be seen as follows:
We can write any element $a$ in $E(M_1 \sqcup M_2)$ as $a= (i_1)_*x + (i_2)_*y$ where $(x,y) \in E(M_1)\oplus E(M_2)$. Then
\begin{align*}
& \Bigl( (i_1)_* (i_1)^*  + (i_2)_*(i_2)^* \Bigr)(a) \\
& = \Bigl( (i_1)_* (i_1)^* + (i_2)_*(i_2)^* \Bigr)(a) \\
& = \Bigl( (i_1)_* (i_1)^* + (i_2)_*(i_2)^* \Bigr)((i_1)_*x + (i_2)_*y) \\
& = (i_1)_* (i_1)^*(i_1)_*x + (i_2)_*(i_2)^*(i_1)_*x + (i_1)_* (i_1)^*(i_2)_*y + (i_2)_*(i_2)^*(i_2)_*y\\
& = (i_1)_*x + (i_2)_*y  \\
& \, \, \, \, \text{(since $(i_1)^*(i_1)_*= \op{id}$,$(i_2)^*(i_1)_*= (i_1)^*(i_2)_*=0$, $(i_2)^*(i_2)_*= \op{id}$)}\\
& =a.
\end{align*}
Hence we have $((i_1)_* pr_1 + (i_2)_*pr_2)) \circ ((i_1)^* \oplus (i_2)^*)= \op{id}_{E(M_1 \sqcup M_2) }$.
 \end{itemize}

\end{proof}
Suppose that we have two bifunctors $E_1, E_2:\mathscr V \to \mathscr  Ab$ satisfying the base change formula for fiber products and a natural transformation $\tau:E_1 \to E_2$. Then, in the same way as in the previous section, we get two functors
$$\mathscr E_1, \mathscr E_2: \mathscr Corr(\mathscr V)^+  \to \mathscr  Ab$$
and a natural transformation
$\mathscr T: \mathscr E_1 \to \mathscr E_2.$
\section{Enriched categories of proper-smooth correspondences and \\
characteristic classes of singular varieties}

In this section we consider proper-smooth correspondences $X \xleftarrow f M \xrightarrow g Y$, i.e., $f:M \to X$ is a proper morphism and and $g:M \to Y$ is a smooth morphism. The set of all such proper-smooth correspondences from $X$ to $Y$ is denoted by $\op{Corr}(X,Y)_{pro-sm}$. As explained in the introduction, the following product is well-defined:
$$\circ: \op{Corr}(X,Y)_{pro-sm} \times \op{Corr}(Y,Z)_{pro-sm} \to \op{Corr}(X,Z)_{pro-sm}$$
$$(X \xleftarrow f M \xrightarrow g Y) \circ (Y \xleftarrow h N \xrightarrow k Z): = X \xleftarrow {f \circ \widetilde{h}} M \times_Y N \xrightarrow {k \circ \widetilde {g}} Z,$$
where we use the diagram (\ref{product}).

\begin{defn}
Let $\op{Corr}(\mathscr V)_{pro-sm}$ be the category of proper-smooth correspondences. Namely
\begin{enumerate}
\item $Obj(\op{Corr}(\mathscr V)_{pro-sm}) = Obj(\mathscr V)$,
\item For two objects $X, Y$, $hom_{\op{Corr}(\mathscr V)_{pro-sm}}(X,Y) = \op{Corr}(X,Y)_{pro-sm}$. The composition 
$$ \circ :hom_{\op{Corr}(\mathscr V)_{pro-sm}}(X,Y) \times hom_{\op{Corr}(\mathscr V)_{pro-sm}}(Y,Z) \to hom_{\op{Corr}(\mathscr V)_{pro-sm}}(X,Z)$$
is 
\, $ \circ :\op{Corr}(X,Y)_{pro-sm}\times \op{Corr}(Y,Z)_{pro-sm}\to \op{Corr}(X,Z)_{pro-sm}$.
\end{enumerate}
\end{defn}
The set of isomorphism classes of proper-smooth correspondences becomes an Abelian monoid by taking the disjoint sum  and its group completion shall be denote by $\op{Corr}(X,Y)_{pro-sm}^+$ . Then the above composition (or product) \,\,\, $ \circ :\op{Corr}(X,Y)_{pro-sm}\times \op{Corr}(Y,Z)_{pro-sm}\to \op{Corr}(X,Z)_{pro-sm}$ is extended to 
$$\circ :\op{Corr}(X,Y)_{pro-sm}^+ \times \op{Corr}(Y,Z)_{pro-sm}^+ \to \op{Corr}(X,Z)_{pro-sm}^+.$$
\begin{rem}\label{M+} When $Y$ is a point $pt$, $\op{Corr}(X,pt)_{pro-sm}^+$ is the same as $\mathcal M^+(X)$ defined in \cite{LM, GK2}.
\end{rem}
Using this we can define the following  $\mathscr Ab$-enriched (or preadditive) category associated to such correspondences:
\begin{defn}
\begin{enumerate}
\item $Obj(\mathscr  Corr(\mathscr V) _{pro-sm}^+)=Obj(\mathscr V)$.
\item For two objects $X,Y$, $hom_{\mathscr  Corr(\mathscr V) _{pro-sm}^+}(X,Y) := \op{Corr}(X,Y)_{pro-sm}^+$.
\end{enumerate}
\end{defn}

Baum--Fulton--MacPherson's Todd class (or the singular Riemann--Roch transformation) \cite{BFM} is a unique natural transformation $td_*^{\op{BFM}}:G_0(-) \to H_*(-)\otimes \mathbb Q$ from the covariant functor $G_0(-)$ of Grothendieck groups of coherent sheaves to the covariant Borel--Moore homology theory $H_*(-)\otimes \mathbb Q$ with rational coefficients such that for a smooth variety $X$ the value $td_*^{\op{BFM}}(\mathcal O_X)= td(TX) \cap [X]$ the Poincar\'e dual of the total Todd class $td(TX)$ of the tangent bundle $TX$, where $\mathcal O_X$ is the structure sheaf of $X$. 

For a proper maps $f:M \to X$ we have the commutative diagram:
\begin{equation}\label{diagram*2}
\xymatrix{
G_0(X)\ar[d]_{td_*^{\op{BFM}}}  && G_0(M) \ar[ll]_{f_*} \ar[d]^{td_*^{\op{BFM}}} \\
H_*(X)\otimes \mathbb Q && H_*(M)\otimes \mathbb Q \ar[ll]^{f_*}.
}
\end{equation}
For a smooth morphism $g:M \to Y$ (see Remark \ref{rem-*} below), 
we have the following Verdier--Riemann--Roch formula \cite{Ver} (see \cite[Conjecture, p.137]{BFM} and \cite[Theorem 18.2]{Fulton-book}), i.e., we have the commutative diagram:\begin{equation}\label{diagram*3}
\xymatrix{
G_0(M)\ar[d]_{td_*^{\op{BFM}}}  && G_0(Y) \ar[ll]_{g^*} \ar[d]^{td_*^{\op{BFM}}} \\
H_*(M)\otimes \mathbb Q && H_*(Y)\otimes \mathbb Q. \ar[ll]^{td(T_g) \cap g^*}
}
\end{equation}
Here $td(T_g)$ is the total Todd class of the relative tangent bundle $T_g$ of the smooth morphism $g$. Hence combining the above two commutative diagrams (\ref{diagram*2}) and (\ref{diagram*3}), for a correspondence $(X \xleftarrow f M \xrightarrow g Y)$ with proper morphism $f$ and smooth morphism $g$, we have the commutative diagrams:
\begin{equation}\label{diagram***4}
\xymatrix{
G_0(X)\ar[d]_{td_*^{\op{BFM}}}  && G_0(M) \ar[ll]_{f_*} \ar[d]_{td_*^{\op{BFM}}} && G_0(Y) \ar[ll]_{g^*} \ar[d]^{td_*^{\op{BFM}}} \\
H_*(X)\otimes \mathbb Q && H_*(M)\otimes \mathbb Q \ar[ll]^{f_*} && H_*(Y)\otimes \mathbb Q. \ar[ll]^{td(T_g) \cap g^*}
}
\end{equation}
\begin{pro}\label{pro-BFM} Define the functors $\mathcal G_0:\op{Corr}(\mathscr V)_{pro-sm} \to \mathscr Ab$ and $\mathcal H^{Todd}_*:\op{Corr}(\mathscr V)_{pro-sm} \to \mathscr Ab$ as follows:
\begin{enumerate}
\item For an object $X \in Obj(\op{Corr}(\mathscr V)_{pro-sm}) = Obj(\mathscr V)$, 
$$\mathcal G_0(X) =G_0(X), \quad \mathcal H^{Todd}_*(X) :=H_*(X)\otimes \mathbb Q.$$
\item For 
$X \xleftarrow f M \xrightarrow g Y \in hom_{\op{Corr}(\mathscr V)_{pro-sm}}(X,Y) = \op{Corr}(X,Y)_{pro-sm}$, 
\begin{align*}
\mathcal G_0(X \xleftarrow f M \xrightarrow g Y) &:=f_*g^*:\mathcal G_0(Y) \to \mathcal G_0(X),  \\
\mathcal H^{Todd}_*(X \xleftarrow f M \xrightarrow g Y) &:= f_* \bigl(td(T_g)\cap g^* \bigr):\mathcal H^{Todd}_*(Y)  \to \mathcal H^{Todd}_*(X).
\end{align*}
\end{enumerate}
Then $\mathcal G_0:\op{Corr}(\mathscr V)_{pro-sm} \to \mathscr Ab$ and $\mathcal H^{Todd}_*:\op{Corr}(\mathscr V)_{pro-sm} \to \mathscr Ab$  are 
functors in the sense of $\mathcal G_0(\alp \circ \be) = \mathcal G_0(\alp) \circ \mathcal G_0(be)$ and $\mathcal H^{Todd}_*(\alp \circ \be) = \mathcal H^{Todd}_*(\alp) \circ \mathcal H^{Todd}_*(be)$, and Baum--Fulton--MacPherson's Todd class transformation $td_*^{\op{BFM}}: G_0(-) \to H_*(-)\otimes \mathbb Q$ is extended to the natural transformation
$$td_*^{\op{BFM}}: \mathcal  G_0(-) \to \mathcal H^{Todd}_*(-).$$
\end{pro}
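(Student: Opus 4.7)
The plan is to verify the two functoriality statements by direct computation using the composition diagram (\ref{product}), and then to assemble the natural transformation from the two commutative squares (\ref{diagram*2}) and (\ref{diagram*3}) which are already recorded just before the statement. Throughout I use that in the fiber square with $h$ proper and $g$ smooth, the pullback $\widetilde g$ is again smooth and $\widetilde h$ is again proper, so all the operations $f_*$, $g^*$, $\widetilde h_*$, $\widetilde g^*$ are well-defined on $G_0$ and on $H_*\otimes\bQ$.

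First I would check that $\mathcal G_0$ is a functor. Given $\alpha=(X\xleftarrow{f}M\xrightarrow{g}Y)$ and $\be=(Y\xleftarrow{h}N\xrightarrow{k}Z)$, one has
\[
\mathcal G_0(\alp\circ\be)=(f\circ\widetilde h)_*(k\circ\widetilde g)^*=f_*\bigl(\widetilde h_*\widetilde g^*\bigr)k^*.
\]
Since $g$ is flat (smooth) and $h$ is proper, the base change formula in $G_0$ gives $\widetilde h_*\widetilde g^*=g^*h_*$, hence $\mathcal G_0(\alp\circ\be)=f_*g^*\circ h_*k^*=\mathcal G_0(\alp)\circ\mathcal G_0(\be)$. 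This is exactly the content of Lemma \ref{lem1} applied to $G_0$, with the caveat that one must restrict the bifunctoriality to proper pushforward and smooth pullback.

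The more delicate step is functoriality of $\mathcal H^{Todd}_*$, because a Todd factor is inserted. The key input is that relative tangent bundles are additive under composition of smooth morphisms and compatible with base change, i.e.\ $T_{k\circ\widetilde g}=T_{\widetilde g}\oplus \widetilde g^*T_k$ and $T_{\widetilde g}=\widetilde h^*T_g$. Multiplicativity of the Todd class then gives
\[
td(T_{k\circ\widetilde g})=\widetilde h^*td(T_g)\cdot \widetilde g^*td(T_k).
\]
Capping with $(k\circ\widetilde g)^*=\widetilde g^*k^*$, applying the projection formula for $\widetilde h$ to pull $\widetilde h^*td(T_g)$ out of $\widetilde h_*$, and then invoking the base change formula $\widetilde h_*\widetilde g^*=g^*h_*$ in Chow homology (which holds here precisely because $g$ is smooth and $h$ is proper), one obtains
\[
(f\circ\widetilde h)_*\bigl(td(T_{k\circ\widetilde g})\cap(k\circ\widetilde g)^*\bigr)
=f_*\bigl(td(T_g)\cap g^*\bigr)\circ h_*\bigl(td(T_k)\cap k^*\bigr),
\]
which is the required identity $\mathcal H^{Todd}_*(\alp\circ\be)=\mathcal H^{Todd}_*(\alp)\circ\mathcal H^{Todd}_*(\be)$. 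I expect this to be the main technical step, the subtlety being the bookkeeping of projection formula, base change, and the tangent-bundle identities in the correct order; these hypotheses are precisely what makes the proper-smooth restriction necessary.

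Finally, for naturality of $td_*^{\op{BFM}}$ between $\mathcal G_0$ and $\mathcal H^{Todd}_*$, I would just paste together the two squares (\ref{diagram*2}) and (\ref{diagram*3}). For a morphism $\alp=(X\xleftarrow f M\xrightarrow g Y)$, the composite rectangle (\ref{diagram***4}) says exactly that
\[
td_*^{\op{BFM}}\circ\bigl(f_*g^*\bigr)=f_*\bigl(td(T_g)\cap g^*\bigr)\circ td_*^{\op{BFM}},
\]
which is naturality of $td_*^{\op{BFM}}:\mathcal G_0\to\mathcal H^{Todd}_*$ evaluated on the morphism $\alp$. Since every morphism in $\op{Corr}(\mathscr V)_{pro-sm}$ is of this form, this completes the extension.
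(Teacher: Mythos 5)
Your proposal is correct and follows essentially the same route as the paper's own proof: the base change formula in $G_0$ via Lemma \ref{lem1} for $\mathcal G_0$, the combination of $T_{k\circ\widetilde g}=T_{\widetilde g}\oplus\widetilde g^*T_k$, $T_{\widetilde g}=\widetilde h^*T_g$, multiplicativity of $td$, the projection formula, and Chow-homology base change for $\mathcal H^{Todd}_*$, and pasting the two commutative squares (\ref{diagram*2})--(\ref{diagram*3}) for naturality. The only cosmetic difference is that you flag explicitly that $G_0$ is only a partial bifunctor (proper pushforward, flat pullback), a point the paper leaves implicit.
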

\begin{proof} Since the naturality of the transformation $td_*^{\op{BFM}}: \mathcal  G_0(-) \to \mathcal H^{Todd}_*(-)$ follows the above commutative diagrams (\ref{diagram***4}), we only have to show the covariance of the functors $\mathcal G_0$ and $\mathcal H^{Todd}_*$.

\begin{enumerate}
\item The functoriality of $\mathcal  G_0(-)$: It is well-known (e.g., see \cite[Lemma 29.5.2 (Flat base change)]{Stack} that the covariant functor $G_0(-)$ of Grothendieck groups of coherent sheaves satisfies the base change formula. Hence, as in the proof of Lemma \ref{lem1}, we see that $\mathcal G_0:\op{Corr}(\mathscr V)_{pro-sm} \to \mathscr Ab$ is a 
functor in the sense of $\mathcal G_0 (\alp \circ \be) = \mathcal G_0(\alp) \circ \mathcal G_0(\be)$.
\item The functoriality of $\mathcal H^{Todd}_*(-)$: First we remark that it is known that the Borel--Moore homology theory\footnote{In an earlier version of the present paper we use the Chow homology theory instead of the Borel--Moore homology theory, since it satisfies the base change formula \cite[Proposition 1.7 and Theorem 1.7]{Fulton-book}. However, J. Sch\"urmann pointed out that the Borel--Moore homology theory satisfies the base change formula.} also satisfies the base change formula (cf. \cite{CG}).

For $\alp = (X \xleftarrow f M \xrightarrow g Y)$ and $\be = (Y \xleftarrow h N  \xrightarrow k Z)$, we have $\alp \circ \be  =  X \xleftarrow {f\circ \widetilde h} M \times_Y N \xrightarrow {k \circ \widetilde g} Z$ (see Definition \ref{product}). Hence
\begin{align*}
& \mathcal H^{Todd}_* (\alp \circ \be)  = \mathcal H^{Todd}_*(X \xleftarrow {f \circ \widetilde{h}} M \times_Y N \xrightarrow {k \circ \widetilde {g}} Z)\\
& = (f \circ \widetilde{h})_* \Bigl (td(T_{k \circ \widetilde {g}}) \cap (k \circ \widetilde {g})^* \Bigr) \\
& = f_* (\widetilde{h})_* \Bigl (\bigl (td(T_{\widetilde g} \cup td((\widetilde g)^*T_k) \bigr) \cap (\widetilde {g})^*k^* \Bigr) \\
& \hspace{2cm} \quad \text{(since $td$ is multiplicative, thus $td(T_{k \circ \widetilde {g}}) = td(T_{\widetilde g}) \cup td((\widetilde g)^*T_k)$)}\\
& = f_* (\widetilde{h})_* \Bigl (td(T_{\widetilde {g}})\cap (\widetilde {g})^*td(T_k) \cap (\widetilde {g})^*k^* \Bigr) \\
& = f_* (\widetilde{h})_* \Bigl (td(T_{\widetilde {g}})\cap (\widetilde {g})^*\bigl (td(T_k) \cap k^* \bigr ) \Bigr) \\
& \hspace{5cm} \quad \text{(by \cite[Theorem 3.2. (d) (Pull-back)]{Fulton-book})}\\
& = f_* (\widetilde{h})_* \Bigl (td((\widetilde {h})^*T_g)\cap (\widetilde {g})^*\bigl (td(T_k) \cap k^* \bigr ) \Bigr) \quad \text{(since $T_{\widetilde {g}} = (\widetilde {h})^*T_g$) }\\
& = f_* (\widetilde{h})_* \Bigl ((\widetilde {h})^*td(T_g)\cap (\widetilde {g})^*\bigl (td(T_k) \cap k^* \bigr ) \Bigr) \\
& = f_* \Bigl (td(T_g)\cap (\widetilde {h})_*(\widetilde {g})^*\bigl (td(T_k) \cap k^* \bigr ) \Bigr) \quad \text{(by the projection formula)} \\
\end{align*}
\begin{align*} 
& = f_* \Bigl (td(T_g)\cap g^*h_*\bigl (td(T_k) \cap k^* \bigr ) \Bigr) \\
& \hspace{4cm} \quad \text{(by  the base change formula $(\widetilde{h})_* (\widetilde {g})^*= g^*h_*$)} \\ 
&  = f_* \Bigl (td(T_g)\cap g^* \Bigl (h_*\bigl (td(T_k) \cap k^* \bigr ) \Bigr) \Bigr) \\
&  = \Bigl (f_* (td(T_g)\cap g^*) \Bigr)  \Bigl (h_*\bigl (td(T_k) \cap k^* \bigr ) \Bigr) \\
& = \mathcal H^{Todd}_*(\alp) \circ \mathcal H^{Todd}_*(\be).
\end{align*}
\end{enumerate}
\end{proof}
\begin{rem} The proof above only uses that the Todd class $c\ell= td$ is a cohomological characteristic class of isomorphism classes of complex algebraic vector bundles, which is functorial for pullbacks $g^*$ and multiplicative, i.e., 
$c\ell(F) = c\ell(F') \cup cl(F'')$
for a short exact sequence $0 \to F' \to F \to F'' \to 0$ of complex algebraic vector bundles.\footnote{Note that if we consider this short exact sequence $0 \to F' \to F \to F'' \to 0$ as that of topological complex vector bundles, forgetting algebraic structure, then it splits, i.e., $F \cong F' \oplus F''$}
 In the proof above we have a short exact sequence $0 \to T_{\widetilde g} \to T_{k \circ \widetilde {g}} \to (\widetilde g)^*T_k \to 0$.
\end{rem}


\begin{thm}\label{thm-BFM}
We define $\mathscr G_0: \mathscr  Corr^+_{pro-sm} \to \mathscr  Ab$ and $\mathscr  H^{Todd}_*: \mathscr  Corr^+_{pro-sm} \to \mathscr  Ab$ as follows: 
\begin{enumerate}
\item For an object $X \in Obj(\mathscr  Corr^+_{pro-sm})$, 

$\mathscr  G_0(X):= G_0(X)$ and $\mathscr  H^{Todd}_*(X):=H_*(X)\otimes \mathbb Q$,
\item For a morphism $\sum_in_i[X \xleftarrow {f_i} M_i \xrightarrow {g_i} Y]  \in hom_{\mathscr  Corr_{pro-sm}^+}(X,Y) := \op{Corr}(X,Y)_{pro-sm}^+, $ 
$$\mathscr  G_0 \Bigl ( \sum_in_i[X \xleftarrow {f_i} M_i \xrightarrow {g_i} Y] \Bigr) := \sum_in_i(f_i)_*(g_i)^*:\mathscr  G_0(Y) \to \mathscr  G_0(X),$$
\begin{align*}
\mathscr  H^{Todd}_*\Bigl (  \sum_in_i[ & X \xleftarrow {f_i}  M_i \xrightarrow {g_i} Y] \Bigr) \\
&:= \sum_in_i(f_i)_* \Bigl(td(T_{g_i})\cap (g_i)^* \Bigr):\mathscr  H^{Todd}_*(Y) \to \mathscr  H^{Todd}_*(X).
\end{align*}
\end{enumerate}
Then $\mathscr G_0: \mathscr  Corr^+_{pro-sm} \to \mathscr  Ab$ and $\mathscr  H^{Todd}_*: \mathscr  Corr^+_{pro-sm} \to \mathscr  Ab$ are 
functors in the sense of $\mathscr G_0(\alp \circ \be) = \mathscr G_0(\alp) \circ \mathscr G_0(be)$ and $\mathscr H^{Todd}_*(\alp \circ \be) = \mathscr H^{Todd}_*(\alp) \circ \mathscr H^{Todd}_*(be)$ and Baum--Fulton--MacPherson's Todd class transformation $td_*^{\op{BFM}}: G_0(-) \to H_*(-)\otimes \mathbb Q$ is extended to the natural transformation
$$td_*^{\op{BFM}}: \mathscr  G_0(-) \to \mathscr  H^{Todd}_*(-).$$
\end{thm}
\begin{proof}
We note that $G_0(-)$and $H_*(-)\otimes \mathbb Q$ 
are additive bifunctors with respect to pushforward $f_*$ for proper morphisms $f$ and pullbacks $g^*$ for smooth morphisms $g$. Then it suffices to show the following equalities:
\begin{align*}
\mathscr  G_0 \Bigl ([X \xleftarrow {f_1 \sqcup f_2}&   M_1 \sqcup M_2  \xrightarrow  {g_1 \sqcup g_2} Y] \Bigr) \\
& = \mathscr  G_0 \Bigl ([X \xleftarrow {f_1} M_1 \xrightarrow {g_1} Y] \Bigr) + \mathscr  G_0 \Bigl ( [X \xleftarrow {f_2} M_2 \xrightarrow {g_2} Y] \Bigr), 
\end{align*}
\begin{align*}
\mathscr  H^{Todd}_*\Bigl ([ &X \xleftarrow {f_1 \sqcup f_2} M_1 \sqcup M_2  \xrightarrow {g_1 \sqcup g_2} Y] \Bigr)\\
& = \mathscr  H^{Todd}_* \Bigl ([X \xleftarrow {f_1} M_1 \xrightarrow {g_1} Y] \Bigr) +  \mathscr  H^{Todd}_* \Bigl ( [X \xleftarrow {f_2} M_2 \xrightarrow {g_2} Y] \Bigr).
\end{align*}
Namely,
\begin{equation}
(f_1 \sqcup f_2)_*(g_1 \sqcup g_2)^* = (f_1)_*(g_1)^* + (f_2)_*(g_2)^*,
\end{equation}
\begin{equation}\label{equ3}
(f_1 \sqcup f_2)_*(td(T_{g_1 \sqcup g_2}) \cap (g_1 \sqcup g_2)^*) = (f_1)_*(td(T_{g_1}) \cap (g_1)^*) + (f_2)_*(td(T_{g_2}) \cap (g_2)^*).
\end{equation}
The proof of these equalities is the same as that of (\ref{equ2}). Here we show only (\ref{equ3}).
\begin{align*}
& (f_1)_*(td(T_{g_1}) \cap (g_1)^*) + (f_2)_*(td(T_{g_2}) \\
& = \Bigl ((f_1)_* pr_1 + (f_2)_*pr_2 \Bigr) \circ \Bigl ( \bigl (td(T_{g_1}) \cap (g_1)^* \bigr ) \oplus \bigl (td(T_{g_2}) \cap (g_2)^* \bigr )  \Bigr )\\
& = \Bigl ((f_1 \sqcup f_2)_* \circ \bigl ((i_1)_* pr_1 + (i_2)_*pr_2 \bigr ) \Bigr) \circ \\
& \hspace{5.5cm} \Bigl (\bigl ((i_1)^* \oplus (i_2)^* \bigr ) \circ \bigl( td(T_{g_1 \sqcup g_2}) \cap (g_1 \sqcup g_2)^* \bigr )\Bigr )\\
& = (f_1 \sqcup f_2)_* \circ \Bigl (\bigl ((i_1)_* pr_1 + (i_2)_*pr_2 \bigr ) \circ ((i_1)^* \oplus (i_2)^* \bigr ) \Bigr ) \circ \\
& \hspace{9cm} \bigl( td(T_{g_1 \sqcup g_2}) \cap (g_1 \sqcup g_2)^* \bigr ) \\
& = (f_1 \sqcup f_2)_* \bigl( td(T_{g_1 \sqcup g_2}) \cap (g_1 \sqcup g_2)^* \bigr ). 
\end{align*}
\end{proof}

\begin{rem} We define the following subcategories of $\op{Corr}(\mathscr V)_{pro-sm}$ and $\mathscr Corr(\mathscr V)_{pro-sm}^+$ :
\begin{align*}
\op{Corr}(\mathscr V)_{pro-id}(X,Y) & := \{X \xleftarrow f Y \xrightarrow {\op{id}_Y} Y \, | \, f \in hom_{\mathscr V}(Y,X) \, \text{proper} \} ,\\
\op{Corr}(\mathscr V)_{id-sm}(X,Y)& :=  \{X \xleftarrow {\op{id}_X} X \xrightarrow g  Y \, | \, g \in hom_{\mathscr V}(X,Y) \, \text{smooth} \},\\
\mathscr  Corr(\mathscr V)^+_{pro-id}(X,Y) & :=  \{[X \xleftarrow f Y \xrightarrow {\op{id}_Y} Y] \, | \, f \in hom_{\mathscr V}(Y,X) \, \text{proper} \},\\
\mathscr  Corr(\mathscr V)^+_{id-sm}(X,Y) & :=  \{[X \xleftarrow {\op{id}_X} X \xrightarrow g  Y] \, | \, g \in hom_{\mathscr V}(X,Y) \, \text{smooth} \}.
\end{align*}
Here we note that 
\begin{align*}
\mathscr  Corr(\mathscr V)^+_{pro-id}(X,Y) & = \op{Corr}(\mathscr V)_{pro-id}^+(X,Y) \\
& = \{[X \xleftarrow f Y ] \, | \, f \in hom_{\mathscr V}(Y,X) \, \text{proper} \},
\end{align*}
\begin{align*}
\mathscr  Corr(\mathscr V)^+_{id-sm}(X,Y) & = \op{Corr}(\mathscr V)_{id-sm}^+(X,Y) \\
& =\{[X \xrightarrow g  Y] \, | \, g \in hom_{\mathscr V}(X,Y) \, \text{smooth} \}.\end{align*}
Then $\mathscr G_0: \mathscr  Corr(\mathscr V)^+_{pro-id} \to \mathscr  Ab$ and $\mathscr  H^{Todd}_*: \mathscr  Corr(\mathscr V)^+_{pro-id} \to \mathscr  Ab$ are covariant functors (in the usual sense) for proper morphisms  and $td_*^{\op{BFM}}: \mathscr  G_0(-) \to \mathscr H^{Todd}_*(-)$ is Baum--Fulton--MacPherson's Todd class transformation. \, $\mathscr G_0:   \mathscr Corr(\mathscr V)^+_{id-sm} \to \mathscr  Ab$  and $\mathscr  H^{Todd}_*: \mathscr  Corr(\mathscr V)^+_{id-sm} \to \mathscr  Ab$ are contravariant functors for smooth morphisms and $td_*^{\op{BFM}}: \mathscr  G_0(-) \to \mathscr H^{Todd}_*(-)$ is the Verdier--Riemann--Roch formula for Baum--Fulton--MacPherson's Todd class transformation.
\end{rem}
\begin{rem}\label{rem-*} The above Verdier-Riemann-Roch formula (\ref{diagram*3}) 
holds for any $\ell.c.i.$ morphism $g:M \to Y$ instead of a smooth morphism $g:M \to Y$. The main reason why we restrict ourselves to smooth morphisms $g:M \to Y$, i.e., considering proper-smooth correspondences instead of proper-$\ell.c.i.$ correspondences $X \xleftarrow f M \xrightarrow g Y$ with proper morphism $f$ and $\ell.c.i.$ morphism $g$ is that a local complete intersection morphism is not necessarily stable under a base change, i.e., in taking the product (see the diagram (\ref{product})
the pull-backed one $\widetilde g:M \times_Y N \to N$ is not necessarily an $\ell.c.i.$ morphism even if $g:M \to Y$ is so. Thus the composite $k \circ \widetilde g: M \times_Y N \to Z$ is not necessarily an $\ell.c.i.$ morphism even if $g:M \to Y$ and $k:N \to Z$ are $\ell.c.i.$ morphisms. We can remedy this drawback, also by considering zigzags instead of correspondences, which we will also discuss later.
\end{rem}
\begin{rem} Given any two kinds of classes of morphisms, we can consider correspondences similar to proper-smooth correspondences.  Let $\mathscr M_1, \mathscr  M_2$ be two classes of morphisms such that they contain all identity morphisms and they are also stable by base change and closed under composition; i.e., 
\begin{enumerate}
\item if $f:X \to Y$ is in the class $\mathscr  M_i$, then for any fiber square
$$
\xymatrix{
X' \ar[r]^{g'} \ar[d]_{f'} & X \ar[d]^f\\
Y' \ar[r]_g & Y
}
$$
$f'$ is in the class $\mathscr  M_i$.
\item if $f:X \to Y$ and $g:Y \to Z$ are in the class $\mathscr M_i$, then the composite $g \circ f$ is also in the class $\mathscr M_i$.
\end{enumerate}
Then a correspondence $X \xleftarrow f V \xrightarrow g Y$ with $f \in \mathscr M_1$ and $g \in \mathscr M_2$ shall be called a \emph{$(\mathscr M_1,\mathscr M_2)$-correspondence} from $X$ to $Y$. Then in a similar manner we can get the category $\op{Corr}(\mathscr V)_{(\mathscr M_1,\mathscr M_2)}$ and the $\mathscr Ab$-enriched category $\mathscr Corr(\mathscr V)^+_{(\mathscr M_1,\mathscr M_2)}$ of $(\mathscr M_1,\mathscr M_2)$-correspondences.
If a functor $E: \mathscr V \to \mathscr Ab$ satisfies that 
\begin{enumerate}
\item the functor $E: \mathscr V \to \mathscr Ab$ is covariant for morphisms in the class $\mathscr M_1$,
\item the functor $E: \mathscr V \to \mathscr Ab$ is contravariant for morphisms in the class $\mathscr M_2$,
\end{enumerate}
then the functor $E: \mathscr V \to \mathscr Ab$ shall be called \emph{a partially bifunctor with respect to $(\mathscr M_1, \mathscr M_2)$}.
Let us suppose that a partially bifunctor $E: \mathscr V \to \mathscr Ab$ with respect to $(\mathscr M_1, \mathscr M_2)$ satisfies 
(the base change formula): 
$$\xymatrix{
A' \ar[d]_{\widetilde h} \ar[r]^{\widetilde g} & A \ar[d]^{h \quad \qquad \Longrightarrow \quad \qquad} \\
B' \ar[r]_g & B,
}
\xymatrix{
E(A') \ar[d]_{(\widetilde h)_*} && E(A) \ar[ll]_{(\widetilde g)^*} \ar[d]^{h_*}\\
E(B') && E(B) \ar[ll]^{g^*}.
}
$$
Here $h \in \mathscr M_1$ and $g \in \mathscr M_2$.
Such a partially bifunctor shall be called \emph{a nice partially bifunctor with respect to $(\mathscr M_1, \mathscr M_2)$}. Let $E_1, E_2:\mathscr V \to \mathscr Ab$ be two nice partially bifunctors with respect to $(\mathscr M_1, \mathscr M_2)$ and let $\tau: E_1 \to E_2$ be a natural transformation. Then, in the same way as above we get the following:
 
Define the functors $\mathcal E_1:\op{Corr}(\mathscr V)_{(\mathscr M_1, \mathscr M_2)} \to \mathscr Ab$ and $\mathcal E_2:\op{Corr}(\mathscr V)_{(\mathscr M_1, \mathscr M_2)} \to \mathscr Ab$ as follows:
\begin{enumerate}
\item For an object $X \in Obj(\op{Corr}(\mathscr V)_{(\mathscr M_1, \mathscr M_2)}) = Obj(\mathscr V)$, 
$$\mathcal E_1(X): =E_1(X), \quad \mathcal E_2(X) :=E_2(X).$$
\item For $X \xleftarrow f M \xrightarrow g Y \in hom_{\op{Corr}(\mathscr V)_{(\mathscr M_1, \mathscr M_2)}}(X,Y) = \op{Corr}(X,Y)_{(\mathscr M_1, \mathscr M_2)}$, 
\begin{align*}
\mathcal E_1(X \xleftarrow f M \xrightarrow g Y) &:=f_*g^*:\mathcal E_1(Y) \to \mathcal E_1(X), \\
\mathcal E_2(X \xleftarrow f M \xrightarrow g Y) &:= f_*g^*:\mathcal E_2(Y) \to \mathcal E_2(X).
\end{align*}
\end{enumerate}
Then $\mathcal E_1:\op{Corr}(\mathscr V)_{(\mathscr M_1, \mathscr M_2)} \to \mathscr Ab$ and $\mathcal E_2:\op{Corr}(\mathscr V)_{(\mathscr M_1, \mathscr M_2)} \to \mathscr Ab$  are 
functors and the natural transformation $\tau: E_1 \to E_2$ is extended to the natural transformation
$\tau: \mathcal  E_1(-) \to \mathcal E_2(-).$

If $E_i$ is additive, then similarly we define $\mathscr E_1: \mathscr  Corr(\mathscr V)^+_{(\mathscr M_1, \mathscr M_2)} \to \mathscr  Ab$ and $\mathscr  E_2: \mathscr  Corr(\mathscr V) ^+_{(\mathscr M_1, \mathscr M_2)} \to \mathscr  Ab$ as follows: 
\begin{enumerate}
\item For an object $X \in Obj(\mathscr  Corr(\mathscr V) ^+_{(\mathscr M_1, \mathscr M_2)})$, 
$$\mathscr  E_1(X):= E_1(X), \quad \mathscr  E_2(X):=E_2(X).$$
\item For a morphism $\sum_in_i[X \xleftarrow {f_i} M_i \xrightarrow {g_i} Y]  \in hom_{\mathscr  Corr(\mathscr V) _{(\mathscr M_1, \mathscr M_2)}^+}(X,Y) := \op{Corr}(X,Y)_{(\mathscr M_1, \mathscr M_2)}^+$ 
\begin{align*}
\mathscr  E_1 \Bigl (\sum_in_i[X \xleftarrow {f_i} M_i \xrightarrow {g_i} Y] \Bigr) &:= \sum_in_i(f_i)_*(g_i)^*:\mathscr  E_1(Y) \to \mathscr  E_1(X),\\
\mathscr  E_2\Bigl (\sum_in_i[X \xleftarrow {f_i} M_i \xrightarrow {g_i} Y] \Bigr) &:= \sum_in_i(f_i)_*(g_i)^* :\mathscr  E_2(Y)\to \mathscr  E_2(X).
\end{align*}
\end{enumerate}
Then $\mathscr E_1: \mathscr  Corr(\mathscr V) ^+_{(\mathscr M_1, \mathscr M_2)} \to \mathscr  Ab$ and $\mathscr  E_2: \mathscr  Corr(\mathscr V) ^+_{(\mathscr M_1, \mathscr M_2)} \to \mathscr  Ab$ are 
functors and the natural transformation $\tau: E_1 \to E_2$ is extended to the natural transformation 
$\tau : \mathscr  E_1(-) \to \mathscr  E_2(-).$
\end{rem}

Baum--Fulton--MacPherson's Riemann--Roch transformation was motivated by MacPherson's Chern class transformation \cite{Mac}, which is the unique natural transformation $c^{\op{Mac}}_*:F(-) \to H_*(-)$ from the covariant functor $F(-)$ of constructible functions to the covariant Borel--Moore homology theory $H_*(-)$, satisfying the ``smooth condition'' that for a smooth variety $X$ the value $c_*^{\op{Mac}}(\jeden_X) = c(TX) \cap [X]$ the Poincar\'e dual of the total Chern class $c(TX)$ of the tangent bundle $TX$
 (see \cite{Fulton-book}). Here $\jeden_X$ is the characteristic function on $X$. 

 For a proper morphism $f:M \to X$ we have the commutative diagram:
\begin{equation}\label{diagram*5}
\xymatrix{
F(X)\ar[d]_{c^{\op{Mac}}_*}  && F(M) \ar[ll]_{f_*} \ar[d]^{c^{\op{Mac}}_*} \\
H_*(X) && H_*(M) \ar[ll]^{f_*}.
}
\end{equation}
For a smooth morphism\footnote{If $g:M \to Y$ is a local complete intersection morphism, this Verdier--Riemann--Roch formula (\ref{diagram*6}) does not hold in general, but there is some defect as proved by J. Sch\"urmann \cite{Sch}.} $g:M \to Y$ we have the following Verdier--Riemann--Roch formula for MacPherson's Chern class transformation \cite{Yokura-VRR}:
\begin{equation}\label{diagram*6}
\xymatrix{
F(M)\ar[d]_{c^{\op{Mac}}_*}  && F(Y) \ar[ll]_{g^*} \ar[d]^{c^{\op{Mac}}_*} \\
H_*(M) && H_*(Y) \ar[ll]^{c(T_g) \cap g^*}.
}
\end{equation}
Here $c(T_g)$ is the total Chern class of the relative tangent bundle $T_g$ of the smooth  morphism $g$.  The pullback $g^*:F(Y) \to F(M)$ is simply the pullback of functions, i.e., for a constructible function $\ga:Y \to \mathbb Z$, $g^*(\ga)$ is defined by $(g^*(\ga))(m):= \ga(g(m))$ for $m \in M$.
Hence combining the above two commutative diagrams (\ref{diagram*5}) and (\ref{diagram*6}), for a correspondence $(X \xleftarrow f M \xrightarrow g Y)$ with proper $f$ and smooth $g$, we have the commutative diagrams:
\begin{equation}\label{diagram*4}
\xymatrix{
F(X)\ar[d]_{c^{\op{Mac}}_*}  && F(M) \ar[ll]_{f_*} \ar[d]_{c^{\op{Mac}}_*} && F(Y) \ar[ll]_{g^*} \ar[d]^{c^{\op{Mac}}_*} \\
H_*(X) && H_*(M) \ar[ll]^{f_*} && H_*(Y) \ar[ll]^{c(T_g) \cap g^*}.
}
\end{equation}
As in the case of Baum--Fulton--MacPherson's Riemann--Roch transformation $td_*^{\op{BFM}}:G_0(X) \to H_*(X)\otimes \mathbb Q$ we obtain the following:
\begin{pro}\label{pro-Mac} Define the functors $\mathcal F:\op{Corr}(\mathscr V)_{pro-sm} \to \mathscr Ab$ and $\mathcal H^{Chern}_*:\op{Corr}(\mathscr V)_{pro-sm} \to \mathscr Ab$ as follows:
\begin{enumerate}
\item For an object $X \in Obj(\op{Corr}(\mathscr V)_{pro-sm}) = Obj(\mathscr V)$, 
$$\mathcal F(X) =F(X), \quad \mathcal H^{Chern}_*(X) :=H_*(X).$$
\item For $X \xleftarrow f M \xrightarrow g Y \in hom_{\op{Corr}(\mathscr V)_{pro-sm}}(X,Y) = \op{Corr}(X,Y)_{pro-sm}$, 
\begin{align*}
\mathcal F(X \xleftarrow f M \xrightarrow g Y) &:=f_*g^*:\mathcal F(Y) \to \mathcal F(X), \\
\mathcal H^{Chern}_*(X \xleftarrow f M \xrightarrow g Y) &:= f_* \bigl(c(T_g)\cap g^* \bigr):\mathcal H^{Chern}_*(Y) \to \mathcal H^{Chern}_*(X).
\end{align*}
\end{enumerate}
Then $\mathcal F:\op{Corr}(\mathscr V)_{pro-sm} \to \mathscr Ab$ and $\mathcal H^{Chern}_*:\op{Corr}(\mathscr V)_{pro-sm} \to \mathscr Ab$  are 
functors in the sense of
$\mathcal F (\alp \circ \be) = \mathcal F(\alp) \circ \mathcal F(\be)$ and $\mathcal H^{Chern}_* (\alp \circ \be) = \mathcal H^{Chern}_*(\alp) \circ \mathcal H^{Chern}_*(\be)$, 
and MacPherson's Chern class transformation $c_*^{\op{Mac}}: F(-) \to H_*(-)$ is extended to the natural transformation
$$c_*^{\op{Mac}}: \mathcal  F(-) \to \mathcal H^{Chern}_*(-).$$
\end{pro}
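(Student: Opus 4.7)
The plan is to mirror the proof of Proposition \ref{pro-BFM} verbatim, replacing the Todd class $td$ by the total Chern class $c$ and the Grothendieck group functor $G_0$ by the constructible function functor $F$. The naturality of the transformation $c_*^{\op{Mac}}:\mathcal F(-) \to \mathcal H^{Chern}_*(-)$ is immediate from the combined commutative diagram (\ref{diagram*4}), so the substantive work is only to verify that $\mathcal F$ and $\mathcal H^{Chern}_*$ are each covariant on $\op{Corr}(\mathscr V)_{pro-sm}$.

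For $\mathcal F$, I would first observe that the pullback $g^*$ on constructible functions is defined pointwise by $(g^*\ga)(m) = \ga(g(m))$, and proper pushforward is defined via Euler characteristics of fibers. A direct fiber-wise computation shows that for any fiber square with $h$ proper the base change identity $(\widetilde h)_*(\widetilde g)^* = g^* h_*$ holds in $F$. Once this is in hand, the argument of Lemma \ref{lem1} applies word-for-word and yields $\mathcal F(\alp \circ \be) = \mathcal F(\alp) \circ \mathcal F(\be)$.

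For $\mathcal H^{Chern}_*$, I would repeat the chain of equalities carried out in the proof of Proposition \ref{pro-BFM}, with $td$ replaced by $c$ throughout. Writing $\alp = (X \xleftarrow f M \xrightarrow g Y)$ and $\be = (Y \xleftarrow h N \xrightarrow k Z)$ with the fiber product diagram (\ref{product}), the key ingredients, invoked in the same order as in the BFM calculation, are: (i) the additivity of relative tangent bundles in a composition, $T_{k \circ \widetilde g} = T_{\widetilde g} \oplus (\widetilde g)^* T_k$, together with the multiplicativity of the total Chern class; (ii) the stability of the relative tangent bundle under smooth base change, $T_{\widetilde g} = (\widetilde h)^* T_g$; (iii) the projection formula \cite[Theorem 3.2(d)]{Fulton-book} applied to $\widetilde h$; and (iv) the base change formula $(\widetilde h)_*(\widetilde g)^* = g^* h_*$ in Chow homology, valid because $g$ is smooth (hence flat) and $h$ is proper, by \cite[Proposition 1.7]{Fulton-book}. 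Substituting $c$ for $td$ changes none of the intermediate identities, and the resulting calculation yields $\mathcal H^{Chern}_*(\alp \circ \be) = \mathcal H^{Chern}_*(\alp) \circ \mathcal H^{Chern}_*(\be)$.

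The main point requiring care is really a bookkeeping matter rather than an obstacle: one must confirm that all the Chow-theoretic formulas (base change, projection formula, and the Cartesian compatibility $T_{\widetilde g} = (\widetilde h)^* T_g$ for relative tangent bundles) remain available in the present proper-smooth setup. These all hold because smoothness is preserved under arbitrary base change and the morphism $g$ being smooth ensures flatness for Chow base change. No new geometric input is needed beyond what was already used for $td_*^{\op{BFM}}$, so the proof reduces to a formal transcription of the earlier argument with the total Chern class in place of the total Todd class.
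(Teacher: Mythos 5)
Your proposal is correct and takes essentially the same approach as the paper. The paper itself gives no separate proof of Proposition~\ref{pro-Mac}: after establishing the commutative diagram (\ref{diagram*4}) it simply states ``As in the case of Baum-Fulton-MacPherson's Riemann-Roch \dots\ we obtain the following,'' leaving the substitution of $c$ for $td$ and $F$ for $G_0$ implicit, exactly as you carry out. Your list of ingredients (additivity of relative tangent bundles, multiplicativity of the total Chern class, stability of $T_g$ under base change, projection formula, and Chow base change for flat/proper) matches the chain of equalities in the paper's proof of Proposition~\ref{pro-BFM} step for step. The only tiny comment is that the base change identity $(\widetilde h)_*(\widetilde g)^* = g^* h_*$ for constructible functions, while conceptually a fiber-wise computation as you say, is a nontrivial fact; the paper points to Ernstr\"om \cite[Prop.~3.5]{Ern} (cf.\ \cite[Lemma~2.4]{EOY}) for it, and it would be tidy to cite that rather than re-derive it.
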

\begin{thm}\label{thm-Mac}
We define
$$\text{$\mathscr F: \mathscr  Corr(\mathscr V)^+_{pro-sm} \to \mathscr  Ab$ and $\mathscr  H^{Chern}_*: \mathscr  Corr(\mathscr V)^+_{pro-sm} \to \mathscr  Ab$}$$
 as follows: 
\begin{enumerate}
\item For an object $X \in Obj(\mathscr  Corr(\mathscr V)^+_{pro-sm})$, 
$$\text{$\mathscr  F(X):= F(X)$ and $\mathscr  H^{Chern}_*(X):=H_*(X)$,}$$
\item For a morphism $\sum_in_i[X \xleftarrow {f_i} M_i \xrightarrow {g_i} Y]  \in hom_{\mathscr  Corr(\mathscr V)_{pro-sm}^+}(X,Y) := \op{Corr}(X,Y)_{pro-sm}^+$ 
$$\mathscr  F \Bigl (\sum_in_i[X \xleftarrow {f_i} M_i \xrightarrow {g_i} Y] \Bigr) := \sum_in_i(f_i)_*(g_i)^*:\mathscr F(Y) \to \mathscr F(X),$$
\begin{align*}
\mathscr  H^{Chern}_*\Bigl (& \sum_in_i[X \xleftarrow {f_i}  M_i \xrightarrow {g_i} Y] \Bigr) \\
& := \sum_in_i(f_i)_* \Bigl(c(T_{g_i}) \cap (g_i)^* \Bigr):\mathscr  H^{Chern}_*(Y) \to \mathscr  H^{Chern}_*(X).
\end{align*}
\end{enumerate}
Then $\mathscr F: \mathscr  Corr(\mathscr V)^+_{pro-sm} \to \mathscr  Ab$ and $\mathscr  H^{Chern}_*: \mathscr  Corr(\mathscr V)^+_{pro-sm} \to \mathscr  Ab$ are 
functors in the sense of
$\mathscr F (\alp \circ \be) = \mathscr F(\alp) \circ \mathscr F(\be)$ and $\mathscr H^{Chern}_* (\alp \circ \be) = \mathscr H^{Chern}_*(\alp) \circ \mathscr H^{Chern}_*(\be)$,
and MacPherson's Chern class transformation $c_*^{\op{Mac}}: F(-) \to H_*(-)$ is extended to the natural transformation
$$c_*^{\op{Mac}}: \mathscr  F(-) \to \mathscr H^{Chern}_*(-).$$
\end{thm}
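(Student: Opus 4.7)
The plan is to parallel the proof of Theorem \ref{thm-BFM} and to reduce this statement to the unenriched version already established in Proposition \ref{pro-Mac}, by verifying that both $F(-)$ and $H_*(-)$ behave as ``nice partially bifunctors'' with respect to the pair (proper, smooth) so that the machinery of Lemma \ref{lem2} applies. Concretely, I would first observe that since a proper-smooth correspondence $X \xleftarrow{f} M \xrightarrow{g} Y$ is composed from a proper push-forward and a smooth pull-back, and both $F$ and $H_*$ admit proper push-forward and (flat/smooth) pull-back, the assignments $f_*g^*$ and $f_*(c(T_g)\cap g^*)$ make sense on the homomorphism sets.

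Next I would establish that these assignments descend to the isomorphism classes. For an isomorphism $h\colon M \xrightarrow{\cong} M'$ compatible with $(f,g)$ and $(f',g')$, we have $f = f'\circ h$ and $g = g'\circ h$. The identities $h_*h^* = \op{id}$ and $h^*h_* = \op{id}$ on both $F$ and $H_*$ follow from the fiber square argument used earlier (with $h$ an iso); combining with $h^* T_{g'} = T_g$ and the projection formula gives
\[
f_*\bigl(c(T_g)\cap g^*\bigr) = f'_* h_* \bigl(h^*c(T_{g'})\cap h^*(g')^*\bigr) = f'_* \bigl(c(T_{g'}) \cap (g')^*\bigr),
\]
and similarly $f_*g^* = f'_*(g')^*$. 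Bilinearity of the disjoint-sum composition $\circ$ on $Corr(X,Y)^+_{pro-sm}$ and additivity of $f_*$, $g^*$, and $c(T_g)\cap(-)$ across disjoint unions then give that the formulas in (2) are well-defined $\mathbb Z$-linear homomorphisms on $Corr(X,Y)_{pro-sm}^+$.

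Functoriality $\mathscr F(\alp\circ\be) = \mathscr F(\alp)\circ\mathscr F(\be)$ and $\mathscr H^{Chern}_*(\alp\circ\be) = \mathscr H^{Chern}_*(\alp)\circ\mathscr H^{Chern}_*(\be)$ on single isomorphism classes is exactly Proposition \ref{pro-Mac}, whose proof carries over verbatim: for $\mathscr F$ one uses the base change formula for constructible functions (the pull-back of functions satisfies $(\widetilde h)_*(\widetilde g)^* = g^*h_*$ in the fiber square of \eqref{product}), and for $\mathscr H^{Chern}_*$ one reproduces the long calculation in Proposition \ref{pro-BFM}, replacing $td$ by $c$ throughout, using the Whitney sum identity $c(T_{k\circ\widetilde g}) = c(T_{\widetilde g})\cdot c((\widetilde g)^*T_k)$, the identity $T_{\widetilde g} = (\widetilde h)^* T_g$, the projection formula, and the base change formula on $H_*$. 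Bilinearity of $\circ$ together with additivity of both assignments extends these identities to arbitrary finite $\mathbb Z$-combinations $\alp = \sum_i n_i[\cdots]$ and $\be = \sum_j m_j[\cdots]$.

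Finally, the natural transformation statement follows from the commutative diagram \eqref{diagram*4}: for each summand $[X\xleftarrow{f_i} M_i \xrightarrow{g_i} Y]$ we have $c^{\op{Mac}}_* \circ (f_i)_*(g_i)^* = (f_i)_*\bigl(c(T_{g_i})\cap(g_i)^*\bigr)\circ c^{\op{Mac}}_*$, and summing with coefficients $n_i$ yields $c^{\op{Mac}}_*\circ \mathscr F(\alp) = \mathscr H^{Chern}_*(\alp)\circ c^{\op{Mac}}_*$. I expect the main subtlety (rather than a true obstacle) to be the base change issue for the Chow homology pulled back along a non-smooth morphism; but since in our setting the right-hand leg $g$ is always smooth, flat base change for $H_*$ and for constructible functions both apply, so the argument goes through just as in the Todd class case. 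If one instead wished to use Borel--Moore homology the base change may fail, and one would need the zigzag formalism of \S 4; this is explicitly flagged in the preceding remark.
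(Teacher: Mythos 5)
Your proposal is correct and follows essentially the same approach the paper takes: the paper presents Theorem 3.14 without an explicit proof, introducing it with the phrase ``As in the case of Baum-Fulton-MacPherson's Riemann-Roch,'' and your argument correctly spells out that analogy — functoriality on single classes by the same computation as Proposition 3.8 with $td$ replaced by $c$ (using the base change for constructible functions from Ernstr\"om and for Chow homology from Fulton), well-definedness on isomorphism classes via the isomorphism condition $h_*h^* = \op{id}$ together with $T_g \cong h^*T_{g'}$ and the projection formula, and extension to $\mathbb Z$-linear combinations by bilinearity of $\circ$.
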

\begin{rem} 
$\mathscr F: \mathscr  Corr(\mathscr V)^+_{pro-id} \to \mathscr  Ab$ and $\mathscr  H^{Chern}_*: \mathscr  Corr(\mathscr V)^+_{pro-id} \to \mathscr  Ab$ are covariant functors (in the usual sense) for proper morphisms  and $c_*^{\op{Mac}}: \mathscr  F(-) \to \mathscr H^{Chern}_*(-)$ is MacPherson's Chern class transformation. \, \, $\mathscr F: \mathscr  Corr(\mathscr V)^+_{id-sm} \to \mathscr  Ab$ and $\mathscr  H^{Chern}_*: \mathscr  Corr(\mathscr V)^+_{id-sm} \to \mathscr  Ab$ are contravariant functors for smooth morphisms and $c_*^{\op{Mac}}: \mathscr  F(-) \to \mathscr H^{Chern}_*(-)$ is the Verdier--Riemann--Roch formula for MacPherson's Chern class transformation.
\end{rem}
\begin{rem} $\mathcal F(X \xleftarrow f M \xrightarrow g Y) :=f_*g^*:F(Y) \to F(X)$ is called a topological Radon transformation of constructible functions \cite{Ern, EOY, Scha}. The base change formula for constructible functions is proved in \cite[Proposition 3.5]{Ern} (cf. \cite[Lemma 2.4]{EOY}). $\mathcal H^{Chern}_*(X \xleftarrow f M \xrightarrow g Y) := f_* \bigl(c(T_g)\cap g^* \bigr):H_*(Y)  \to H_*(X)$ for compact smooth manifolds $X,Y,M$ is called the Verdier--Radon transformation in \cite{EOY} (cf. \cite{Yokura-Kagoshima}). Here $f$ and $g$ can be any morphism and $T_g:=TM - g^*TY$ is the virtual relative tangent bundle.
\end{rem}
In \cite{Looi} E. Looijenga defines the relative Grothendieck group $K_0(\mathscr V/X)$ as the free abelian group generated by the isomorphism classes $[V \xrightarrow h X]$ of a morphism $h:V \to X$ modulo the relation 
 $$[V \xrightarrow h X] = [W \xrightarrow {h|_W} X] + [V \setminus W \xrightarrow {h|_{V \setminus W}} X]$$
  for a closed subvariety $W \subset V$.
For a morphism $f:X \to Y$ the pushforward 
$$f_*:K_0(\mathscr V/X) \to K_0(\mathscr V/Y)$$
 is defined by $f_*([V \xrightarrow h X]):=[V \xrightarrow {f \circ h} Y]$ and clearly $(g \circ f)_* =g_* \circ f_*$, namely $K_0(\mathscr V/-)$ is a covariant functor. For a morphism $g:X' \to X$, the pullback 
 $$g^*:K_0(\mathscr V/X) \to K_0(\mathscr V/X')$$
  is defined by $g^*([V \xrightarrow h X]):= [V' \xrightarrow {h'} X']$ where we use the following fiber square:
$$
\xymatrix{
V' \ar[r]^{g'} \ar[d]_{h'}& V \ar[d]^h\\
X' \ar[r]_g & X.
}
$$
Then it is clear that for morphisms $g:X' \to X$ and $f:X'' \to X$ we have $(g \circ f)^* = f^* \circ g^*$, thus it is a contravariant functor. We observe that the functor $K_0(\mathscr V/-)$ satisfies the base change formula:
$$\xymatrix{
A' \ar[d]_{\widetilde h} \ar[r]^{\widetilde g} & A \ar[d]^{h \quad \qquad \Longrightarrow \quad \qquad} \\
B' \ar[r]_g & B,
}
\xymatrix{
K_0(\mathscr V/A') \ar[d]_{(\widetilde h)_*} && K_0(\mathscr V/A) \ar[ll]_{(\widetilde g)^*} \ar[d]^{h_*}\\
K_0(\mathscr V/B') && K_0(\mathscr V/B) \ar[ll]^{g^*}. 
}
$$ 
This follows from considering the following fiber squares: for $[V \xrightarrow f A] \in K_0(\mathscr V/A)$
$$
\xymatrix{
V' \ar[r]^{\widetilde g'} \ar[d]_{f'}& V \ar[d]^f\\
A' \ar[r]^{\widetilde g} \ar[d]_{\widetilde h}& A \ar[d]^h\\
B' \ar[r]_g & B.
}
$$
In \cite{BSY} we showed that there exists a unique natural transformation
$${T_y}_*: K_0(\mathscr V/X) \to H_*(X)\otimes \mathbb Q[y]$$
such that for a nonsingular variety $X$, ${T_y}_*([X \xrightarrow {\op{id}_X} X]) = T_y(TX) \cap [X]$.
Here $T_y(TX)$ is the Hirzebruch class of the tangent bundle $TX$. The Hirzebruch class $T_y(E)$ of a complex vector bundle is 
$$T_{y}(E):= \prod_{i=1}^{\op{dim} X} \Biggl (\frac{\alpha_i(1+y)}{1-e^{-\alpha_i(1+y)}} -\alpha_i y \Biggr)$$
where $\alpha_{i}$ are the Chern roots of the tangent bundle $E$.
\begin{rem} The Hirzebruch class $T_{y}(E)$ unifies the
following three distinguished and important characteristic cohomology classes of $TX$:
\begin{enumerate}
\item $(y=-1)$: $c(TX) = \prod_{i=1}^{\op{dim} X}  (1+\alpha)$ the total Chern class, 
\item $(y=\, \, \, \, 0)$: $td(TX) = \prod_{i=1}^{\op{dim} X} \frac{\alpha}{1-e^{-\alpha}}$ the total Todd class, 
\item $(y= \, \, \, \, 1)$: $ L(TX) = \prod_{i=1}^{\op{dim} X}  \frac{\alpha}{\tanh \alpha}$ the total Thom--Hirzebruch $L$-class. 
\end{enumerate}
\end{rem}
The natural transformation ${T_y}_*: K_0(\mathscr V/X) \to H_*(X)\otimes \bQ[y]$ is called \emph{the motivic Hirzebruch class}. We also have the Verdier--Riemann--Roch formula for the motivic Hirzebruch class for a smooth morphism (\cite{BSY}). Thus for a correspondence $(X \xleftarrow f M \xrightarrow g Y)$ with proper morphism $f$ and smooth morphism $g$, we have the following commutative diagrams:
\begin{equation}\label{diagram4}
\xymatrix{
K_0(\mathscr V/X)\ar[d]_{T_{y*}}  && K_0(\mathscr V/M) \ar[ll]_{f_*} \ar[d]_{T_{y*}} && K_0(\mathscr V/Y) \ar[ll]_{g^*} \ar[d]^{T_{y*}} \\
H_*(X)\otimes \mathbb Q[y] && H_*(M)\otimes \mathbb Q[y] \ar[ll]^{f_*} && H_*(Y)\otimes \mathbb Q[y] \ar[ll]^{T_y(T_g) \cap g^*}.
}
\end{equation}
Here $T_y(T_g)$ is the Hirzebruch class of the relative tangent bundle of the smooth morphism $g:M \to Y$.

Thus, as in the above discussion, we obtain the following:
\begin{pro} Define the functors $\mathcal K_0(\mathscr V/-):\op{Corr}(\mathscr V)_{pro-sm} \to \mathscr Ab$ and $\mathcal H^{Hirz}_*:\op{Corr}(\mathscr V)_{pro-sm} \to \mathscr Ab$ as follows:
\begin{enumerate}
\item For an object $X \in Obj(\op{Corr}(\mathscr V)_{pro-sm}) = Obj(\mathscr V)$, 
$$\mathcal K_0(\mathscr V/ -)(X) =K_0(\mathscr V/X), \quad \mathcal H^{Hirz}_*(X) :=H_*(X)\otimes \mathbb Q[y].$$
\item For $X \xleftarrow f M \xrightarrow g Y \in hom_{\op{Corr}(\mathscr V)_{pro-sm}}(X,Y) = \op{Corr}(X,Y)_{pro-sm}$, 
\begin{align*}
\mathcal K_0(\mathscr V-)(X \xleftarrow f M \xrightarrow g Y) &:=f_*g^*:\mathcal K_0(\mathscr V/ -)(Y) \to \mathcal K_0(\mathscr V/ -)(X),\\
\mathcal H^{Hirz}_*(X \xleftarrow f M \xrightarrow g Y) &:= f_* \Bigl(T_y(T_g)\cap g^* \Bigr):\mathcal H^{Hirz}_*(Y) \to \mathcal H^{Hirz}_*(X).
\end{align*}
\end{enumerate}
Then $\mathcal K_0(\mathscr V-):\op{Corr}(\mathscr V)_{pro-sm} \to \mathscr Ab$ and $\mathcal H^{Hirz}_*:\op{Corr}(\mathscr V)_{pro-sm} \to \mathscr Ab$  are 
functors and the motivic Hirzebruch class transformation ${T_y}_*: K_0(\mathscr V/X) \to H_*(X)\otimes \bQ[y]$ is extended to the natural transformation
$$T_{y*}: \mathcal K_0(\mathscr V-) \to \mathcal H^{Hirz}_*(-).$$
\end{pro}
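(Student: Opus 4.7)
The plan is to follow the blueprint of Proposition \ref{pro-BFM} almost verbatim, replacing the Todd class $td$ by the Hirzebruch class $T_y$ throughout; the naturality diagram (\ref{diagram4}) already packages the pointwise ingredients, so the work is to verify the functoriality of the two assignments on composites of correspondences.

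First, for $\mathcal K_0(\mathscr V/-)$, the argument is a direct appeal to Lemma \ref{lem1}: the excerpt has already verified that $K_0(\mathscr V/-)$ is a totally bifunctor satisfying the base change formula, so if $\alpha=(X\xleftarrow{f}M\xrightarrow{g}Y)$ and $\beta=(Y\xleftarrow{h}N\xrightarrow{k}Z)$, then using diagram (\ref{product}) one computes
\begin{equation*}
\mathcal K_0(\mathscr V/-)(\alpha\circ\beta) = (f\circ\widetilde h)_*(k\circ\widetilde g)^* = f_*(\widetilde h_*\widetilde g^*)k^* = f_*(g^*h_*)k^* = \mathcal K_0(\mathscr V/-)(\alpha)\circ\mathcal K_0(\mathscr V/-)(\beta),
\end{equation*}
where the base change equality $\widetilde h_*\widetilde g^*=g^*h_*$ is exactly the one recorded in the excerpt.

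Next, for $\mathcal H^{Hirz}_*$, I would mimic step-by-step the calculation in the proof of Proposition \ref{pro-BFM}, using the following four ingredients in sequence: (i) for the composition $k\circ\widetilde g$ of smooth morphisms one has the short exact sequence of relative tangent bundles, hence $T_{k\circ\widetilde g} = T_{\widetilde g}\oplus\widetilde g^*T_k$; (ii) the Hirzebruch class is multiplicative for direct sums, so $T_y(T_{\widetilde g}\oplus\widetilde g^*T_k)=T_y(T_{\widetilde g})\cap\widetilde g^*T_y(T_k)$, together with naturality of characteristic classes under pullback; (iii) the base change identity $T_{\widetilde g}=\widetilde h^*T_g$ coming from the fiber square; (iv) the projection formula $\widetilde h_*(\widetilde h^*\eta\cap\zeta)=\eta\cap\widetilde h_*\zeta$ and the Chow-homology base change $\widetilde h_*\widetilde g^*=g^*h_*$. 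Chaining these exactly as in the Todd case yields
\begin{equation*}
\mathcal H^{Hirz}_*(\alpha\circ\beta) = f_*\Bigl(T_y(T_g)\cap g^*\bigl(h_*(T_y(T_k)\cap k^*)\bigr)\Bigr) = \mathcal H^{Hirz}_*(\alpha)\circ\mathcal H^{Hirz}_*(\beta).
\end{equation*}

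Finally, for the naturality of $T_{y*}$ on the enriched category, I would paste together the two halves of diagram (\ref{diagram4}): the left square is the naturality of $T_{y*}$ for the proper pushforward $f_*$, and the right square is the Verdier-Riemann-Roch for the smooth morphism $g$ (both recorded in the excerpt just before the statement). The composite square is precisely the identity
\begin{equation*}
T_{y*}\circ\mathcal K_0(\mathscr V/-)(X\xleftarrow{f}M\xrightarrow{g}Y) = \mathcal H^{Hirz}_*(X\xleftarrow{f}M\xrightarrow{g}Y)\circ T_{y*},
\end{equation*}
which is what we need on generators of $\op{Corr}(\mathscr V)_{pro-sm}$. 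The main obstacle, as in Proposition \ref{pro-BFM}, is bookkeeping in the covariance calculation for $\mathcal H^{Hirz}_*$, in particular having to invoke the Chow-homology base change formula (rather than the Borel-Moore one); here the Hirzebruch class, being a $\mathbb Q[y]$-valued polynomial in Chern roots, introduces no new difficulty beyond what already appears for $td$, so the argument is purely a transcription with $td\rightsquigarrow T_y$ and $\mathbb Q\rightsquigarrow\mathbb Q[y]$.
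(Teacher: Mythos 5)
Your proposal is correct and takes essentially the same approach as the paper: the paper obtains this proposition by remarking ``as in the above discussion,'' i.e., by transcribing the proof of Proposition~\ref{pro-BFM} with $td\rightsquigarrow T_y$ and $\mathbb Q\rightsquigarrow\mathbb Q[y]$, using the already-verified base change formula for $K_0(\mathscr V/-)$ and the commutative diagram~(\ref{diagram4}), which is exactly what you do.
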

\begin{thm}
We define $\mathscr K_0(\mathscr V/-): \mathscr  Corr(\mathscr V)^+_{pro-sm} \to \mathscr  Ab$ and $\mathscr  H^{Hirz}_*: \mathscr  Corr(\mathscr V) ^+_{pro-sm} \to \mathscr  Ab$ as follows: 
\begin{enumerate}
\item For an object $X \in Obj(\mathscr  Corr(\mathscr V) ^+_{pro-sm})$, 
$$\text{$\mathscr K_0(\mathscr V/-)(X):= F(X)$ and $\mathscr  H^{Hirz}_*(X):=H_*(X)\otimes \mathbb Q[y]$,}$$
\item For a morphism $\sum_in_i[X \xleftarrow {f_i} M_i \xrightarrow {g_i} Y]  \in hom_{\mathscr  Corr(\mathscr V) _{pro-sm}^+}(X,Y) := \op{Corr}(X,Y)_{pro-sm}^+$ 
\begin{align*}
\mathscr K_0(\mathscr V/-) \Bigl (& \sum_in_i[X \xleftarrow {f_i} M_i \xrightarrow {g_i} Y] \Bigr) \\
&:= \sum_in_i(f_i)_*(g_i)^*:\mathscr K_0(\mathscr V/-)(Y) \to \mathscr K_0(\mathscr V/-)(X), 
\end{align*}
\begin{align*}
\mathscr  H^{Hirz}_*\Bigl (& \sum_in_i[X \xleftarrow {f_i} M_i \xrightarrow {g_i} Y] \Bigr) \\
&:= \sum_in_i(f_i)_* \Bigl(T_y(T_{g_i})\cap (g_i)^* \Bigr):\mathscr  H^{Hirz}_*(Y)  \to \mathscr  H^{Hirz}_*(X).
\end{align*}
\end{enumerate}
Then $\mathscr K_0(\mathscr V/-): \mathscr  Corr(\mathscr V) ^+_{pro-sm} \to \mathscr  Ab$ and $\mathscr  H^{Hirz}_*: \mathscr  Corr(\mathscr V) ^+_{pro-sm} \to \mathscr  Ab$ are 
functors and the motivic Hirzebruch class transformation ${T_y}_*: K_0(\mathscr V/X) \to H_*(X)\otimes \bQ[y]$ is  extended to the natural transformation
$$T_{y*}: \mathscr K_0(\mathscr V/-) \to \mathscr H^{Hirz}_*(-).$$
\end{thm}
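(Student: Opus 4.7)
The plan is to mirror the proofs of Proposition~\ref{pro-BFM} and Theorem~\ref{thm-BFM} essentially verbatim, replacing the Todd class $td$ by the Hirzebruch class $T_y$ throughout. First I would prove the unenriched analogue on $\op{Corr}(\mathscr V)_{pro-sm}$: namely, that $\mathcal K_0(\mathscr V/-)$ and $\mathcal H^{Hirz}_*$ are both covariant functors on $\op{Corr}(\mathscr V)_{pro-sm}$ and that $T_{y*}$ extends to a natural transformation between them. Covariance of $\mathcal K_0(\mathscr V/-)$ is immediate from Lemma~\ref{lem1}, since the pushforward/pullback were shown functorial in the excerpt and the base change formula for $K_0(\mathscr V/-)$ was verified there by compatibility of iterated fiber squares; and $K_0(\mathscr V/-)$ clearly satisfies the isomorphism condition of Lemma~\ref{lem2}.

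For the covariance of $\mathcal H^{Hirz}_*$, I would run the same calculation as in part (2) of the proof of Proposition~\ref{pro-BFM}, using the following standard properties of the Hirzebruch class, all of which follow from the definition via Chern roots: (i) multiplicativity in short exact sequences, so that $T_y(T_{k\circ\widetilde g})=T_y(T_{\widetilde g})\cdot \widetilde g^{*}T_y(T_k)$ from the relation $T_{k\circ\widetilde g}=T_{\widetilde g}\oplus \widetilde g^{*}T_k$ on the smooth composite; (ii) naturality under pullback, $T_y(\widetilde h^{*}T_g)=\widetilde h^{*}T_y(T_g)$, combined with the identification $T_{\widetilde g}=\widetilde h^{*}T_g$ for the smooth base change; together with the projection formula and the base change formula for Chow homology (\cite{Fulton-book}, Thm.~3.2 and Prop.~1.7). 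With these ingredients, the chain of equalities in the proof of Proposition~\ref{pro-BFM} carries over mutatis mutandis and yields $\mathcal H^{Hirz}_*(\alpha\circ\beta)=\mathcal H^{Hirz}_*(\alpha)\circ \mathcal H^{Hirz}_*(\beta)$. The naturality of $T_{y*}\colon \mathcal K_0(\mathscr V/-)\to \mathcal H^{Hirz}_*(-)$ on $\op{Corr}(\mathscr V)_{pro-sm}$ is then read off from the three-square commutative diagram (\ref{diagram4}).

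Passage from $\op{Corr}(\mathscr V)_{pro-sm}$ to the enriched category $\mathscr{Corr}(\mathscr V)^+_{pro-sm}$ proceeds by the same device used in Lemma~\ref{lem2} and Theorem~\ref{thm-BFM}: well-definedness on isomorphism classes of proper-smooth correspondences follows because for an isomorphism $h\colon M\xrightarrow{\cong}M'$ intertwining $(f,g)$ with $(f',g')$ one has $h_*h^*=\op{id}$ and $h^*h_*=\op{id}$ on both $K_0(\mathscr V/-)$ and $H_*(-)\otimes\mathbb Q[y]$ (applied to the evident fiber square with two identity edges), so $f_*g^*=(f')_*(g')^*$ and likewise $f_*(T_y(T_g)\cap g^*)=(f')_*(T_y(T_{g'})\cap (g')^*)$ using additionally the naturality $T_y(h^{*}T_{g'})=h^{*}T_y(T_{g'})$ and $T_{g}=h^{*}T_{g'}$. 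Bilinearity of both assignments on the Grothendieck completion is then formal, and the natural transformation $T_{y*}$ extends by linearity.

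The main potential obstacle is conceptually just the verification that the calculation in $\mathcal H^{Hirz}_*$ really uses only multiplicativity and pullback-naturality of $T_y$ in the same pattern as $td$; once those two properties are in hand, no feature of $td$ specific to Riemann--Roch is used, so the whole argument of Theorem~\ref{thm-BFM} transports without modification. No new ideas are required beyond confirming that the Verdier--Riemann--Roch formula for the motivic Hirzebruch class under smooth morphisms, recorded as the right-hand square of (\ref{diagram4}), combines with the covariance of $K_0(\mathscr V/-)$ and $H_*(-)\otimes\mathbb Q[y]$ for proper maps to yield the three-term commutativity that drives both the covariance and the naturality assertions.
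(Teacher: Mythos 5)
Your proposal is correct and follows essentially the same route the paper takes: the paper itself gives no separate proof for this theorem, instead stating (just before Proposition~3.19) that the argument ``as in the above discussion'' carries over, meaning the proof of Proposition~\ref{pro-BFM} with $td$ replaced by $T_y$ (multiplicativity and pullback-naturality of $T_y$, projection formula, Chow base change) plus the Lemma~\ref{lem2} device for well-definedness on isomorphism classes and linear extension to the Grothendieck completion. Your enumeration of the exact ingredients — including the observation that nothing about $td$ beyond multiplicativity and naturality is used in the covariance calculation — is precisely the verification the paper leaves implicit.
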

\begin{rem} These two functors $\mathscr K_0(\mathscr V/-): \mathscr  Corr(\mathscr V)^+_{pro-id} \to \mathscr  Ab$ and $\mathscr  H^{Hirz}_*: \mathscr  Corr(\mathscr V)^+_{pro-id} \to \mathscr  Ab$ are covariant functors (in the usual sense) for proper morphisms  and $T_{y*}: \mathscr  G_0(-) \to \mathscr H^{Todd}_*(-)$ is the motivic Hirzebruch class transformation. $\mathscr K_0(\mathscr V/-): \mathscr  Corr(\mathscr V)^+_{id-sm} \to \mathscr  Ab$ and $\mathscr  H^{Hirz}_*: \mathscr  Corr(\mathscr V)^+_{id-sm} \to \mathscr  Ab$ are contravariant functors for smooth morphisms and $T_{y*}: \mathscr K_0(\mathscr V/-) \to \mathscr H^{Todd}_*(-)$ becomes the Verdier--Riemann--Roch formula for the motivic Hirzebruch class transformation.
\end{rem}
\begin{rem}
The motivic Hirzebruch class ${T_y}_*: K_0(\mathscr V/X) \to H_*(X)\otimes \bQ[y]$ ``unifies" the above MacPherson's Chern class transformation $c_*^{\op{Mac}}$, Baum--Fulton--MacPherson's Todd class transformation $td_*^{\op{BFM}}$ and Cappell--Shaneson's $L$-class transformation $L_*^{\op{CS}}:\Omega(-) \to H_*(-)\otimes \mathbb Q$ (see below) in the sense that we have the following commutative diagrams:
$$\xymatrix{
& K_0(\Cal V/X)  \ar [dl]_{\epsilon} \ar [dr]^{{T_{-1}}_*} \\
{F(X) } \ar [rr] _{c_*^{\op{Mac}}\otimes \mathbb Q}& &  H_*(X)\otimes \bQ,}
$$
$$\xymatrix{
&  K_0(\Cal V/X)  \ar [dl]_{\Gamma} \ar [dr]^{{T_{0}}_*} \\
{G_0(X) } \ar [rr] _{td_*^{\op{BFM}}}& &  H_*(X)\otimes \bQ,}
$$
$$\xymatrix{
& K_0(\Cal V/X)  \ar [dl]_{\omega} \ar [dr]^{{T_{1}}_*} \\
{\Omega(X) } \ar [rr] _{L_*^{\op{CS}}}& &  H_*(X)\otimes \bQ.}
$$
Goresky--MacPherson's homology $L$-class \cite{GM}, which is extended as a natural transformation by S. Cappell and J. Shaneson \cite{CS} (also see \cite{Yokura-TAMS}):\emph{There exists a unique natural transformation 
$$L_*^{\op{CS}}: \Omega(X) \to  H_*(X)\otimes \bQ$$
such that for a nonsingular compact variety $X$, $L_*(\mathbb Q_X[2 \op{dim}X]) = L(TX) \cap [X]$.}
Here $\Omega$ is the covariant functor assigning to $X$ the cobordism group $\Omega(X)$ of self-dual constructible sheaf complexes on compact $X$.
As for the case of Cappell--Shaneson's $L$-class $L_*^{\op{CS}}:\Omega(-) \to H_*(-)\otimes \mathbb Q$ we do not know whether we can have the Verdier--Riemann--Roch formula for a smooth morphism. Thus, unlike the cases of MacPherson's Chern class, Baum--Fulton--MacPherson's Todd class and the motivic Hirzebruch class, at the moment we cannot define $L_*^{\op{CS}}: \Omega(-)  \to \mathscr  H^{L-class}_*(-)$ in a similar manner to $c^{\op{Mac}}_*: \mathscr  F(-)\to \mathscr  H^{Chern}_*(-)$, $td_*^{\op{BFM}}: \mathscr  G_0(-) \to \mathscr  H^{Todd}_*(-)$ and $T_{y*}: \mathscr K_0(\mathscr V/-) \to \mathscr H^{Hirz}_*(-).$
\end{rem}
\begin{rem} The motivic Chern class transformation $mC_y: K_0(\mathcal V/-) \to G_0(-)\otimes \mathbb Z[y]$ and the motivic Hodge Chern class transformation $MHC_y:K_0(MHM(-)) \to G_0(-)\otimes \mathbb Z[y, y^{-1}]$ also satisfy the Verdier--Riemann--Roch formula for smooth morphisms (see \cite{BSY}). Thus we can get the same formulations for these transformations. Here $K_0(MHM(X))$ is the Grothendieck group of the derived category of mixed Hodge modules on $X$.
\end{rem}
\begin{rem}
In \cite{LP} Levine and Pandharipande show that Levine--Morel's algebraic cobordism $\Omega_*(X)$ can be obtained as a quotient group of the Grothendieck group $\mathcal M^+(X) =\op{Corr}(X,pt)^+_{pro-sm}$ via the double-point degeneration relation. If we can get some quotient group $B\Omega_*(X,Y) :=\widetilde {\op{Corr}(X,Y)^+_{pro-sm}}$ of $\op{Corr}(X,Y)^+_{pro-sm}$ via some analogous manner as in \cite{LP}, in such a way that 
\begin{enumerate}
\item when $Y$ is a point, $B\Omega_*(X,pt) \cong \Omega_*(X)$, 
\item $\circ: \op{Corr}(X,Y)^+_{pro-sm} \times \op{Corr}(Y,Z)^+_{pro-sm} \to \op{Corr}(X,Z)^+_{pro-sm}$ is extended to
$$\circ : B\Omega_*(X,Y) \times B\Omega_*(Y,Z) \to B\Omega_*(X,Z),$$
\end{enumerate}
then we would call $B\Omega_*(X,Y)$ \emph{a bi-variant algebraic cobordism of bicycles}, which is treated in \cite{Yokura-bicycle}. Then we would get an $\mathscr Ab$-enriched category $\mathscr B\Omega_*(\mathscr V)$ of algebraic cobordism of bicycles such that
\begin{enumerate}
\item $Obj(\mathscr B\Omega_*(\mathscr V)) = Obj(\mathscr V)$,
\item For two objects $X$ and $Y$, $hom_{\mathscr B\Omega_*(\mathscr V)}(X,Y) = B\Omega_*(X,Y)$.
\end{enumerate}
and we also could consider whether one can extend characteristic classes of singular varieties to the enriched categories of algebraic cobordism of bicycles. 
\end{rem}

\section{Enriched categories of zigzags and \\ characteristic classes of singular varieties}

A $\ell.c.i$ morphism $f:X \to Y$ is a regular embedding $r:X \to P$ followed by a smooth morphism $p:P \to Y$; $f= p \circ r$ (see \cite{Fulton-book}). For the context of $\ell.c.i$ morphisms we assume as in \cite{Fulton-book} that all varieties have a closed embedding into a smooth variety (e.g. quasi-projective varieties), so that the composition of $\ell.c.i$ morphisms is again a $\ell.c.i$ morphism. The virtual tangent bundle $T_f$ of a $\ell.c.i$ morphism $f:X \to Y$ is defined to be $r^*T_p -N_XP \in K^0(X)$, which has similar properties as the fiber tangent bundle $T_f$ of a smooth morphism $f$ (for more details see \cite{Fulton-book}).

The Verdier--Riemann--Roch formula for Baum--Fulton--MacPherson's Todd class transformation holds for $\ell.c.i$ morphisms (see \cite{BFM}). A smooth morphism is also a $\ell.c.i$ morphism. Hence, similarly we can consider \emph{a proper-$\ell.c.i.$ correspondence} $X \xleftarrow f M  \xrightarrow g Y$ with proper morphism $f$ and $\ell.c.i.$-morphism $g$. Unfortunately, the pullback of an $\ell.c.i.$-morphism in a fiber square is not necessarily an $\ell.c.i.$-morphism. Hence, we cannot do the same argument for proper-$\ell.c.i.$ correspondences. To remedy this drawback, we use \emph{zigzags} instead of correspondences.
\begin{defn}
The following finite sequence of correspondences is called a \emph{$k$-zigzag} or a \emph{$k$-correspondence} of complex algebraic varieties:
{\tiny
$$\xymatrix{
& M_1 \ar[dl]_{f_1} \ar[dr]^{g_1} & & M_2\ar[dl]_{f_2} \ar[dr]^{g_2} & & & M_k\ar[dl]_{f_k} \ar[dr]^{g_k} \\
X=X_0 & &   X_1 & & X_2 \cdots  & \cdots X_{k-1} & &  X_k=Y.
}
$$
}

The set of all zigzags of finite length from $X$ to $Y$ is denoted by $Zigzag(X,Y)$.
\end{defn}
\begin{lem} For two zigzags 
$$\alp =(X \xleftarrow {f_1} M_1 \xrightarrow {g_1} X_1 \cdots \cdots X_{i-1}\xleftarrow {f_i} M_i \xrightarrow {g_i} Y) \in Zigzag(X,Y),$$ 
$$\be =(Y \xleftarrow {h_1} N_1 \xrightarrow {k_1} Y_1 \cdots \cdots Y_{j-1} \xleftarrow {h_j} N_j \xrightarrow {k_j} Z) \in Zigzag(Y,Z),$$ 
we define the product $\alp \wedge \be$ by juxtaposition:
\begin{align*}
& \alp \wedge \be := \\
&(X \xleftarrow {f_1} M_1 \xrightarrow {g_1} X_1 \cdots  X_{i-1}\xleftarrow {f_i} M_i \xrightarrow {g_i} Y \xleftarrow {h_1} N_1 \xrightarrow {k_1} Y_1 \cdots  Y_{j-1} \xleftarrow {h_j} N_j \xrightarrow {k_j} Z).
\end{align*}
Then the juxtaposition $\wedge$ is well-defined:
$$\wedge: Zigzag(X,Y) \times Zigzag(Y,Z) \to Zigzag(X, Z).$$
\end{lem}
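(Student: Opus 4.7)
The plan is to verify that the juxtaposition produces a bona fide zigzag from $X$ to $Z$ and that the construction depends only on $\alpha$ and $\beta$, not on any auxiliary choices. Since zigzags are defined purely as finite sequences of correspondences with matching endpoints, the proof is essentially a bookkeeping check of source/target compatibility.

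First I would observe that $\alpha$ is an $i$-zigzag whose left-most object is $X_0 = X$ and whose right-most object is $X_i = Y$, and $\beta$ is a $j$-zigzag whose left-most object is $Y_0 = Y$ and whose right-most object is $Y_j = Z$. Consequently, the concatenated sequence
\begin{equation*}
X \xleftarrow{f_1} M_1 \xrightarrow{g_1} X_1 \cdots X_{i-1} \xleftarrow{f_i} M_i \xrightarrow{g_i} Y \xleftarrow{h_1} N_1 \xrightarrow{k_1} Y_1 \cdots Y_{j-1} \xleftarrow{h_j} N_j \xrightarrow{k_j} Z
\end{equation*}
is a finite sequence of $i+j$ correspondences in $\mathscr V$ in which the target of $g_i$ equals the source of $h_1$ (both are $Y$), so all intermediate gluing conditions are satisfied. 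Hence this sequence is an element of $Zigzag(X,Z)$ of length $i+j$.

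Next I would note that no pullbacks, fiber products, or other constructions that depend on additional choices are invoked in forming $\alpha \wedge \beta$; the juxtaposition is literally the formal concatenation of two ordered sequences of morphisms in $\mathscr V$. Therefore the assignment $(\alpha, \beta) \mapsto \alpha \wedge \beta$ is a well-defined function $Zigzag(X,Y) \times Zigzag(Y,Z) \to Zigzag(X,Z)$.

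There is no real obstacle here: unlike the composition $\circ$ of correspondences (which required checking that pullbacks of proper morphisms are proper and pullbacks of smooth morphisms are smooth, as in the diagram (\ref{product})), the juxtaposition $\wedge$ avoids taking any fiber products at the gluing point $Y$, and this is precisely the reason zigzags circumvent the base change formula required for correspondences. Associativity of $\wedge$ would be the natural next observation, but as stated the lemma only asserts well-definedness, which the above argument establishes.
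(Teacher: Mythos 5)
Your argument is correct and matches the implicit reasoning the paper relies on: the paper offers no proof for this lemma precisely because juxtaposition is a purely formal concatenation of sequences with matching endpoint $Y$, so there is nothing beyond the endpoint-compatibility bookkeeping you carried out. Your contrast with the fiber-product-based composition $\circ$ is a nice editorial remark, but it is exactly the observation the author makes in prose just before the zigzag section, so the two approaches are the same.
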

If a zigzag consists of proper-$\ell.c.i.$ correspondence, i.e., if each $X_{i-1} \xleftarrow {f_i} M_i \xrightarrow {g_i} X_i$ is a proper-$\ell.c.i.$ correspondence, such a zigzag is called \emph{a proper-$\ell.c.i.$ zigzag}, abusing words, and the set of all proper-$\ell.c.i.$  zigzags from $X$ to $Y$ is denoted by $Zigzag_{pro-\ell.c.i.}(X, Z)$. 

Then we define the category $\op{Zigzag}_{pro-\ell.c.i.}(\mathscr V)$ of proper-$\ell.c.i.$ zigzags:
\begin{itemize}
\item $Obj(\op{Zigzag}_{pro-\ell.c.i.}(\mathscr V)) =Obj(\mathscr V).$
\item For $X$ and $Y$, $hom_{\op{Zigzag}_{pro-\ell.c.i.}(\mathscr V)}(X,Y) = Zigzag_{pro-\ell.c.i.}(X,Y).$
\end{itemize}

Two proper-$\ell.c.i.$ zigzags (of the same length) $(X \xleftarrow {f_1} M_1 \xrightarrow {g_1} X_1 \xleftarrow {f_2} M_2 \xrightarrow {g_2} X_2 \cdots \cdots \xleftarrow {f_k} M_k \xrightarrow {g_k} Y)$ and  $(X=X_0 \xleftarrow {f'_1} M'_1 \xrightarrow {g'_1} X'_1 \xleftarrow {f'_2} M'_2 \xrightarrow {g'_2} X'_2 \cdots \cdots X'_{k-1}\xleftarrow {f'_k} M'_k \xrightarrow {g'_k} X_k=Y)$ are called isomorphic if there exist isomorphisms $h_i:M_i \to M'_i$ for $1 \leqq i \leqq k$ and $\phi_j:X_j \to X'_j$ for $1 \leqq j \leqq k-1$ such that the following diagrams commute
{\small$$\xymatrix{
& M_1 \ar[ddd]_{h_1}^{\cong} \ar[dl]_{f_1} \ar[dr]^{g_1} & & M_2 \ar[ddd]_{h_2}^{\cong} \ar[dl]_{f_2} \ar[dr]^{g_2} & & & M_k \ar[ddd]_{h_k}^{\cong} \ar[dl]_{f_k} \ar[dr]^{g_k} \\
X\ar[d]_{=} & &   X_1 \ar[d]_{\phi_1}^{\cong} & & X_2 \ar[d]_{\phi_2}^{\cong} \cdots  & \cdots X_{k-1}\ar[d]_{\phi_{k-1}}^{\cong}  & &  Y \ar[d]_{=}\\
X & &   X'_1 & & X'_2  \cdots  & \cdots X'_{k-1} & &  Y \\
& M'_1 \ar[ul]_{f_1} \ar[ur]^{g_1} & & M'_2\ar[ul]_{f_2} \ar[ur]^{g_2} & & & M'_k\ar[ul]_{f_k} \ar[ur]^{g_k} \\
}
$$
}
The set of isomorphism classes of proper-$\ell.c.i.$ zigzag (of length $k$) becomes an Abelian monoid by taking the disjoint sum
\begin{align*}
& [X\xleftarrow {f_1} M_1 \xrightarrow {g_1} X_1 \cdots \cdots X_{k-1}\xleftarrow {f_k} M_k \xrightarrow {g_k} Y] \\
& \hspace{4cm} + [X \xleftarrow {f'_1} M'_1 \xrightarrow {g'_1} X'_1  \cdots \cdots X'_{k-1}\xleftarrow {f'_k} M'_k \xrightarrow {g'_k} Y] := \\
& {\tiny [X\xleftarrow {f_1 + f'_1} M_1 \sqcup M'_1 \xrightarrow {g_1 + g'_1} X_1 \sqcup X'_1\cdots X_{k-1} \sqcup X'_{k-1}\xleftarrow {f_k + f'_k} M_k \sqcup M'_k \xrightarrow {g_k +g'_k} Y]}.
\end{align*}
Then the group completion of it is denoted by  $Zigzag_k(X,Y)^+_{pro-\ell.c.i.}$ and 
$$Zigzag(X,Y)^+_{pro-\ell.c.i.}:= \bigoplus_k Zigzag_k(X,Y)^+_{pro-\ell.c.i.}.$$
The product by juxtaposition 
$$\wedge: Zigzag_{pro-\ell.c.i.}(X,Y) \times Zigzag_{pro-\ell.c.i.}(Y,Z) \to Zigzag_{pro-\ell.c.i.}(X, Z)$$
 is extended to $Zigzag_{pro-\ell.c.i.}(-,-)^+$:
$$\wedge : Zigzag(X,Y)^+_{pro-sm}\times Zigzag(Y,Z)^+_{pro-\ell.c.i.} \to Zigzag(X,Z)^+_{pro-\ell.c.i.}.$$
Then the $\mathscr Ab$-enriched category $\mathscr Zigzag(\mathscr V)^+_{pro-\ell.c.i.}$ of zigzags is defined as
\begin{itemize}
\item $Obj(\mathscr Zigzag(\mathscr V)^+_{pro-\ell.c.i.}) = Obj(\mathscr V)$,
\item For $X$ and $Y$, $hom_{\mathscr Zigzag(\mathscr V)^+_{pro-\ell.c.i.}}(X,Y) = Zigzag(X,Y)^+_{pro-\ell.c.i.}.$
\end{itemize}

Then
Proposition \ref{pro-BFM} and Theorem \ref{thm-BFM} become as follows:
\begin{pro}
 Define the functors 
$$\mathcal G_0:\op{Zigzag}_{pro-\ell.c.i.}(\mathscr V) \to \mathscr Ab, \quad \mathcal H^{Todd}_*:\op{Zigzag}_{pro-\ell.c.i.}(\mathscr V) \to \mathscr Ab$$ as follows:
\begin{enumerate}
\item For an object $X \in Obj(\op{Zigzag}_{pro-\ell.c.i.}(\mathscr V)) = Obj(\mathscr V)$, 
$$\mathcal G_0(X) =G_0(X), \, \mathcal H^{Todd}_*(X) :=H_*(X)\otimes \mathbb Q \, \, \text{the Borel--Moore homology theory} $$
\item For a morphism  
\begin{align*}
\alp = (X \xleftarrow {f_1} M_1  \xrightarrow {g_1} & X_1 \cdots  \cdots X_{i-1} \xleftarrow {f_i} M_i \xrightarrow {g_i} Y) \\
& \in hom_{\op{Zigzag}_{pro-\ell.c.i.}(\mathscr V)}(X,Y) = Zigzag_{pro-\ell.c.i}(X,Y)
\end{align*}
$$\mathcal G_0(\alp) := (f_1)_*(g_1)^* \circ \cdots \circ (f_i)_*(g_i)^*:  \mathcal G_0(Y) \to \mathcal G_0(X),$$
\begin{align*}
& \mathcal  H^{Todd} _*(\alp)\\
& :=(f_1)_*(td(T_{g_1})(g_1)^*) \circ \cdots \circ (f_i)_*(td(T_{g_i})(g_i)^*)):  \mathcal H^{Todd}_*(Y) \to \mathcal H^{Todd}_*(X).
\end{align*}
\end{enumerate}
Then $\mathcal G_0:\op{Zigzag}_{pro-\ell.c.i.}(\mathscr V) \to \mathscr Ab$ and $\mathcal H^{Todd}_*:\op{Zigzag}_{pro-\ell.c.i.}(\mathscr V) \to \mathscr Ab$ are 
functors in the sense of
$\mathcal G_0(\alp \wedge \be) = \mathcal G_0(\alp) \circ \mathcal G_0(\be)$ and $ \mathcal H^{Todd}_*(\alp \wedge  \be) = \mathcal  H^{Todd}_*(\alp)\circ \mathcal H^{Todd}_*(\be)$, and
Baum--Fulton--MacPherson's Todd class transformation $td_*^{\op{BFM}}: G_0(-) \to H_*(-)\otimes \mathbb Q$ is extended to the natural transformation
$$td_*^{\op{BFM}}: \mathcal  G_0(-) \to \mathcal H^{Todd}_*(-).$$
\end{pro}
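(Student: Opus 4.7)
The plan is to reduce the statement to two classical ingredients for a single proper-$\ell.c.i.$ correspondence, and then observe that because zigzags compose by juxtaposition rather than by fiber product, the passage to arbitrary zigzags is essentially formal. This side-steps the two difficulties highlighted in Remark \ref{rem-*}: that $\ell.c.i.$ morphisms are not stable under base change, and that the base change formula for Borel-Moore homology is not available in general. That is precisely why the zigzag formalism is introduced in this section.

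First I would fix a single proper-$\ell.c.i.$ correspondence $X \xleftarrow{f} M \xrightarrow{g} Y$. The pullback $g^*$ on $G_0$ and on $H_*(-)\otimes \mathbb Q$ is defined as the refined Gysin / derived pullback for $\ell.c.i.$ $g$, and $f_*$ is defined because $f$ is proper. The two ingredients I need are the proper-pushforward compatibility $td_*^{\op{BFM}} \circ f_* = f_* \circ td_*^{\op{BFM}}$ from diagram (\ref{diagram*2}), and the $\ell.c.i.$ version of Verdier-Riemann-Roch $td_*^{\op{BFM}} \circ g^* = (td(T_g) \cap g^*) \circ td_*^{\op{BFM}}$ with $T_g$ the virtual relative tangent bundle (Remark \ref{rem-*}, cf.\ \cite[Theorem 18.2]{Fulton-book}). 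Juxtaposing these two commuting squares side by side yields one commutative rectangle attached to each proper-$\ell.c.i.$ correspondence.

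Next I would define $\mathcal G_0(\alpha)$ and $\mathcal H^{Todd}_*(\alpha)$ for a zigzag $\alpha$ as the literal composition, read along the zigzag, of the operators $(f_j)_*(g_j)^*$ and $(f_j)_*\bigl(td(T_{g_j}) \cap (g_j)^*\bigr)$ respectively, exactly as displayed in the statement. The covariance identities $\mathcal G_0(\alpha \wedge \beta) = \mathcal G_0(\alpha) \circ \mathcal G_0(\beta)$ and the corresponding one for $\mathcal H^{Todd}_*$ then hold immediately from the definition, because juxtaposition of zigzags corresponds on the operator side to concatenation of composites and composition in $\mathscr Ab$ is associative. Naturality of $td_*^{\op{BFM}}$ along an entire zigzag is then obtained by further juxtaposing the commutative rectangles from the previous step; the outer rectangle commutes automatically.

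The only step that requires genuine content rather than formal diagram chasing is the $\ell.c.i.$ Verdier-Riemann-Roch formula, the analogue of diagram (\ref{diagram*3}) for $\ell.c.i.$ morphisms, which I would simply cite. Beyond that I do not anticipate any essential obstacle; in particular, no analogue of the base change calculation used in the proof of Proposition \ref{pro-BFM} is required, because zigzag composition never forces one to analyse a fiber product, which is the whole raison d'\^etre of introducing zigzags in this context.
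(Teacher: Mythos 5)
Your proposal is correct, and it captures exactly the argument the paper has in mind: the paper itself leaves this proposition's proof implicit, saying the zigzag versions are obtained ``in the same way'' as Propositions \ref{pro-BFM}/\ref{pro-Mac}, and Remark \ref{rem-*} supplies precisely the two facts you invoke (the $\ell.c.i.$ Verdier--Riemann--Roch for $td_*^{\op{BFM}}$ and the observation that zigzags make the base-change and $\ell.c.i.$-stability issues disappear). Your identification that covariance is tautological from juxtaposition, and that naturality reduces to stacking the proper-pushforward square with the $\ell.c.i.$ VRR square for each elementary correspondence, is exactly the intended reasoning.
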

\begin{thm} 
We define 
$$\text{$\mathscr G_0: \mathscr Zigzag(\mathscr V)^+_{pro-\ell.c.i.} \to \mathscr  Ab$ and $\mathscr  H^{Todd}_*: \mathscr Zigzag(\mathscr V)^+_{pro-\ell.c.i.} \to \mathscr  Ab$}$$
 as follows: 
\begin{enumerate}
\item For an object $X \in Obj(\mathscr Zigzag(\mathscr V)^+_{pro-\ell.c.i.})$, $\mathscr  G_0(X):= F(X)$ and $\mathscr  H^{Todd}_*(X):=H_*(X)\otimes \mathbb Q$,
\item For $\sum_in_i[\alp_i]  \in hom_{\mathscr Zigzag(\mathscr V)^+_{pro-\ell.c.i.}}(X,Y) := Zigzag_(X,Y)^+_{pro-\ell.c.i.}$ 
\begin{align*}
\mathscr  G_0 \Bigl (\sum_in_i[\alp_i] \Bigr) &:= \sum_in_i\mathcal  G_0 (\alp_i): \mathscr G_0(Y) \to \mathscr G_0(X),\\
\mathscr  H^{Todd}_*\Bigl ([\alp_i] \Bigr) &:= \sum_in_i\mathcal  H^{Todd}_*(\alp_i): \mathscr  H^{Todd}_*(Y) \to \mathscr  H^{Todd}_*(X).
\end{align*}
\end{enumerate}
Then $\mathscr G_0:\mathscr Zigzag(\mathscr V)^+_{pro-\ell.c.i.} \to \mathscr  Ab$ and $\mathscr  H^{Todd}_*: \mathscr Zigzag(\mathscr V)^+_{pro-\ell.c.i.} \to \mathscr  Ab$ are 
functors in the sense of $\mathscr G_0(\alp \wedge \be) = \mathscr G_0(\alp) \circ \mathscr G_0(\be)$ and $\mathscr H^{Todd}_*(\alp \wedge  \be) = \mathscr  H^{Todd}_*(\alp)\circ \mathscr H^{Todd}_*(\be)$, and MacPherson's Chern class transformation $td_*^{\op{BFM}}: G_0(-) \to H_*(-)\otimes \mathbb Q$ is extended to the natural transformation
$$td_*^{\op{BFM}}: \mathscr  G_0(-) \to \mathscr H^{Todd}_*(-).$$
\end{thm}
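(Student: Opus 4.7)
The plan is to reduce everything to the single proper-$\ell.c.i.$ correspondence case and then exploit the fact that composition in $\mathscr Zigzag(\mathscr V)^+_{pro-\ell.c.i.}$ is juxtaposition rather than fiber product. Unlike in the proper-smooth correspondence case, this means that neither a base change formula nor stability of $\ell.c.i.$ morphisms under base change is needed---which is precisely why the zigzag formalism is introduced in this setting.

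First I would verify that $\mathscr G_0$ and $\mathscr H^{Todd}_*$ are well-defined on isomorphism classes of zigzags. For a single proper-$\ell.c.i.$ correspondence $X \xleftarrow{f} M \xrightarrow{g} Y$ this reduces, exactly as in the proof of Lemma \ref{lem2}, to the identities $h_* h^* = \op{id}$ for an isomorphism $h : M \cong M'$, which hold for both $G_0(-)$ and the Borel-Moore homology $H_*(-) \otimes \mathbb Q$. For a general proper-$\ell.c.i.$ zigzag, the component isomorphisms $h_i : M_i \cong M'_i$ and $\phi_j : X_j \cong X'_j$ in the definition of zigzag isomorphism let one replace each correspondence independently, and each such replacement preserves the assigned homomorphism by the single-correspondence case.

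Second, the functoriality $\mathscr G_0(\alpha \wedge \beta) = \mathscr G_0(\alpha) \circ \mathscr G_0(\beta)$ and $\mathscr H^{Todd}_*(\alpha \wedge \beta) = \mathscr H^{Todd}_*(\alpha) \circ \mathscr H^{Todd}_*(\beta)$ is essentially tautological. By construction, $\mathscr G_0$ sends a zigzag $\alpha = (X \xleftarrow{f_1} M_1 \xrightarrow{g_1} X_1 \cdots \xleftarrow{f_k} M_k \xrightarrow{g_k} Y)$ to the composite $(f_1)_*(g_1)^* \circ \cdots \circ (f_k)_*(g_k)^*$, and juxtaposition $\alpha \wedge \beta$ simply concatenates the underlying lists of correspondences; hence the homomorphism assigned to $\alpha \wedge \beta$ splits as the composite of those assigned to $\alpha$ and $\beta$. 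The analogous bookkeeping works for $\mathscr H^{Todd}_*$. Extension by linearity to formal sums $\sum_i n_i [\alpha_i]$ is then immediate from the bilinearity of composition of $\mathbb Z$-linear maps.

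For the naturality of $td_*^{\op{BFM}}$, I would use, for each individual proper-$\ell.c.i.$ correspondence $X_{i-1} \xleftarrow{f_i} M_i \xrightarrow{g_i} X_i$, two standard commutative squares: the naturality square of $td_*^{\op{BFM}}$ for the proper pushforward $(f_i)_*$, and the Verdier--Riemann--Roch square for the $\ell.c.i.$ pullback $(g_i)^*$, where the bottom arrow is $td(T_{g_i}) \cap (g_i)^*$. As noted in Remark \ref{rem-*}, VRR for $td_*^{\op{BFM}}$ is valid for all $\ell.c.i.$ morphisms, not just smooth ones. Pasting these two squares horizontally produces a rectangle exactly of the form (\ref{diagram***4}) attached to the $i$th correspondence, and concatenating these rectangles for $i = 1, \ldots, k$ yields the single commutative diagram expressing $td_*^{\op{BFM}} \circ \mathscr G_0(\alpha) = \mathscr H^{Todd}_*(\alpha) \circ td_*^{\op{BFM}}$. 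Linearity in the sum $\sum_i n_i [\alpha_i]$ then gives the full natural transformation. The main obstacle, and the whole reason this is phrased using zigzags in the Borel-Moore setting, is that $\ell.c.i.$ pullback is not well-behaved under arbitrary base change and $\ell.c.i.$ morphisms are not preserved by base change; the zigzag construction bypasses this entirely, since composition never forms a fiber product, so the only ingredients required are proper pushforward, $\ell.c.i.$ pullback, cap product with $td(T_g)$, and VRR for $\ell.c.i.$ morphisms---each of which is classical.
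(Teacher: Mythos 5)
Your proposal is correct and follows the approach the paper leaves implicit: the theorem is stated without a written proof, relying on the earlier Theorem~\ref{thm-BFM}, the analogy with the proper-smooth zigzag case, and Remark~\ref{rem-*} (Verdier--Riemann--Roch for $td_*^{\op{BFM}}$ holds for any $\ell.c.i.$ morphism). Your three-step decomposition --- well-definedness via the isomorphism condition, functoriality as a tautology of juxtaposition, and naturality by pasting the proper-pushforward naturality square with the $\ell.c.i.$ VRR square for each link --- is exactly what the text expects the reader to reconstruct, and your observation that zigzags sidestep both the base change formula and the failure of $\ell.c.i.$ morphisms to be stable under base change is precisely the point the paper emphasizes. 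One small imprecision: in the well-definedness step you say the component isomorphisms $h_i$, $\phi_j$ ``let one replace each correspondence independently,'' but when $\phi_j \neq \op{id}$ the $j$th links of the two zigzags are not isomorphic as correspondences in the sense of Section~2 (their endpoints differ); the actual cancellation comes from writing $(f'_i)_*(g'_i)^*$ in terms of $(f_i)_*(g_i)^*$ and collapsing the interior factors $(\phi_j)^*(\phi_j)_* = \op{id}$ and $(h_i)_*(h_i)^* = \op{id}$. This does not affect the validity of the argument, only its phrasing.
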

\begin{rem} We note the following:
\begin{enumerate}
\item A zigzag of proper-identity correspondences is the same as a proper-identity correspondence $X \xleftarrow f Y \xrightarrow {\op{id}_Y} Y$ with proper $f$,
\item A zigzag of identity-smooth correspondences is the same as an identity-smooth correspondence $X \xleftarrow {\op{id}_X} X \xrightarrow g Y$ with smooth  $g$,
\item A zigzag of identity-$\ell.c.i.$ correspondences is the same as an identity-$\ell.c.i.$ correspondence $X \xleftarrow {\op{id}_X} X \xrightarrow g Y$ with $\ell.c.i.$ morphism $g$.
\end{enumerate}
\end{rem}
\section{Cobordism bicycles of vector bundles and \\ Baum--Fulton--MacPherson's Todd classes}
In this section we consider extending the notion of algebraic cobordism of vector bundles due to Y.-P. Lee and R. Pandharipande \cite{LeeP} to correspondences.

\begin{defn} Let $X \xleftarrow p V \xrightarrow s Y$ be a proper-smooth correspondence and let $E$ be a complex algebraic vector bundle on $V$. Then the pair of them
$$(X \xleftarrow p V \xrightarrow s Y; E)$$
is called a \emph{cobordism bicycle of a vector bundle}.
\end{defn}
\begin{rem} The above cobordism bicycle of a vector bundle $(X \xleftarrow p V \xrightarrow s Y; E)$ is just a proper-smooth correspondence equipped with a  complex vector bundle, but mimicking terminologies used in \cite{BB}, \cite{LM} and \cite{LeeP}, we name it so. A similar object is used in the so-called KK-theory (e.g., see \cite{EM3}). Such KK-theoretic things, i.e., bi-variant theoretic aspects are treated in \cite{Yokura-bicycle}.
\end{rem}
\begin{defn} Let $(X \xleftarrow p V \xrightarrow s Y; E)$ and $(X \xleftarrow {p'} V' \xrightarrow {s'} Y; E')$ be two cobordism bicycles of vector bundles of the same rank. If there exists an isomorphism $h: V \cong V'$ such that $(X \xleftarrow p V \xrightarrow s Y) \cong (X \xleftarrow {p'} V' \xrightarrow {s'} Y)$ as correspondences and $E \cong h^*E'$ as well,
they are called isomorphic and denoted by
$$(X \xleftarrow p V \xrightarrow s Y; E) \cong (X \xleftarrow {p'} V' \xrightarrow {s'} Y; E').$$
\end{defn}
The isomorphism class of a cobordism bicycle of a vector bundle $(X \xleftarrow p V \xrightarrow s Y; E)$ is denoted by $[X \xleftarrow p V \xrightarrow s Y; E]$, which is still called a cobordism bicycle of a vector bundle.  For a fixed rank $r$ for vector bundles, the set of isomorphism classes of cobordism bicycles of vector bundles for a pair $(X,Y)$  becomes a commutative monoid by the disjoint sum:
\begin{align*}
[X \xleftarrow {p_1} V_1 \xrightarrow {s_1} Y; E_1] + [X \xleftarrow {p_2} V_2 & \xrightarrow {s_2} Y; E_2]\\
& := [X \xleftarrow {p_1+p_2} V_1 \sqcup V_2 \xrightarrow {s_1+s_2} Y; E_1 + E_2],
\end{align*}
where $E_1 + E_2$ is a vector bundle such that $(E_1 + E_2)|_{V_1} =E_1$ and $(E_1 + E_2)|_{V_2} =E_2.$
This monoid is denoted by $\mathcal  M_r(X,Y)$ and another grading of $[X \xleftarrow p V \xrightarrow s Y; E]$ is defined by the relative dimension $\op{dim} s$ of the smooth map $s$, thus by double grading, $[X \xleftarrow p V \xrightarrow s Y; E] \in \mathcal  M_{n,r}(X,Y)$ means that $n = \op{dim} s$ and $r = \op{rank} E$.
The group completion of this monoid, i.e., the Grothendieck group, is denoted by $\mathcal  M_{n,r}(X,Y)^+$. We use this notation, mimicking \cite{LM, LeeP} (cf. Remark \ref{M+}).
\begin{rem} For a fixed rank $r$, $\mathcal  M_{*, r}(X,Y)^+ = \bigoplus \mathcal  M_{n,r}(X,Y)^+$ is  a graded abelian group.
\end{rem}
\begin{rem} When $Y=pt$ a point, $\mathcal  M_{n,r}(X,pt)^+$ is nothing but $\mathcal  M_{n,r}(X)^+$ which is considered in Lee--Pandharipande \cite{LeeP}. In this sense, when $X = pt$ a point, $\mathcal  M_{n,r}(pt,Y)^+$ is a new object to be investigated.
\end{rem}

\begin{defn}[product of cobordism bicycles]\label{prod} For three varieties $X,Y,Z$, we define the following two kinds of product
\begin{enumerate}
\item (by the Whitney sum $\oplus$)
$$\circ_{\oplus} : \mathcal  M_{m,r}(X,Y)^+ \times \mathcal  M_{n,k}(Y,Z)^+ \to \mathcal  M_{m+n,r+k}(X,Z)^+, $$
\begin{align*}
[X \xleftarrow p  V \xrightarrow s Y; E] \circ_{\oplus}   [& Y \xleftarrow q  W \xrightarrow t Z; F]\\
&: = [(X \xleftarrow p V \xrightarrow s Y) \circ (Y \xleftarrow q W \xrightarrow s Z);\widetilde{q}^*E \oplus \widetilde{s}^*F ],
\end{align*}
\item (by the tensor product $\otimes$)
$$\circ_{\otimes}  : \mathcal  M_{m,r}(X,Y)^+ \times \mathcal  M_{n,k}(Y,Z)^+ \to \mathcal  M_{m+n,rk}(X,Z)^+,$$
\begin{align*}
[X \xleftarrow p  V \xrightarrow s Y; E] \circ_{\otimes} & [Y \xleftarrow q W \xrightarrow t Z; F]\\
&: = [(X \xleftarrow p V \xrightarrow s Y) \circ (Y \xleftarrow q W \xrightarrow s Z);\widetilde{q}^*E \otimes \widetilde{s}^*F ],
\end{align*}
\end{enumerate}
where we consider the following commutative diagram
$$\xymatrix{
& & \widetilde{q}^*E \oplus \widetilde{s}^*F \quad \text{or} \quad \widetilde{q}^*E \otimes  \widetilde{s}^*F \ar[d] \\
& E \ar[d]& V\times_Y W\ar [dl]_{\widetilde{q}} \ar[dr]^{\widetilde{s}} & F \ar[d]&\\
& V \ar [dl]_{p} \ar [dr]^{s} && W \ar [dl]_{q} \ar[dr]^{t}\\
X & &  Y && Z. }
$$
\end{defn}

\begin{lem}
The products $\circ_{\oplus}$ and $\circ_{\otimes}$ are both bilinear.
\end{lem}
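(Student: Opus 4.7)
The plan is to verify bilinearity of $\circ_{\oplus}$ and $\circ_{\otimes}$ in parallel, since the two products differ only in whether we form $\widetilde{q}^{*}E \oplus \widetilde{s}^{*}F$ or $\widetilde{q}^{*}E \otimes \widetilde{s}^{*}F$, and both operations on bundles commute with restriction to an open/closed summand. Bilinearity has two contents: (i) the product is well-defined on isomorphism classes, and (ii) it is additive in each argument separately. Once (i) and (ii) are established, the extension from monoids to their group completions $\mathcal{M}_{m,r}(X,Y)^{+}$ is automatic.

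For (i), suppose $(X \xleftarrow{p} V \xrightarrow{s} Y; E) \cong (X \xleftarrow{p'} V' \xrightarrow{s'} Y; E')$ via an isomorphism $h \colon V \to V'$ with $p = p'\circ h$, $s = s'\circ h$, and $E \cong h^{*}E'$; likewise $(Y \xleftarrow{q} W \xrightarrow{t} Z; F) \cong (Y \xleftarrow{q'} W' \xrightarrow{t'} Z; F')$ via $k \colon W \to W'$ with $F \cong k^{*}F'$. The universal property of the fiber product yields a canonical isomorphism $h \times_{Y} k \colon V \times_{Y} W \xrightarrow{\cong} V' \times_{Y} W'$ intertwining $\widetilde{q}, \widetilde{s}$ with $\widetilde{q'}, \widetilde{s'}$ and also intertwining $p \circ \widetilde{q}, t \circ \widetilde{s}$ with $p'\circ \widetilde{q'}, t'\circ \widetilde{s'}$. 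Pulling back along $h \times_{Y} k$ then identifies $\widetilde{q}^{*}E$ with the pullback of $\widetilde{q'}^{*}E'$ and $\widetilde{s}^{*}F$ with that of $\widetilde{s'}^{*}F'$, so the Whitney sum (resp.\ tensor product) is preserved.

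For (ii), I would use that disjoint union is a coproduct in $\mathscr{V}$ and that fiber product over a fixed base commutes with coproducts: for $V = V_{1} \sqcup V_{2}$ with structure maps $p = p_{1}+p_{2}$, $s = s_{1}+s_{2}$ and bundle $E = E_{1}+E_{2}$ (meaning $E|_{V_{i}} = E_{i}$), there is a canonical identification
\[
(V_{1} \sqcup V_{2}) \times_{Y} W \;=\; (V_{1} \times_{Y} W) \,\sqcup\, (V_{2} \times_{Y} W),
\]
under which the maps $\widetilde{q}, \widetilde{s}$ restrict on the $i$-th summand to $\widetilde{q_{i}}, \widetilde{s_{i}}$. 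Consequently $\widetilde{q}^{*}E$ restricts on $V_{i} \times_{Y} W$ to $\widetilde{q_{i}}^{*}E_{i}$, and the Whitney sum (resp.\ tensor product) with $\widetilde{s}^{*}F$ restricts to $\widetilde{q_{i}}^{*}E_{i} \oplus \widetilde{s_{i}}^{*}F$ (resp.\ $\otimes$). This is exactly the statement that
$$\bigl([X \xleftarrow{p_{1}} V_{1} \xrightarrow{s_{1}} Y; E_{1}] + [X \xleftarrow{p_{2}} V_{2} \xrightarrow{s_{2}} Y; E_{2}]\bigr) \circ_{\oplus/\otimes} [Y \xleftarrow{q} W \xrightarrow{t} Z; F]$$
equals the sum of the two separate products, giving additivity in the first argument; the second argument is handled symmetrically.

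The main obstacle is essentially bookkeeping: one has to check that the canonical identification of the fiber product of a disjoint union with the disjoint union of fiber products is compatible with every structure map ($p, s, q, t$ and their pullbacks $\widetilde{p}, \widetilde{s}, \widetilde{q}, \widetilde{t}$) and with the decomposition of the bundle $E_{1}+E_{2}$. Since all of this is controlled by the universal property of fiber products and the functorial behavior of $\oplus$ and $\otimes$ under restriction, no new idea is needed beyond the analogous verifications already carried out for the proper-smooth correspondence product in Section 3.
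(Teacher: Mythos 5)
Your proof is correct. The paper actually states this lemma without any proof (treating it as routine, since it is entirely analogous to the bilinearity already established for $\circ$ on $Corr(X,Y)^{+}$ in Section 2), so there is no argument to compare against; your write-up supplies exactly the verification the paper leaves implicit. Both of your steps are sound: well-definedness on isomorphism classes follows from the universal property of the fiber product producing the canonical isomorphism $h \times_{Y} k$ together with the compatibility of bundle pullback, $\oplus$, and $\otimes$ with that isomorphism; and biadditivity follows from the distributivity of $-\times_{Y} W$ over disjoint unions plus the compatibility of the structure maps and of the bundle $E_{1}+E_{2}$ with that decomposition, with the second argument handled symmetrically. The final passage from biadditivity on the monoid of isomorphism classes to bilinearity on the Grothendieck group completion is indeed automatic, as you note.
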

\begin{rem} $\mathcal  M_{*,*}(X,X)^+$ is a double graded commutative ring with respect to both products $\circ_{\oplus}$ and $\circ_{\otimes}$
\end{rem}
\begin{rem} We consider the above product $\circ_{\oplus}$ for $Y = Z=pt$ a point, since $\mathcal  M_{n,r}(X,pt)^+ = \mathcal  M_{n,r}(X)^+$ and $\mathcal  M_{n,r}(pt,pt)^+ = \mathcal  M_{n,r}(pt)^+$, we have
$$\circ_{\oplus}: \mathcal  M_{m,r}(X)^+ \times \mathcal  M_{n,k}(pt)^+ \to \mathcal  M_{m+n,r+k}(X)^+.$$
\begin{align*}
[X \xleftarrow p  V \xrightarrow s pt; E] \circ_{\oplus} & [pt \xleftarrow q W \xrightarrow t pt; F]
=\\
& [(X \xleftarrow p V \xrightarrow s pt) \circ (pt \xleftarrow q W \xrightarrow s pt);(pr_1)^*E \oplus (pr_2)^*F ],
\end{align*}
which is, using the notations used in \cite{LeeP}, rewritten as follows
$$[V \xrightarrow p X, E] \circ_{\oplus} [W; F]= [V \times W \xrightarrow {p \circ pr_1} X; (pr_1)^*E \oplus (pr_2)^*F].$$
$$\xymatrix{
& & (pr_1)^*E \circ_{\oplus} (pr_2)^*F \ar[d] \\
& E \ar[d]& V\times W\ar [dl]_{pr_1} \ar[dr]^{pr_2} & F \ar[d]&\\
& V \ar [dl]_{p} \ar [dr]^{s} && W \ar [dl]_{q} \ar[dr]^{t}\\
X & &  pt && pt. }
$$
\end{rem}

\begin{rem} Let $[X \xleftarrow p  V \xrightarrow s Y; E] \in \mathcal  M_{m,r}(X,Y)^+_{\oplus}$, and let
\begin{enumerate}
\item $[X \xleftarrow p V] := [X \xleftarrow p V \xrightarrow {\op{id}_V} V] \in \mathcal  M_{0,0}(X,V)^+_{\oplus}$,
\item $[V; E] := [V \xleftarrow {\op{id}_V} V \xrightarrow {\op{id}_V} V; E] \in \mathcal  M_{0,r}(V,V)^+_{\oplus}$,
\item $[V \xrightarrow s Y]:= [V \xleftarrow {\op{id}_V} V \xrightarrow s Y] \in \mathcal  M_{m,0}(V,Y)^+_{\oplus}$.
\end{enumerate}
Then we have $[X \xleftarrow p  V \xrightarrow s Y; E] = [X \xleftarrow p V]\circ_{\oplus} [V; E] \circ_{\oplus} [V \xrightarrow s Y].$
\end{rem}
\begin{rem} As to cobordism bicycles, as ``bicycle" suggest, we can discuss bivariant-theoretic aspects, but in this paper we will not discuss them in details anymore. For more detailed properties and bivariant-theoretic aspects, see \cite{Yokura-bicycle}.
\end{rem} 

\begin{defn} Define the enriched category $\mathscr M_{*,*}(\mathscr V)^+_{\oplus}$, $\mathscr M_{*,*}(\mathscr V)^+_{\otimes}$ of cobordism bicycles of vector bundles as follows:
\begin{enumerate}
\item $Obj(\mathscr M_{*,*}(\mathscr V)^+_{\oplus})=Obj(\mathscr M_{*,*}(\mathscr V)^+_{\otimes})=Obj(\mathscr V).$
\item For two objects $X$ and $Y$, 
$$hom_{\mathscr M_{*,*}(\mathscr V)^+_{\oplus}}(X,Y) := \mathcal M_{*,*}(X,Y)^+_{\oplus}, \quad hom_{\mathscr M_{*,*}(\mathscr V)^+_{\otimes}}(X,Y) := \mathcal M_{*,*}(X,Y)^+_{\otimes}.$$
\end{enumerate}
\end{defn}

 \begin{pro}\label{pro-M} Let $c\ell$ be a multiplicative characteristic class of complex vector bundles (hence, in particular, $c\ell(F_1 \oplus F_2) = c\ell(F_1) \cup c\ell(F_2)$) with $c\ell(E) \in H^*(X) \otimes \Lambda$ with some ring $\Lambda$. Define $\underline {\mathscr  H}^{c\ell}_*: \mathscr  M_{*,*}(\mathscr V)^+_{\oplus} \to \mathscr Ab$ by
 \begin{enumerate}
 \item For an object $X$, $\underline {\mathscr  H}^{c\ell}_*(X) = H_*(X)\otimes \Lambda$, the Borel--Moore homology theory with coefficients in $\Lambda$
 \item For a morphism $\sum_i n_i[X \xleftarrow {p_i} V_i \xrightarrow {s_i} Y; E_i] \in hom_{\mathscr M_{*,*}(\mathscr V)^+_{\oplus}}(X,Y)$
 \begin{align*}
 \underline {\mathscr  H}^{c\ell}_*(\sum_i n_i[& X \xleftarrow {p_i} V_i \xrightarrow {s_i} Y; E_i]) \\
 & := \sum_i n_i(p_i)_* \Bigl(c\ell(E_i) \cap (s_i)^*\Bigr): \underline {\mathscr  H}^{c\ell}_*(Y) \to \underline {\mathscr  H}^{c\ell}_*(X).
 \end{align*}
 \end{enumerate}
 Then the functor $\underline {\mathscr  H}^{c\ell}_*: \mathscr  M_{*,*}(\mathscr V)^+_{\oplus}\to \mathscr Ab$ is a 
 functor in the sense of
 $\underline {\mathscr  H}^{c\ell}_*(\alp \circ_{\oplus} \be) = \underline {\mathscr  H}^{c\ell}_*(\alp) \circ \underline {\mathscr  H}^{c\ell}_*(\be).$
\end{pro}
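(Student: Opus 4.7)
The plan is to reduce to a single generator $\alpha = [X \xleftarrow{p} V \xrightarrow{s} Y; E] \in \mathcal{M}_{*,*}(X,Y)^+_{\oplus}$ and $\beta = [Y \xleftarrow{q} W \xrightarrow{t} Z; F] \in \mathcal{M}_{*,*}(Y,Z)^+_{\oplus}$ by bilinearity, and then to compute $\underline{\mathscr{H}}^{c\ell}_*(\alpha \circ_{\oplus} \beta)$ directly using the fiber square in Definition \ref{prod}. By definition,
\[
\alpha \circ_{\oplus} \beta = [X \xleftarrow{p \circ \widetilde{q}} V\times_Y W \xrightarrow{t \circ \widetilde{s}} Z;\ \widetilde{q}^*E \oplus \widetilde{s}^*F],
\]
so
\[
\underline{\mathscr{H}}^{c\ell}_*(\alpha \circ_{\oplus} \beta) = (p \circ \widetilde{q})_*\bigl(c\ell(\widetilde{q}^*E \oplus \widetilde{s}^*F) \cap (t \circ \widetilde{s})^*\bigr),
\]
and the task is to massage the right-hand side into $\bigl(p_*(c\ell(E)\cap s^*)\bigr) \circ \bigl(q_*(c\ell(F)\cap t^*)\bigr)$.

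The key steps I will carry out, in order, are: (i) apply multiplicativity of $c\ell$ and naturality of Chern classes under pullback to rewrite $c\ell(\widetilde{q}^*E \oplus \widetilde{s}^*F) = \widetilde{q}^* c\ell(E) \cup \widetilde{s}^* c\ell(F)$; (ii) use functoriality of pushforward/pullback to split $(p \circ \widetilde{q})_* = p_* \widetilde{q}_*$ and $(t \circ \widetilde{s})^* = \widetilde{s}^* t^*$; (iii) apply the projection formula for $\widetilde{q}$ to pull the class $\widetilde{q}^* c\ell(E)$ out of $\widetilde{q}_*$, producing a factor $c\ell(E)$ capped against $\widetilde{q}_*(\widetilde{s}^*(c\ell(F) \cap t^*))$; (iv) apply the base change formula for the Chow homology functor to the fiber square in diagram (\ref{product}) to rewrite $\widetilde{q}_*\widetilde{s}^* = s^* q_*$; (v) reassemble the resulting expression as the composition $\underline{\mathscr{H}}^{c\ell}_*(\alpha) \circ \underline{\mathscr{H}}^{c\ell}_*(\beta)$.

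Before this computation, I must verify well-definedness, namely that the assignment does not depend on the chosen representative of the isomorphism class $[X \xleftarrow{p} V \xrightarrow{s} Y; E]$. If $h: V \cong V'$ realizes the isomorphism with $E \cong h^*E'$, then $p = p' \circ h$, $s = s' \circ h$, and $c\ell(E) = h^* c\ell(E')$, so by the same projection formula/naturality argument together with the isomorphism condition $h_* h^* = \op{id}$ used in Lemma \ref{lem2}, the two expressions agree. This lets the definition descend to $\mathcal{M}_{*,*}(X,Y)^+_{\oplus}$.

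The main obstacle, and the step that drives the whole argument, is step (iv): invoking the base change identity $\widetilde{q}_* \widetilde{s}^* = s^* q_*$ on the fiber square $V \times_Y W$. This is exactly why the hypothesis that $s$ is smooth is indispensable and why the Chow homology functor (rather than Borel-Moore homology, cf.\ the author's remark) is the appropriate target: the flat base change for Chow groups (\cite[Proposition 1.7 and Theorem 1.7]{Fulton-book}) applies to the pullback of a smooth morphism, and this is precisely the context in which $s$ appears in a proper-smooth correspondence. Once base change is available, the remaining manipulations are formal consequences of standard properties of the cap product, and bilinearity in $\alpha, \beta$ upgrades the single-generator identity to the full functoriality statement.
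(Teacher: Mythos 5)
Your proposal is correct and follows essentially the same chain of manipulations as the paper's proof: multiplicativity and naturality of $c\ell$ to unpack $c\ell(\widetilde{q}^*E \oplus \widetilde{s}^*F)$, functoriality of pushforward/pullback, the projection formula for $\widetilde q$, and the flat/proper base change $\widetilde q_* \widetilde s^* = s^* q_*$ on the Chow homology functor. Your additional discussion of well-definedness on isomorphism classes (via the isomorphism condition $h_* h^* = \op{id}$) is a sensible supplement that the paper handles only in a separate remark rather than in this proof itself.
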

\begin{proof} It suffices to show 
\begin{align*}
\underline {\mathscr  H}^{c\ell}_* ( [X \xleftarrow p V \xrightarrow s Y; E] & \circ_{\oplus}  [Y \xleftarrow q W \xrightarrow t Z; F]) \\
& = \underline {\mathscr  H}^{c\ell}_* ([X \xleftarrow p  V \xrightarrow s Y; E]) \circ \underline {\mathscr  H}^{c\ell}_* ([Y \xleftarrow q W \xrightarrow t Z; F])
\end{align*}
The proof is the same as that of Proposition \ref{pro-BFM}, but for the sake of readers' convenience we write it down.
\begin{align*}
\underline {\mathscr  H}^{c\ell}_* ( [& X \xleftarrow p V \xrightarrow s Y; E]  \circ_{\oplus}  [Y \xleftarrow q W \xrightarrow t Z; F]) \\
& = \underline {\mathscr  H}^{c\ell}_* ([X \xleftarrow {p \circ \widetilde q} V \times _Y W \xrightarrow {t \circ \widetilde s} Z;\widetilde{q}^*E \oplus \widetilde{s}^*F ])\\
& = (p \circ \widetilde q)_* \Bigr (c\ell (\widetilde {q}^*E \oplus \widetilde {s}^*F) \cap (t \circ \widetilde s)^* \Bigr) \\
& = p_* \widetilde q_* \biggl ( \Bigl (\widetilde {q}^*c\ell(E) \cup \widetilde {s}^*c\ell(F) \Bigr) \cap (\widetilde s^* \circ t^*) \biggr) \\
& = p_* \widetilde q_* \biggl ( \widetilde {q}^*c\ell(E) \cap \Bigl ((\widetilde {s}^*c\ell(F) \cap (\widetilde s^* \circ t^*) \Bigr) \biggr) \\
& = p_* \biggl (c\ell(E) \cap \widetilde q_* \Bigl (\widetilde {s}^* (c\ell(F) \cap t^*) \Bigr) \biggr)  \quad \text{(by the projection formula)}\\
& = p_* \biggl (c\ell(E) \cap \widetilde q_* \Bigl ((\widetilde {s}^*c\ell(F) \cap (\widetilde s^* \circ t^*) \Bigr) \biggr) \\
& = p_* \biggl (c\ell(E) \cap \widetilde q_* \widetilde {s}^* (c\ell(F) \cap t^*) \biggr) \\
& = p_* \biggl (c\ell(E) \cap s^*q_* (c\ell(F) \cap t^*) \biggr)  \quad \text{(since $\widetilde q_* \widetilde {s}^* = s^* q_*$)} \\
& = p_* \biggl (c\ell(E) \cap s^* \Bigl (q_* (c\ell(F) \cap t^*) \Bigr) \biggr) \\
& = \underline {\mathscr  H}^{c\ell}_* ([X \xleftarrow p  V \xrightarrow s Y; E]) \circ \underline {\mathscr  H}^{c\ell}_* ([Y \xleftarrow q W \xrightarrow t Z; F]).
\end{align*}
\end{proof}
As a corollary of the proof of the above proposition we get the following for $\mathscr  M_{*,*}(\mathscr V)^+_{\otimes}$, since 
$ch(E \otimes F)= ch(E) \cup ch(F)$ for the Chern character $ch$:
\begin{cor} Let $ch$ be the Chern character. Define $\underline {\mathscr  H}^{ch}_*: \mathscr  M_{*,*}(X,Y)^+_{\otimes} \to \mathscr Ab$ by
 \begin{enumerate}
 \item For an object $X$, $\underline {\mathscr  H}^{ch}_*(X) = H_*(X)\otimes \mathbb Q$ is the Borel--Moore homology theory.
 \item For a morphism $\sum_i n_i[X \xleftarrow {p_i} V_i \xrightarrow {s_i} Y; E_i] \in hom_{\mathscr M_{*,*}(\mathscr V)^+_{\otimes}}(X,Y)$
\begin{align*}
\underline {\mathscr  H}^{ch}_*(& \sum_i n_i[X \xleftarrow {p_i} V_i \xrightarrow {s_i} Y; E_i]) \\
& := \sum_i n_i(p_i)_* \Bigl(ch(E_i) \cap (s_i)^*\Bigr): \underline {\mathscr  H}^{ch}_*(Y)  \to \underline {\mathscr  H}^{ch}_*(X) .
\end{align*}

 \end{enumerate}
 Then the functor $\underline {\mathscr  H}^{ch}_*: \mathscr  M_{*,*}(\mathscr V)^+_{\otimes} \to \mathscr Ab$ is a 
 functor in the sense of
 $\underline {\mathscr  H}^{ch}_*(\alp \circ_{\otimes} \be) = \underline {\mathscr  H}^{ch}_*(\alp) \circ \underline {\mathscr  H}^{ch}_*(\be).$
\end{cor}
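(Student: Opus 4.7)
The plan is to imitate the proof of Proposition \ref{pro-M} essentially verbatim, with a single structural change: in passing from the coefficient bundle on the composite cobordism bicycle to its characteristic class, one invokes the multiplicativity of the Chern character under \emph{tensor product} rather than the multiplicativity of $c\ell$ under Whitney sum. By bilinearity of $\circ_{\otimes}$ and of the defining formula, it suffices to check, for single generators $\alp = [X \xleftarrow{p} V \xrightarrow{s} Y; E]$ and $\be = [Y \xleftarrow{q} W \xrightarrow{t} Z; F]$, the identity $\underline{\mathscr H}^{ch}_*(\alp \circ_{\otimes} \be) = \underline{\mathscr H}^{ch}_*(\alp) \circ \underline{\mathscr H}^{ch}_*(\be)$.

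First I would unfold $\alp \circ_{\otimes} \be$ according to Definition \ref{prod}, obtaining the cobordism bicycle $[X \xleftarrow{p \circ \widetilde q} V \times_Y W \xrightarrow{t \circ \widetilde s} Z;\ \widetilde q^* E \otimes \widetilde s^* F]$, and apply $\underline{\mathscr H}^{ch}_*$. The crucial replacement step is then the identity
\[
ch(\widetilde q^* E \otimes \widetilde s^* F) \;=\; \widetilde q^* ch(E) \cup \widetilde s^* ch(F),
\]
which follows from the naturality of the Chern character combined with its multiplicativity $ch(A \otimes B) = ch(A) \cup ch(B)$. This is precisely the analogue, for $\otimes$, of the identity $c\ell(\widetilde q^* E \oplus \widetilde s^* F) = \widetilde q^* c\ell(E) \cup \widetilde s^* c\ell(F)$ used in Proposition \ref{pro-M}.

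From this point onward the argument is formally identical to that in the proof of Proposition \ref{pro-M}: factor $(p\circ\widetilde q)_* = p_*\widetilde q_*$ and $(t\circ \widetilde s)^* = \widetilde s^* t^*$, regroup the cap products, apply the projection formula to move $ch(E)$ outside $\widetilde q_*$, and then invoke the base change formula $\widetilde q_* \widetilde s^* = s^* q_*$ for Chow homology to reassemble the expression as $p_*\bigl(ch(E) \cap s^*\bigr) \circ q_*\bigl(ch(F) \cap t^*\bigr)$, which is $\underline{\mathscr H}^{ch}_*(\alp) \circ \underline{\mathscr H}^{ch}_*(\be)$. The only genuinely new input beyond Proposition \ref{pro-M} is the multiplicativity of $ch$ under tensor product, a classical fact; hence there is no substantive obstacle, and the computation reduces line for line to the earlier one, which is why the result is stated as a corollary of that proof.
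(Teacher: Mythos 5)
Your proposal matches the paper's argument exactly: the paper dismisses the corollary with the one-line remark that $ch(E\otimes F)=ch(E)\cup ch(F)$, after which the computation of Proposition \ref{pro-M} carries over verbatim with $\oplus$ replaced by $\otimes$ and $c\ell$ by $ch$. You have correctly identified the single substitution needed and the rest of the chain (projection formula, base change $\widetilde q_*\widetilde s^*=s^*q_*$) is unchanged, so this is the same proof.
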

Since a smooth map $s:V \to Y$ has the relative tangent bundle $T_s$, we can make another functor as follows.
 \begin{pro}
 For two multiplicative characteristic classes $c\ell_1, c\ell_2$ (with coefficients in a ring $\Lambda$) of complex algebraic vector bundles, we define
 $\mathscr  H^{c\ell_1, c\ell_2}_*: \mathscr  M_{*,*}(\mathscr V)^+_{\oplus} \to \mathscr Ab$ by 
 \begin{enumerate}
 \item For an object $X$, $\mathscr  H^{c\ell_1, c\ell_2}_*(X) = H_*(X)\otimes \Lambda$ is the Borel--Moore homology theory  with coefficients in $\Lambda$
 \item For a morphism $\sum_i n_i[X \xleftarrow {p_i} V_i \xrightarrow {s_i} Y; E_i] \in hom_{\mathscr M_{*,*}(\mathscr V)^+_{\oplus}}(X,Y)$
 \begin{align*}
 \mathscr  H^{c\ell_1, c\ell_2}_*&( \sum_i n_i[X \xleftarrow {p_i}  V_i \xrightarrow {s_i} Y; E_i]) := \\
 & \sum_i n_i(p_i)_* \Bigl(c\ell_1(T_{s_i}) \cap c\ell_2(E_i) \cap (s_i)^*\Bigr):\mathscr  H^{c\ell_1, c\ell_2}_*(Y) \to \mathscr  H^{c\ell_1, c\ell_2}_*(X).
 \end{align*}
 \end{enumerate}
 Then 
 $\mathscr  H^{c\ell_1, c\ell_2}_*: \mathcal  M_{*,*}(\mathscr V)^+_{\oplus} \to \mathscr Ab$ is a 
 functor in the sense of
$\mathscr  H^{c\ell_1, c\ell_2}_*(\alp \circ_{\oplus} \be) = \mathscr  H^{c\ell_1, c\ell_2}_*(\alp) \circ \mathscr  H^{c\ell_1, c\ell_2}_*(\be).$
\end{pro}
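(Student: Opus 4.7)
The plan is to mirror the proofs of Proposition \ref{pro-BFM} and of the preceding proposition for $\underline{\mathscr H}^{c\ell}_*$, inserting the extra factor $c\ell_1(T_s)$ and tracking how it combines with $c\ell_2(E)$. By bilinearity of $\circ_\oplus$ and of $\mathscr H^{c\ell_1,c\ell_2}_*$, it suffices to verify the functoriality identity on generators $\alpha=[X\xleftarrow p V\xrightarrow s Y;E]$ and $\beta=[Y\xleftarrow q W\xrightarrow t Z;F]$, whose composition is $\alpha\circ_\oplus \beta=[X\xleftarrow{p\circ\widetilde q} V\times_Y W\xrightarrow{t\circ\widetilde s} Z;\widetilde q^*E\oplus \widetilde s^*F]$ as in Definition \ref{prod}.

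First I would record two compatibility identities for the relative tangent bundle. Since $s$ is smooth and $\widetilde s$ is its base change along $q$, we have $T_{\widetilde s}=\widetilde q^*T_s$. Since $t\circ\widetilde s$ is a composition of smooth morphisms, the usual short exact sequence gives $T_{t\circ\widetilde s}=T_{\widetilde s}\oplus(\widetilde s)^*T_t=\widetilde q^*T_s\oplus \widetilde s^*T_t$. Combined with the multiplicativity of $c\ell_1$ and $c\ell_2$ and naturality of characteristic classes under pullback, this yields the key rewriting
$$c\ell_1(T_{t\circ\widetilde s})\cup c\ell_2(\widetilde q^*E\oplus \widetilde s^*F)=\widetilde q^*\bigl(c\ell_1(T_s)\cup c\ell_2(E)\bigr)\cup \widetilde s^*\bigl(c\ell_1(T_t)\cup c\ell_2(F)\bigr).$$

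Next I would expand $\mathscr H^{c\ell_1,c\ell_2}_*(\alpha\circ_\oplus \beta)=(p\circ\widetilde q)_*\bigl(c\ell_1(T_{t\circ\widetilde s})\cap c\ell_2(\widetilde q^*E\oplus\widetilde s^*F)\cap(t\circ\widetilde s)^*\bigr)$, substitute the identity above, and run the same sequence of moves as in Proposition \ref{pro-BFM}: functoriality $(p\circ\widetilde q)_*=p_*\widetilde q_*$ and $(t\circ\widetilde s)^*=\widetilde s^*t^*$; compatibility of cap products with pullbacks to absorb $\widetilde q^*$ and $\widetilde s^*$; the projection formula \cite[Theorem 3.2]{Fulton-book} to pull the class $c\ell_1(T_s)\cup c\ell_2(E)$ out through $\widetilde q_*$; and finally the base-change identity $\widetilde q_*\widetilde s^*=s^*q_*$ for the Chow homology functor (which is the reason we restrict to proper-smooth correspondences). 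Regrouping the result yields exactly $p_*\bigl(c\ell_1(T_s)\cap c\ell_2(E)\cap s^*\bigr)\circ q_*\bigl(c\ell_1(T_t)\cap c\ell_2(F)\cap t^*\bigr)=\mathscr H^{c\ell_1,c\ell_2}_*(\alpha)\circ \mathscr H^{c\ell_1,c\ell_2}_*(\beta)$.

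The main obstacle is purely bookkeeping: one has to verify that the two ``Todd-like'' contributions $c\ell_1(T_s)\cup c\ell_2(E)$ and $c\ell_1(T_t)\cup c\ell_2(F)$ remain grouped correctly as the pushforwards and pullbacks are shuffled, and that the application of the projection formula sends precisely $\widetilde q^*\bigl(c\ell_1(T_s)\cup c\ell_2(E)\bigr)$ outside. Well-definedness on isomorphism classes follows, as in Lemma \ref{lem2}, from the isomorphism condition for $H_*(-)\otimes\Lambda$ together with naturality of $c\ell_1,c\ell_2$ under the isomorphism of vector bundles $E\cong h^*E'$. No new geometric input beyond multiplicativity of $c\ell_i$, naturality of the relative tangent bundle under base change, the projection formula, and flat base change in Chow homology is needed.
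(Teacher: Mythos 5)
Your proposal is correct and follows essentially the same route as the paper's proof: reduce to generators by bilinearity, expand $c\ell_1(T_{t\circ\widetilde s})\cap c\ell_2(\widetilde q^*E\oplus\widetilde s^*F)$ via $T_{t\circ\widetilde s}=\widetilde q^*T_s\oplus\widetilde s^*T_t$ and multiplicativity into $\widetilde q^*(c\ell_1(T_s)\cup c\ell_2(E))\cap\widetilde s^*(c\ell_1(T_t)\cup c\ell_2(F))$, then apply the projection formula and the flat base change $\widetilde q_*\widetilde s^*=s^*q_*$. Your extra remark about well-definedness on isomorphism classes is a sensible supplement that the paper leaves implicit.
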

\begin{proof}  It suffices to show that
\begin{align*}
\mathscr  H^{c\ell_1, c\ell_2}_*([X \xleftarrow p V & \xrightarrow s Y; E] \circ_{\oplus}  [Y \xleftarrow q W \xrightarrow t Z; F]) \\
& = \mathscr  H^{c\ell_1, c\ell_2}_*([X \xleftarrow p  V \xrightarrow s Y; E]) \circ \mathscr  H^{c\ell_1, c\ell_2}_*([Y \xleftarrow q W \xrightarrow t Z; F]).
\end{align*}
The proof is the same as that of Proposition \ref{pro-M}, but for the sake of readers' convenience we write it down.
\begin{align*}
& \mathscr  H^{c\ell_1, c\ell_2}_*  ([X \xleftarrow p V \xrightarrow s Y; E] \circ_{\oplus}  [Y \xleftarrow q W \xrightarrow t Z; F]) \\
& = \mathcal  H_{c\ell_1, c\ell_2}([X \xleftarrow {p \circ \widetilde q} V \times _Y W \xrightarrow {t \circ \widetilde s} Z;\widetilde{q}^*E \oplus \widetilde{s}^*F ])\\
& = (p \circ \widetilde q)_* \Bigr (c\ell_1(T_{t \circ \widetilde s}) \cap c\ell_2 (\widetilde {q}^*E \oplus \widetilde {s}^*F) \cap (t \circ \widetilde s)^* \Bigr) \\
& = p_* \widetilde q_*\Bigr (\bigl( c\ell_1(T_{\widetilde s}) \cup c\ell_1(\widetilde s^*T_t) \bigr) \cap c\ell_2 (\widetilde {q}^*E \oplus \widetilde {s}^*F) \cap (\widetilde s^* \circ t^*)  \Bigr) \\
& \hspace{2.5cm} \quad \text{(since $c\ell_1$ is multipicative, thus $c\ell_1(T_{t \circ \widetilde s}) = c\ell_1(T_{\widetilde s}) \cup c\ell_1(\widetilde s^*T_t)$)} \\
& = p_* \widetilde q_* \Bigr ((\widetilde q^*c\ell_1(T_s) \cup \widetilde s^*c\ell_1(T_t)) \cap (\widetilde {q}^*c\ell_2 (E) \cup \widetilde {s}^*c\ell_2(F)) \cap (\widetilde s^* \circ t^*)  \Bigr)\\
& \hspace{4.5cm} \quad \text{(since $T_{\widetilde s}= \widetilde q^*T_s$ and $c\ell_2$ is also multiplicative)} \\
& = p_* \widetilde q_* \Bigr (\widetilde q^*c\ell_1(T_s) \cap \widetilde s^*c\ell_1(T_t) \cap \widetilde {q}^*c\ell_2 (E) \cap \widetilde {s}^*c\ell_2(F) \cap (\widetilde s^* \circ t^*)  \Bigr)\\
& = p_* \widetilde q_* \Bigr (\widetilde q^*c\ell_1(T_s) \cap \widetilde {q}^*c\ell_2 (E) \cap \widetilde s^*c\ell_1(T_t) \cap \widetilde {s}^*c\ell_2(F) \cap (\widetilde s^* \circ t^*)  \Bigr)\\
& = p_* \widetilde q_* \Bigr (\widetilde q^* \bigl(c\ell_1(T_s) \cup c\ell_2 (E) \bigr) \cap \widetilde s^* \bigl (c\ell_1(T_t) \cup c\ell_2(F) \bigr)  \cap (\widetilde s^* \circ t^*)  \Bigr)
\end{align*}
\begin{align*}
& = p_* \biggr (\bigl(c\ell_1(T_s) \cup c\ell_2 (E) \bigr) \cap \widetilde q_* \Bigl (\widetilde s^* \bigl (c\ell_1(T_t) \cup c\ell_2(F) \bigr)  \cap (\widetilde s^* \circ t^*) \Bigr)  \biggr) \\
& \hspace{8cm} \quad \text{(by the projection formula)}\\
& = p_* \biggr (\bigl(c\ell_1(T_s) \cup c\ell_2 (E) \bigr) \cap \widetilde q_* \Bigl (\widetilde s^* \Bigl (\bigl (c\ell_1(T_t) \cup c\ell_2(F) \bigr)  \cap t^* \Bigr)  \biggr) \\
& = p_* \biggr (\bigl(c\ell_1(T_s) \cup c\ell_2 (E) \bigr) \cap \widetilde q_* \widetilde s^* \Bigl (\bigl (c\ell_1(T_t) \cup c\ell_2(F) \bigr)  \cap t^* \Bigr) \biggr) \\
& = p_* \biggr (\bigl(c\ell_1(T_s) \cup c\ell_2 (E) \bigr) \cap s^* q_* \Bigl (\bigl (c\ell_1(T_t) \cup c\ell_2(F) \bigr)  \cap t^* \Bigr) \biggr) \\
& \hspace{9.5cm} \quad \text{(since $\widetilde q_* \widetilde {s}^* = s^* q_*$)}\\
& = p_* \Biggr (\bigl(c\ell_1(T_s) \cup c\ell_2 (E) \bigr) \cap s^* \biggl (q_* \Bigl (\bigl (c\ell_1(T_t) \cup c\ell_2(F) \bigr)  \cap t^* \Bigr) \biggr) \Biggr)\\
& = \mathscr  H^{c\ell_1, c\ell_2}_*([X \xleftarrow p  V \xrightarrow s Y; E]) \circ \mathscr  H^{c\ell_1, c\ell_2}_*([Y \xleftarrow q W \xrightarrow t Z; F])
\end{align*}
\end{proof}
As a corollary of the proof of the above proposition we get the following for $\mathcal  M_{m, r}(X, Y)^+_{\otimes}$:
\begin{cor} For a multiplicative characteristic classes $c\ell$ (with rational coefficients) of complex vector bundles and the Chern character $ch$, we define $\mathscr  H^{c\ell, ch}_*: \mathscr  M_{*,*}(\mathscr V)^+_{\otimes} \to \mathscr Ab$ by
 \begin{enumerate}
 \item For an object $X$, $\mathscr  H^{c\ell, ch}_*(X) = H_*(X)\otimes \mathbb Q$ is the Borel--Moore homology theory with rational coefficients.
 \item For a morphism $\sum_i n_i[X \xleftarrow {p_i} V_i \xrightarrow {s_i} Y; E_i] \in hom_{\mathscr M_{*,*}(\mathscr V)^+_{\otimes}}(X,Y)$
 \begin{align*}
 & \mathscr  H^{c\ell, ch}_*( \sum_i n_i[ X \xleftarrow {p_i} V_i \xrightarrow {s_i} Y; E_i]) := \\
 & \hspace{0.5cm} \sum_i n_i(p_i)_* \Bigl(c\ell(T_{s_i}) \cap ch(E_i) \cap (s_i)^*\Bigr): \mathscr  H^{c\ell, ch}_*(Y) \to \mathscr  H^{c\ell, ch}_*(X).
 \end{align*}
 \end{enumerate}
 Then 
 $\mathscr  H^{c\ell, ch}_*: \mathscr  M_{*,*}(\mathscr V)^+_{\otimes} \to \mathscr Ab$ is a 
 functor in the sense of
 $\mathscr  H^{c\ell, ch}_*(\alp \circ_{\otimes} \be) = \mathscr  H^{c\ell, ch}_*(\alp) \circ \mathscr  H^{c\ell, ch}_*(\be).$
\end{cor}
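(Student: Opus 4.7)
The plan is to imitate the proof of the preceding proposition line by line, replacing $c\ell_2$ by $ch$ and using the identity $ch(E \otimes F) = ch(E) \cup ch(F)$ where the Whitney-sum identity $c\ell_2(E \oplus F) = c\ell_2(E) \cup c\ell_2(F)$ was used before. By bilinearity of the product $\circ_\otimes$ and linearity of pushforward, pullback and cap product, it suffices to handle a single composition $[X \xleftarrow p V \xrightarrow s Y;E] \circ_\otimes [Y \xleftarrow q W \xrightarrow t Z;F]$; the general case for $\sum_i n_i[\cdot] \circ_\otimes \sum_j m_j[\cdot]$ then follows automatically.

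The first step is to expand the left-hand side using Definition \ref{prod}, which yields
$(p \circ \widetilde q)_* \bigl( c\ell(T_{t \circ \widetilde s}) \cap ch(\widetilde q^* E \otimes \widetilde s^* F) \cap (t \circ \widetilde s)^*\bigr).$
Next I would decompose the two characteristic factors: the relative tangent bundle splits as $T_{t \circ \widetilde s} = T_{\widetilde s} \oplus \widetilde s^* T_t$, and moreover $T_{\widetilde s} = \widetilde q^* T_s$ by base change, so by multiplicativity of $c\ell$ one obtains $c\ell(T_{t \circ \widetilde s}) = \widetilde q^* c\ell(T_s) \cup \widetilde s^* c\ell(T_t)$. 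Simultaneously the multiplicativity of $ch$ on tensor products gives $ch(\widetilde q^* E \otimes \widetilde s^* F) = \widetilde q^* ch(E) \cup \widetilde s^* ch(F)$. Substituting these two expansions and rearranging cup products, the integrand regroups as $\widetilde q^*\bigl(c\ell(T_s) \cup ch(E)\bigr) \cap \widetilde s^*\bigl(c\ell(T_t) \cup ch(F)\bigr) \cap \widetilde s^* t^*$.

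The third step is to absorb $\widetilde q^*(-)$ past $\widetilde q_*$ using the projection formula, then rewrite $\widetilde s^*\bigl(c\ell(T_t) \cup ch(F)\bigr) \cap \widetilde s^* t^*$ as $\widetilde s^*\bigl((c\ell(T_t) \cup ch(F)) \cap t^*\bigr)$ by functoriality of pullback with cap, and finally apply the base change identity $\widetilde q_* \widetilde s^* = s^* q_*$ (valid because the Chow homology functor satisfies the base change formula in the fiber square of \eqref{product}, and $s$ is smooth hence flat). Pushing $c\ell(T_s) \cup ch(E)$ back outside $s^*$ via one more projection-formula step, the expression factors exactly as $\mathscr H^{c\ell,ch}_*([X \xleftarrow p V \xrightarrow s Y;E]) \circ \mathscr H^{c\ell,ch}_*([Y \xleftarrow q W \xrightarrow t Z;F])$, which is what we want.

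The only genuine obstacle lies in step two, namely keeping the cup-product bookkeeping correct when interchanging $\widetilde q^* c\ell(T_s)$ and $\widetilde s^* ch(F)$ and in verifying that the projection formula applies with the correct factor on each side; once the two multiplicativity identities are in place and $\widetilde q_* \widetilde s^* = s^* q^*$ is invoked at the right moment, the computation is entirely parallel to the proof of the preceding proposition, so no new geometric input is required.
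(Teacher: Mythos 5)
Your plan matches the paper exactly: the paper states this as a corollary of the preceding proposition's proof, replacing $c\ell_2$ by $ch$ and invoking multiplicativity of the Chern character over tensor products, which is precisely the substitution you carry out. One tiny bookkeeping slip: after applying the base change identity $\widetilde q_*\widetilde s^* = s^* q_*$ the expression is already in the factored form $p_*\bigl((c\ell(T_s)\cup ch(E))\cap s^*(q_*(\cdots))\bigr)$, so no second projection formula is needed at the very end.
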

\begin{thm} 
\begin{enumerate}
\item Let us define $\mathscr G_0^{\otimes}: \mathscr M_{*,*}(\mathscr V)^+_{\otimes} \to \mathscr Ab$ by
\begin{enumerate}
\item For an object $X$, $\mathscr G_0^{\otimes}(X) =G_0(X)$,
\item For a morphism $\sum_i n_i[X \xleftarrow {p_i} V_i \xrightarrow {s_i} Y; E_i] \in hom_{\mathscr M_{*,*}(\mathscr V)^+_{\otimes}}(X,Y) :=\mathcal  M_{*,*}(X,Y)^+_{\otimes}$,
\begin{align*}
& \mathscr G_0^{\otimes}(\sum_i n_i[ X \xleftarrow {p_i} V_i \xrightarrow {s_i} Y; E_i]) :=\\
& \hspace{1cm}  \sum_i n_i (p_i)_* \Bigl ([E_i] \otimes (s_i)^* \Bigr):\mathscr G_0^{\otimes}(Y) \to \mathscr G_0^{\otimes}(X).
\end{align*}
\end{enumerate}
Then $\mathscr G_0^{\otimes}: \mathscr M_{*,*}(\mathscr V) ^+_{\otimes} \to \mathscr Ab$ is a 
functor in the sense of 
$\mathscr G_0^{\otimes}(\alp \circ_{\otimes} \be) = \mathscr G_0^{\otimes}(\alp)  \circ \mathscr G_0^{\otimes}(\be).$
\item For the Todd class $td$ and the Chern character $ch$, we define $\mathscr H^{td, ch}_*: \mathscr M_{*,*}(\mathscr V)^+_{\otimes} \to \mathscr Ab$ by
\begin{enumerate}
\item $\mathscr H^{td, ch}_*(X) := H_*(X)\otimes \mathbb Q$,
\item For 
$\sum_i n_i[X \xleftarrow {p_i} V_i \xrightarrow {s_i} Y; E_i] \in hom_{\mathscr M_{*,*}(\mathscr V)^+_{\otimes}}(X,Y)=\mathcal  M_{*,*}(X,Y)^+_{\otimes},$
 \begin{align*}
& \mathscr H^{td, ch}_* \bigl ( \sum_i n_i[X \xleftarrow {p_i}  V_i \xrightarrow {s_i} Y; E_i] \bigr):= \\
& \sum_i n_i {p_i}_*\bigl(td(T_{s_i}) \cap ch(E_i) \cap (s_i)^* \bigr):\mathscr H^{td, ch}_*(Y)  \to \mathscr H^{td, ch}_*(X). 
 \end{align*}
\end{enumerate}
Then $\mathscr H^{td, ch}_*: \mathscr M_{*,*}(\mathscr V)^+_{\otimes} \to \mathscr Ab$ is a 
functor in the sense of 
$\mathscr H^{td, ch}_*(\alp \circ_{\otimes} \be) = \mathscr H_{td, ch}(\alp) \circ \mathscr H_{td, ch}(\be).$
\item  Baum--Fulton--MacPherson's Todd class transformation $td_*^{\op{BFM}}$ gives rise to the natural transformation of these two 
functors $\mathscr G_0^{\otimes}: \mathscr M_{*,*}(\mathscr V)^+_{\otimes} \to \mathscr Ab$ and $\mathscr H^{td, ch}_*: \mathscr M_{*,*}(\mathscr V)^+_{\otimes} \to \mathscr Ab$:
$$td_*^{\op{BFM}}: \mathscr G_0^{\otimes}(-) \to \mathscr H^{td, ch}_*(-).$$
\end{enumerate}
\end{thm}
\begin{proof} It suffices to show that $td_*^{\op{BFM}}: \mathscr G_0^{\otimes} \to \mathscr H^{td, ch}_*$ is a natural transformation, i.e., the following diagram commutes for $(X \xleftarrow p V \xrightarrow s Y; E)$:
\begin{equation*}
\xymatrix{
G_0(X)\ar[d]_{td_*^{\op{BFM}}}  && G_0(V) \ar[ll]_{p_*} \ar[d]_{td_*^{\op{BFM}}} &&& G_0(Y) \ar[lll]_{[E] \otimes s^*} \ar[d]^{td_*^{\op{BFM}}} \\
H_*(X)\otimes \mathbb Q && H_*(V)\otimes \mathbb Q \ar[ll]^{p_*} &&& H_*(Y)\otimes \mathbb Q \ar[lll]^{td(T_s) \cap ch(E) \cap s^*}
}
\end{equation*}
In other words it suffices to show the commutativity of the square on the right hand side, i.e., for an element $\theta \in G_0(Y)$
$$
td_*^{\op{BFM}}([E] \otimes s^*(\theta)) = td(T_s) \cap ch(E) \cap s^*(td_*^{\op{BFM}}(\theta)).
$$
It follows from \cite[Theorem, p.119]{BFM} (see also \cite[Theorem 18.2, (2) Module]{Fulton-book}) that for any class $\be \in K^0(V)$ the $K$-theory of complex algebraic vector bundles, and any element $\alp \in G_0(V)$, we have
\begin{equation}\label{equ1}
td_*^{\op{BFM}}(\be \otimes \alp) = ch(\be) \cap td_*^{\op{BFM}}(\alp).
\end{equation}
Hence 
\begin{align*}
& td_*^{\op{BFM}}([E] \otimes s^*(\theta)) \\
& = ch(E) \cap td_*^{\op{BFM}}(s^*(\theta)) \quad \text{(by (\ref{equ1}) )} \\
& = ch(E) \cap td(T_s) \cap s^*(td_*^{\op{BFM}}(\theta))) \, \, \text {(by the Verdier--Riemann--Roch formula)}\\
& = td(T_s) \cap ch(E) \cap s^*(td_*^{\op{BFM}}(\theta))).
\end{align*}
\end{proof}
\begin{rem}
The above natural transformation $td_*^{\op{BFM}}: \mathscr G_0^{\otimes}(-) \to \mathscr H^{td, ch}_*(-)$ is a $\mathscr M_{*,*}(\mathscr V)^+_{\otimes}$-version of the natural transformation
$td_*^{\op{BFM}}: \mathscr  G_0(-) \to \mathscr  H^{Todd}_*(-)$ of two 
functors $\mathscr G_0: \mathscr  Corr^+_{pro-sm} \to \mathscr  Ab$ and $\mathscr  H^{Todd}_*: \mathscr  Corr^+_{pro-sm} \to \mathscr  Ab$.
\end{rem}
We define the operations of pushforward and pullback of cobordism bicycles of vector bundles for later use. We can of course discuss plausible or natural relations among the operations of product, pushforward and pullback of cobordism bicycles of complex algebraic vector bundles, but they are treated in \cite{Yokura-bicycle}.
\begin{defn}\label{push-pull}
\begin{enumerate} 
\item (Pushforward) 
\begin{enumerate}
\item For a \emph{proper} map $f:X \to X'$, $f_*:\mathcal  M_{m,r}(X,Y)^+ \to \mathcal  M_{m,r}(X',Y)^+$ is defined by
$$f_*([X \xleftarrow{p} V  \xrightarrow{s} Y; E]):= [X' \xleftarrow{f \circ p} V  \xrightarrow{s} Y; E].$$
\item For a \emph{smooth} map $g:Y \to Y'$, 
$$g_*:\mathcal  M_{m,r}(X,Y)^+ \to \mathcal  M_{m+ \op{dim}g,r}(X,Y')^+$$ is defined by
$$g_*([X \xleftarrow{p} V  \xrightarrow{s} Y; E]):= [X \xleftarrow{p} V  \xrightarrow{g \circ s} Y'; E].$$
(Note that $m = \op{dim} s$ and $\op{dim} (g \circ s) = \op{dim}s + \op{dim}g = m + \op{dim}g$.)
\end{enumerate}

\item (Pullback) 
\begin{enumerate}
\item For a \emph{smooth} map $f:X' \to X$, 
$$f^*:\mathcal  M_{m,r}(X,Y)^+ \to \mathcal  M_{m+\op{dim}f,r}(X',Y)^+$$
 is defined by
$$f^*([X \xleftarrow{p} V  \xrightarrow{s} Y; E]) := [X' \xleftarrow{p'} X' \times_X V \xrightarrow{s \circ f'} Y; (f')^*E].$$
Here we consider the following commutative diagram:
$$\xymatrix{
& (f')^*E \ar[d] \ar[r] & E \ar[d] \\
& X' \times_X V \ar[dl]_{p'} \ar[r]^{\qquad f'} & V \ar [dl]^{p} \ar [dr]^{s}\\
X' \ar[r]_f & X &  & Y }
$$
(Note that the left diamond is a fiber square, thus $f':X'\times_X V \to V$ is smooth and $p':X'\times_X V \to X'$ is proper. Note that $\op{dim} f' = \op{dim}$ and $\op{dim}(s \circ f') = \op{dim} s + \op{dim}f' = m + \op{dim} f$.)
\item For a \emph{proper} map $g:Y' \to Y$, $g^*:\mathcal  M_{m,r}(X,Y)^+ \to \mathcal  M_{m,r}(X,Y')^+$ is defined by
$$ g^*([X \xleftarrow{p} V  \xrightarrow{s} Y; E]) := [X \xleftarrow{p \circ g'} V \times _Y Y' \xrightarrow{s'} Y'; (g')^*E].$$
Here we consider the following commutative diagram:
$$\xymatrix{
& E \ar[d] & (g')^*E \ar[l] \ar[d] \\
& V \ar [dl]^{p} \ar [dr]_{s} & V \times _Y Y' \ar[l]_{g' \quad } \ar[dr]^{s'}\\
X &  & Y & Y' \ar[l]^g}
$$
(Note that the right diamond is a fiber square, thus $s':V\times_Y Y'\to Y'$ is smooth and $g':V\times_Y Y'\to V$ is proper, and $\op{dim} s = \op{dim} s'$.)
\end{enumerate}
\end{enumerate}
\end{defn}
\begin{rem} We remark that when we deal with a smooth map $f$ or $g$, both in pushforward and pullback, the first grading is added by the relative dimension $\op{dim} f$ or $\op{dim} g$, but that when we deal with proper maps, the first grading is not changed. In both pushforward and pullback, the second grading (referring to the dimension of vector bundle) is not changed.
\end{rem}

Let $f:X \to Y$ be a proper and smooth map. Then it follows that we have the pushforward $f_*:\mathcal  M_{m,r}(X,X)^+ \to \mathcal  M_{m,r}(Y,X)^+$ for proper $f$ and the pushforward $f_*:\mathcal  M_{m,r}(Y,X)^+ \to \mathcal  M_{m+ \op{dim} f,r}(Y,Y)^+$ for smooth $f$. 
The composition 
$f_* \circ f_*:\mathcal  M_{m,r}(X,X)^+ \to \mathcal  M_{m+\op{dim}f,r}(Y,Y)^+$ is a pushforward, denoted by $f_{**}$. Namely we have
$f_{**}([X \xleftarrow p V \xrightarrow s X; E]) = [Y \xleftarrow {f \circ p} V \xrightarrow {f \circ s} Y; E].$
This is clearly covariantly functorial for proper and smooth maps.

Similarly, it follows that for a proper map $f$ we have the pullback  $f^*:\mathcal  M_{m,r}(Y,Y)^+ \to \mathcal  M_{m,r}(Y,X)^+$ and for a smooth map $f$ the pullback $f^*:\mathcal  M_{m,r}(Y,X)^+ \to \mathcal  M_{m+ \op{dim} f,r}(Y,Y)^+$. Then the composition of these two pullbacks $f^* \circ f^*:\mathcal  M_{m,r}(X,X)^+ \to \mathcal  M_{m,r}(Y,Y)^+$ is the pullback, denoted by $f^{**}$. Namely we have
$f^{**}([X \xleftarrow p V \xrightarrow s X; E]) = [X \xleftarrow {p' \circ f''} V' \times _V V'' \xrightarrow {s' \circ \widetilde f} X; (\widehat f\times_V f')^*E].$
Here we consider the following fiber squares:
{\tiny
$$
\xymatrix{
& &  & (\widehat f\times_V f')^*E \ar[dl]\\
&  & \qquad (X \times_Y V) \times_V (V \times_Y X) \ar[dl]_{f''} \ar[drr]^{\widetilde f}  \ar[dd]^{\widehat f\times_V f'}  & \\
& X \times_Y V \ar[dl]_{p'} \ar[dr]^{\widehat f} & & E \ar[dl]  &   V \times_Y X \ar[dll]^{f'} \ar[dr]^{s'}\\
X \ar[dr]_f & & V \ar[dl]^{p} \ar[dr]_s  && &  X \ar[dll]_f\\
& Y & &  Y.  
}
$$
}
Here we should note that $X \times_Y V$ on the left is $X \times_{f=p} V$ and $V \times_Y X$ on the right is $X \times_{s=f} V$, thus they are different.
Hence $(X \times_Y V) \times_V (V \times_Y X)$ is $(X \times_{f=p} V) \times_V (X \times_{s=f} V)$.
\begin{pro}\label{pro-sm} Let $f:X \to Y$ be a proper and smooth morphism and let $c\ell_1, c\ell_2$ be two multiplicative characteristic classes of complex algebraic vector bundles. Then we have the following commutative diagrams:
\begin{enumerate}
\item
$$\xymatrix{
\mathcal  M_{m,r}(X,X)^+ \ar[r]^{f_{**}} \ar[d]_{\mathcal  H^{c\ell_1, c\ell_2}_*}  &  \mathcal  M_{m+\op{dim}f,r}(Y,Y)^+  \ar[d]^{\mathcal  H^{c\ell_1, c\ell_2}_*} \\
Hom \Bigl(H_*(X), H_*(X) \Bigr)  \ar[r]_{f_{\bigstar \bigstar}} & Hom \Bigl(H_*(Y), H_*(Y) \Bigr)}
$$
where $f_{\bigstar \bigstar}:Hom \Bigl(H_*(X), H_*(X) \Bigr) \to Hom \Bigl(H_*(Y), H_*(Y) \Bigr)$ is 

defined by
$$f_{\bigstar \bigstar}(\mathcal  H):= f_* \circ \mathcal  H \circ (c\ell_1(T_f) \cap f^*),  \quad \mathcal  H \in Hom \Bigl(H_*(X), H_*(X) \Bigr).$$
\item
$$\xymatrix{
\mathcal  M_{m,r}(Y,Y)^+ \ar[r]^{f^{**}} \ar[d]_{\mathcal  H^{c\ell_1, c\ell_2}_*}  &  \mathcal  M_{m+\op{dim}f,r}(X,X)^+  \ar[d]^{\mathcal  H^{c\ell_1, c\ell_2}_*} \\
Hom \Bigl(H_*(Y), H_*(Y) \Bigr)  \ar[r]_{f^{\bigstar \bigstar}} & Hom \Bigl(H_*(X), H_*(X) \Bigr)}
$$
where $f^{\bigstar \bigstar}:Hom \Bigl(H_*(Y), H_*(Y) \Bigr) \to Hom \Bigl(H_*(X), H_*(X) \Bigr)$ is 

defined by
$$f^{\bigstar \bigstar}(\mathcal  H):= (c\ell_1(T_f) \cap f^*)  \circ \mathcal  H \circ f_*,  \quad \mathcal  H \in Hom \Bigl(H_*(Y), H_*(Y) \Bigr)$$
\end{enumerate}
\end{pro}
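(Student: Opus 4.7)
By bilinearity of both sides, it suffices in each part to test a single generator
\[
\alpha = [X \xleftarrow{p} V \xrightarrow{s} X; E]\quad(\text{part (1)}),\qquad \alpha = [Y \xleftarrow{p} V \xrightarrow{s} Y; E]\quad(\text{part (2)}),
\]
against an arbitrary class $\gamma$, and verify that the two compositions agree. The proofs will follow the same pattern as that of Proposition \ref{pro-BFM}, resting on three ingredients: the additivity $T_{g_1\circ g_2}=T_{g_2}\oplus g_2^*T_{g_1}$ for composable smooth morphisms, together with multiplicativity of $c\ell_1$; the base-change identities for Chow homology in the fibre squares produced by the pullback definitions; and the projection formula.

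For part (1), I would unfold
\[
\mathcal  H^{c\ell_1,c\ell_2}_*(f_{**}(\alpha))(\gamma) = (f\circ p)_*\bigl(c\ell_1(T_{f\circ s})\cap c\ell_2(E)\cap (f\circ s)^*\gamma\bigr).
\]
Since $T_{f\circ s} = T_s\oplus s^*T_f$, multiplicativity gives $c\ell_1(T_{f\circ s}) = c\ell_1(T_s)\cup s^*c\ell_1(T_f)$. Using functoriality of pullback and the identity $s^*a\cap s^*b = s^*(a\cap b)$ to collect the $s^*$-terms, the right-hand side rewrites as
\[
f_*p_*\bigl(c\ell_1(T_s)\cap c\ell_2(E)\cap s^*(c\ell_1(T_f)\cap f^*\gamma)\bigr),
\]
which is exactly $f_*\circ\mathcal  H^{c\ell_1,c\ell_2}_*(\alpha)\circ(c\ell_1(T_f)\cap f^*)(\gamma) = f_{\bigstar\bigstar}(\mathcal  H^{c\ell_1,c\ell_2}_*(\alpha))(\gamma)$ by definition. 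This part is a direct algebraic rearrangement.

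For part (2) the bookkeeping is heavier. Unfolding $f^{**}$ from Definition \ref{push-pull} and reading off the large diagram preceding the statement gives
\[
f^{**}(\alpha) = [X\xleftarrow{p'\circ f''} (X\times_Y V)\times_V(V\times_Y X) \xrightarrow{s'\circ\widetilde f} X;\,(\widehat f\times_V f')^*E].
\]
The crucial tangent-bundle identities are $T_{\widehat f}=(p')^*T_f$, $T_{s'}=(f')^*T_s$, and $T_{\widetilde f}=(f'')^*T_{\widehat f}=(p'\circ f'')^*T_f$, together with the coincidence $\widehat f\circ f''=f'\circ\widetilde f=\widehat f\times_V f'$ of maps to $V$. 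Combining these with $T_{s'\circ\widetilde f}=T_{\widetilde f}\oplus\widetilde f^*T_{s'}$ and multiplicativity of $c\ell_1$, the integrand of $\mathcal  H^{c\ell_1,c\ell_2}_*(f^{**}(\alpha))(\gamma)$ factors as
\[
(p'\circ f'')^*c\ell_1(T_f)\cap(\widehat f\times_V f')^*\bigl(c\ell_1(T_s)\cap c\ell_2(E)\bigr)\cap(s'\circ\widetilde f)^*\gamma.
\]
The projection formula along the proper map $p'\circ f''$ then pulls $c\ell_1(T_f)$ outside the pushforward. To identify the remainder with $f^*p_*\bigl(c\ell_1(T_s)\cap c\ell_2(E)\cap s^*(f_*\gamma)\bigr)$, I would apply the base-change formula $s^*f_*=f'_*(s')^*$ in the fibre square for the smooth map $s$, together with $f^*p_*=(p'\circ f'')_*(\widehat f\times_V f')^*$ arising from the composite square, both valid in Chow homology. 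The result is the right-hand side $f^{\bigstar\bigstar}(\mathcal  H^{c\ell_1,c\ell_2}_*(\alpha))(\gamma)$.

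The main obstacle is purely combinatorial: two independent fibre squares enter simultaneously (one from the proper pullback in the second slot, one from the smooth pullback in the first slot), and one must track carefully which pullback acts on $c\ell_1(T_f)$, which on $c\ell_1(T_s)$, and which on $c\ell_2(E)$, while commuting cap-products past the appropriate proper pushforwards and smooth pullbacks. No new conceptual ingredient is needed beyond those already used in the proofs of Propositions \ref{pro-BFM} and \ref{pro-M}.
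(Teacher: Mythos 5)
Part (1) of your plan is correct: once you write $T_{f\circ s}=T_s\oplus s^*T_f$, use multiplicativity and collect the $s^*$-factors, the identity follows with no further tools, and the paper (which only proves part (2)) clearly regards this direction as routine.

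For part (2) your decomposition of the tangent bundle is right and agrees, after reordering, with the paper's: you get $c\ell_1(T_{s'\circ\widetilde f})=(p'\circ f'')^*c\ell_1(T_f)\cup(\widehat f\times_V f')^*c\ell_1(T_s)$, and the projection formula along $p'\circ f''$ pulls $c\ell_1(T_f)$ out. However, the step you then invoke, the operator identity
$$f^*p_*=(p'\circ f'')_*(\widehat f\times_V f')^*,$$
is false, and it is precisely here that the real content of the proposition lies. Since $\widehat f\times_V f'=\widehat f\circ f''$, the right-hand side is $(p')_*(f'')_*(f'')^*(\widehat f)^*$, so the claimed equality with $f^*p_*=(p')_*(\widehat f)^*$ would require $(f'')_*(f'')^*=\operatorname{id}$; this fails already when $f$ has positive relative dimension (take $V=Y$, $p=s=\operatorname{id}_Y$, so $f''$ becomes the projection $X\times_Y X\to X$, and $(f'')_*(f'')^*$ annihilates top-degree classes). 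The square you call the ``composite square'' is not Cartesian in the way a base-change formula requires: $(X\times_Y V)\times_V(V\times_Y X)$ is not $X\times_Y V$.

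The correct identification cannot avoid interleaving projection formulas with the two genuinely Cartesian squares. Concretely, after extracting $c\ell_1(T_f)$ you must (i) use the base change $(f'')_*(\widetilde f)^*=(\widehat f)^*f'_*$ for the inner square over $V$, (ii) apply the projection formula along $f'$ to pull $c\ell_2(E)$ and $c\ell_1(T_s)$ out, (iii) use the base change $f'_*(s')^*=s^*f_*$ (which you do have), and (iv) finally apply $(p')_*(\widehat f)^*=f^*p_*$. Your proposal skips steps (i) and (ii) by collapsing them into the wrong operator identity, so as written the argument does not close. The rest of the plan, and the ingredients you list, are the right ones; you just need to replace the asserted shortcut with the actual two-stage base-change/projection chain.
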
 
\begin{proof} 
We just show the second one. Let $[Y \xleftarrow p V \xrightarrow s Y;E] \in \mathcal  M_{m,r}(Y,Y)^+$.
\begin{align*}
& \mathcal  H_{c\ell_1, c\ell_2} \Bigl (f^{**}([Y \xleftarrow p V \xrightarrow s Y;E]) \Bigr) \\
& =  \mathcal  H_{c\ell_1, c\ell_2} ([X \xleftarrow {p' \circ f''} V' \times _V V'' \xrightarrow {s' \circ \widetilde f} X; (\widehat f\times_V f')^*E])\\
& = (p' \circ f'')_* \Bigl (c\ell_1(T_{s' \circ \widetilde f}) \cap c\ell_2 \bigl ((\widehat f\times_V f')^*E \bigr) \cap (s' \circ \widetilde f)^* \Bigr)\\
& = (p')_* (f'')_* \Bigl (c\ell_1((f'')^*T_{s \circ \widehat f}) \cap c\ell_2 \bigl ((\widehat f\times_V f')^*E \bigr) \cap (s' \circ \widetilde f)^* \Bigr) \\
& \hspace{8cm} \quad \text{(since $T_{s' \circ \widetilde f} = (f'')^*T_{s \circ \widehat f}$)} \\
& = (p')_* (f'')_* \Bigl ((f'')^*c\ell_1(T_{s \circ \widehat f}) \cap c\ell_2 \bigl (f' \circ \widetilde f)^*E \bigr) \cap (s' \circ \widetilde f)^* \Bigr) \\
& \hspace{8cm} \quad \text{(since $\widehat f\times_V f'= f' \circ \widetilde f$)} \\
& = (p')_* (f'')_* \Bigl ((f'')^*c\ell_1(T_{s \circ \widehat f}) \cap (\widetilde f)^* (f')^*c\ell_2(E) \cap \widetilde f^* (s')^* \Bigr) \\
& = (p')_*\biggl (c\ell_1(T_{s \circ \widehat f}) \cap (f'')_* (\widetilde f)^* \Bigl ((f')^*c\ell_2(E) \cap (s')^* \Bigr )  \biggr) \\
& \hspace{7.5cm} \quad \text{(by the projection formula)} \\
& = (p')_*\biggl (c\ell_1(T_{s \circ \widehat f}) \cap (\widehat f)^* f'_* \Bigl ((f')^*c\ell_2(E) \cap (s')^* \Bigr )  \biggr) \\
& \hspace{8cm} 
\quad \text{(since $(f'')_* (\widetilde f)^* = (\widehat f)^* f'_*$ )} \\
& = (p')_*\biggl (c\ell_1(T_{s \circ \widehat f}) \cap (\widehat f)^* \Bigl (c\ell_2(E) \cap f'_*(s')^* \Bigr )  \biggr) 
\quad \text{(by the projection formula)} \\
\end{align*}
\begin{align*}
& = (p')_*\biggl (\bigl( c\ell_1(T_{\widehat f}) \cup c\ell_1((\widehat f)^*T_s) \bigr) \cap (\widehat f)^* \Bigl (c\ell_2(E) \cap f'_*(s')^* \Bigr )  \biggr) \\
& \hspace{1.7cm} \text{(since $c\ell_1$ is multiplicative, thus $c\ell_1(T_{s \circ \widehat f})= c\ell_1(T_{\widehat f}) \cup c\ell_1((\widehat f)^*T_s)$ )}\\
& = (p')_*\biggl ((p')^*c\ell_1(T_f) \cap (\widehat f)^* \Bigl (c\ell_1(T_s) \cap c\ell_2(E) \cap f'_*(s')^* \Bigr )   \biggr) \\
& \hspace {9cm} \text{(since $T_{\widehat f}= (p')^*T_f$)} \\
& = c\ell_1(T_f) \cap (p')_*(\widehat f)^* \Bigl (c\ell_1(T_s) \cap c\ell_2(E) \cap f'_*(s')^* \Bigr ) \, \text{(by the projection formula)}  \\
& = c\ell_1(T_f) \cap f^* p_* \Bigl (c\ell_1(T_s) \cap c\ell_2(E) \cap s^*f_* \Bigr ) \\
& \hspace{5.5cm}  \quad \text{(since $(p')_*(\widehat f)^*= f^*p_*$ and 
$f'_*(s')^*=s^*f_*$)}  \\
& = c\ell_1(T_f) \cap f^* \biggl (p_* \Bigl (c\ell_1(T_s) \cap c\ell_2(E) \cap s^* \Bigr ) \biggr) f_*   \\
& = f^{\bigstar \bigstar} \Bigl (\mathcal  H_{c\ell_1, c\ell_2}([Y \xleftarrow p V \xrightarrow s Y;E]) \Bigr).
\end{align*}
Therefore we get that $\mathcal  H^{c\ell_1, c\ell_2}_* \circ f^{**} = f^{\bigstar \bigstar} \circ \mathcal  H^{c\ell_1, c\ell_2}_*.$
\end{proof}
\begin{rem} Finally we remark that given a cobordism bicycle of vector bundle $[X \xleftarrow p V \xrightarrow s Y; E]$, we can consider a canonical functor of Fourier--Mukai type on derived categories of coherent sheaves. Let $D^b\op{Coh}(X)$ denote the derived category of bounded complexes of coherent sheaves on $X$. Then we have the following functor of Fourier--Mukai
$$\mathcal  H([X \xleftarrow p V \xrightarrow s Y; E]):= p_*([T_s] \otimes [E]\otimes s^*): D^b\op{Coh}(Y) \to D^b\op{Coh}(X).$$
Here a vector bundle is considered as a locally free sheaf, thus a coherent sheaf. 
We will treat this aspect in a different paper. Here we just remark that the $D^b\op{Coh}$-analogue of the above Proposition \ref{pro-sm} is as follows:
{\small 
$$\xymatrix{
\mathcal  M_{m,r}(X,X)^+ \ar[r]^{f_{**}} \ar[d]_{\mathcal  H}  &  \mathcal  M_{m+\op{dim}f,r}(Y,Y)^+  \ar[d]^{\mathcal  H} \\
Functor \Bigl(D^b\op{Coh}(X), D^b\op{Coh}(X) \Bigr)  \ar[r]_{f_{\bigstar \bigstar}} & Functor \Bigl(D^b\op{Coh}(Y), D^b\op{Coh}(Y) \Bigr)}
$$
}
where 
{\small 
$$f_{\bigstar \bigstar}:Functor ( D^b\op{Coh}(X), D^b\op{Coh}(X) \Bigr) \to Functor \Bigl(D^b\op{Coh}(Y), D^b\op{Coh}(Y) \Bigr)$$ 
}
is defined by
$$f_{\bigstar \bigstar}(\mathcal  H):= f_* \circ \mathcal  H \circ ([T_f] \otimes  f^*),  \quad \mathcal  H \in Functor \Bigl(D^b\op{Coh}(X), D^b\op{Coh}(X) \Bigr).$$
\item
{\small 
$$\xymatrix{
\mathcal  M_{m,r}(Y,Y)^+ \ar[r]^{f^{**}} \ar[d]_{\mathcal  H}  &  \mathcal  M_{m+\op{dim}f,r}(X,X)^+  \ar[d]^{\mathcal  H} \\
Functor \Bigl(D^b\op{Coh}(Y), D^b\op{Coh}(Y) \Bigr)  \ar[r]_{f^{\bigstar \bigstar}} & Functor \Bigl(D^b\op{Coh}(X), D^b\op{Coh}(X) \Bigr)}
$$
}
where 
{\small
$$f^{\bigstar \bigstar}:Functor \Bigl(D^b\op{Coh}(Y), D^b\op{Coh}(Y) \Bigr) \to Functor \Bigl(D^b\op{Coh}(X), D^b\op{Coh}(X) \Bigr)$$
}
 is defined by
$$f^{\bigstar \bigstar}(\mathcal  H):= ([T_f] \otimes f^*)  \circ \mathcal  H \circ f_*,  \quad \mathcal  H \in Functor \Bigl(D^b\op{Coh}(Y), D^b\op{Coh}(Y) \Bigr).$$
\end{rem}

\subsection*{Acknowledgements}
The author would like to thank J\"org Sch\"urmann and the anonymous referee for careful reading of the paper and for their valuable comments and constructive suggestions. This work is supported by JSPS KAKENHI Grant Numbers JP16H03936 and JP19K03468.



\begin{thebibliography}{HD82}




\bibitem{An}
T. Annala,  {\it Bivariant derived algebraic cobordism}, to appear in J. of Algebraic Geometry, 2020.


\bibitem{An2}
   T. Annala, {\it Chern classes in precobordism theories}, arXiv:1911.12493.

\bibitem{AY}
T. Annala and S. Yokura, {\it Bivariant algebraic cobordisms with bundles}, arXiv:1911. 12484.

\bibitem{BB}
P. Baum and J. Block, {\it Equivariant bicyclces on singular spaces},
C.R. Acad. Sci. Paris, t. {\bf 311}, S\'erie I (1990), 115--120.

\bibitem{BFM}
P. Baum, W. Fulton and R. MacPherson, {\it Riemann--Roch for singular varieties},
Publ. Math. I.H.E.S. {\bf 45} (1975), 101--145.


\bibitem{BSY} 
J.-P. Brasselet, J. Sch\"urmann and S. Yokura, {\it Hirzebruch classes and motivic Chern classes for singular spaces},  Journal of Topology and Analysis, {\bf 2}, No.1 (2010), 1--55 .

\bibitem{CS}
S. E. Cappell and J. L. Shaneson, {\it  Stratifiable maps and topological invariants},
   J. Amer. Math. Soc. {\bf 4} (1991), 521--551.

\bibitem{CG}
N. Chriss and V. Ginzburg, {\it Representation Theory and Complex Geometry}, Modern Birkh\"auser Classics, 2009.

\bibitem{EM3}
H. Emerson and R. Meyer, {\it Bivariant $K$-theory via correspondences},
Adv. in Math. {\bf 225} (2010), 2883--2919.

\bibitem{Ern}
L. Ernstr\"om,
{\it Topological Radon transformations and the local Euler obstruction}, Duke Math. J.,  {\bf 76}(1994), 1--21.

\bibitem{EOY}
L. Ernstr\"om, T. Ohmoto and S. Yokura,
{\it On topological Radon transformations}, J. Pure and Applied Algebra, {\bf 120}(1997), 235-254.

\bibitem{Fulton-book} 
W. Fulton, {\it  Intersection Theory},
Springer Verlag, 1981.

\bibitem{FM}
W. Fulton and R. MacPherson, {\it Categorical frameworks for the study of singular spaces},
Memoirs of Amer. Math. Soc. {\bf 243}, 1981.

\bibitem{GR1}
D. Gaitsgory and N. Rozenblyum, {\it A Study in Derived Algebraic Geometry, Volume I: Correspondences and Duality}, Mathematical Surveys and Monogrphs, Volume {\bf 221}, Amer. Math. Soc., 2017.

\bibitem{GR2}
D. Gaitsgory and N. Rozenblyum, {\it A Study in Derived Algebraic Geometry, Volume II: Deformations, Lie Theory and Formal Geometry}, Mathematical Surveys and Monogrphs, Volume {\bf 221}, Amer. Math. Soc., 2017.

\bibitem{GK2}
 J. L. Gonzal\'ez and K. Karu, {\it Projectivity in Algebraic Cobordism},  Canadian J. Math. {\bf 67} (2015), 639--653.


\bibitem{GM}
 M.  Goresky and R. MacPherson,  {\it Intersection homology theory},
  Topology {\bf 149} (1980), 155--162.


\bibitem {Kuga2} 
M. Kuga, {\it Doctor Kuga no Sugaku Koza 2 (Doctor Kuga's Mathematics Course 2)  (in Japanese)}, Nippon Hyoron Sha Publishers, 1992.

\bibitem {Kuga} 
M. Kuga, {\it Michio Kuga's Posthumous Works (ed. by I. Satake)} (in Japanese), Lectures in Mathematical Sciences, The University of Tokyo, No.3, Graduate School of Mathematical Sciences, The University of Tokyo, 1993.

\bibitem{LM}
M. Levine and F. Morel,
{\it Algebraic Cobordism},
Springer Monographs in Mathematics, Springer-Verlag (2007).

\bibitem{LP}
M. Levine and R. Pandharipande,
{\it Algebraic cobordism revisited},
Invent. Math., {\bf 176} (2009), 63-130.

\bibitem{LeeP}
Y.-P. Lee  and R. Pandharipande,
{\it Algebraic cobordism of bundles on varieties},
J. European.Math. Soc., {\bf 14} (2012), 1081--1101.

\bibitem{Looi}
E. Looijenga, {\it Motivic measures},
S\'eminaire Bourbaki, Ast\'erisque {\bf 276} (2002), 267--297.

\bibitem{LS}
P. Lowrey and T. Sch\"urg, {\it Derived algebraic cobordism}, Journal of the Institute of Mathematics of Jussieu, {\bf 15} (2016), 407--443.

\bibitem{Mac}
R. MacPherson, {\it Chern classes for singular algebraic varieties},
Ann. of Math. {\bf 100} (1974), 423--432.


\bibitem{Scha}
P. Schapira, {\it Tomography of constructible functions}, Applied Algebra, Algebraic Algorithms and Error-Correcting Codes: 
11th International Symposium (AAECC-11 Paris, France, July 17-22), 1995 Proceedings, 427-435.

\bibitem{Sch}
J. Sch\"urmann , {\it A generalized Verdier-type Riemann-Roch theorem
for Chern--Schwartz--MacPherson classes},  arXiv:math/ 0202175v1 [math.AG]


\bibitem{Stack}
The Stacks Project: http://stacks.math.columbia.edu (2017).


\bibitem{Ver}
J.-L. Verdier, {\it Le th\'eor\`eme de Riemann--Roch pour les vari\'et\'es alg\'ebriques \'eventuellement singuli\`eres}, 
S\'eminaire N. Bourbaki, 1974-75, exp, No.{\bf 464}, 159-175.



\bibitem{Yokura-TAMS}
 S. Yokura,  {\it On Cappell--Shaneson's homology L-class of singular algebraic varieties},
   Trans. Amer. Math. Soc. {\bf 347} (1995), 1005--1012. 
   
\bibitem{Yokura-Kagoshima}
S. Yokura, {\it Homological Verdier--Radon functors on smooth varieties}, Sci. Rep. Kagoshima Univ., No. {\bf 45} (1996), 89--104.
 
\bibitem{Yokura-VRR}
 S. Yokura, {\it On a Verdier-type Riemann--Roch for Chern-Schwartz-
   MacPherson class},
Topology and its Appl. {\bf 94} (1999), 315--327.

\bibitem{Yokura-obt} 
S. Yokura, {\it Oriented bivariant theory, I}, 
International J. Math., Vol. {\bf 20}, No.10 (2009), 1305--1334

\bibitem{Yokura-obt2}
S. Yokura, {\it Oriented bivariant theory, II - Algebraic cobordism of $S$-schemes-}, International J. Math., {\bf 36}, No.6 (2019), 1950031-1--1950031-40.

\bibitem{Yokura-bicycle}
S. Yokura, {\it Cobordism bicycles of vector bundles}, to appear in New York J. Math., 2020.

\end{thebibliography}
\end{document}